\documentclass[11pt,a4paper]{article}
\usepackage{amssymb,amsfonts,amsmath,amsthm, mathtools,nccmath}
\usepackage[latin1]{inputenc}
\usepackage[english]{babel}
\usepackage[left=2.7cm,right=2.7cm,bottom=2.67cm,top=2.67cm]{geometry}
\usepackage{dsfont}
\usepackage{mathrsfs}
\usepackage{natbib}
\usepackage{color}
\usepackage{graphicx}
\usepackage{multirow}
\usepackage{multicol}
\usepackage{placeins}
\usepackage[normalem]{ulem}

\usepackage[implicit=false]{hyperref}
\allowdisplaybreaks
    
\theoremstyle{plain}
\newtheorem{thm}{Theorem}
\newtheorem{lem}{Lemma}
\newtheorem{corol}{Corollary}[thm]
\newtheorem{coroll}{Corollary}

\renewcommand{\qed}{\hfill \mbox{\raggedright \rule{.07in}{.1in}}}

\newcommand{\vertiii}[1]{{\left\vert\kern-0.25ex\left\vert\kern-0.25ex\left\vert #1 
    \right\vert\kern-0.25ex\right\vert\kern-0.25ex\right\vert}}

\def\R{\mathbb{R}} 
\def\N{\mathbb{N}} 
 
\def\I{\mathcal{I}} 

\DeclarePairedDelimiter\abs{\lvert}{\rvert}
\DeclarePairedDelimiter\norm{\lVert}{\rVert}

\DeclarePairedDelimiter\parent{(}{)}
\DeclarePairedDelimiter\colc{[}{]}
\DeclarePairedDelimiter\chave{\{}{\}}

\DeclareMathOperator*{\E}{E}

\DeclareMathOperator*{\supp}{supp}

\DeclareMathOperator*{\var}{Var}

\newcommand{\ase}{\mathrm{ASE}}

\long\def\sfootnote[#1]#2{\begingroup%
\def\thefootnote{\fnsymbol{footnote}}\footnote[#1]{#2}\endgroup}
\def\bfootnote{\xdef\@thefnmark{}\@footnotetext}

\begin{document}
\pagestyle{myheadings} 
\markboth{}{Matsuoka and Torrent}

\thispagestyle{empty}
{\centering
\Large{\bf Uniform convergence of kernel averages under fixed design with heterogeneous dependent data.} \vspace{.5cm}\\
\normalsize{{\bf 
Danilo H. Matsuoka${}^{\mathrm{a,}}$\sfootnote[1]{Corresponding author. This Version: \today},\let\thefootnote\relax\footnote{\hskip-.3cm$\phantom{s}^\mathrm{a}$Research Group of Applied Microeconomics - Department of Economics, Federal University of Rio Grande.} Hudson da Silva Torrent${}^\mathrm{b}$\let\thefootnote\relax\footnote{\hskip-.3cm$\phantom{s}^\mathrm{b}$Mathematics and Statistics Institute - Universidade Federal do Rio Grande do Sul.
}
 \\
\let\thefootnote\relax\footnote{E-mails: danilomatsuoka@gmail.com (Matsuoka);  hudsontorrent@gmail.com (Torrent)}
\vskip.3cm
}
}
}
\begin{abstract}
	We provide uniform convergence rates for kernel averages on $[0,1]$ under equally-spaced fixed design points of the form $x_{t,T}=t/T,\ t\in\{1,\dotsc, T\},\ T\in\mathbb{N}$. The rates of weak and strong uniform consistency are derived  under strong mixing and moment conditions and do not require stationarity. The analysis exploits the grid structure and thus complements existing random-design results such as those of \cite{hansen} and \cite{kristensen}, which rely on density-based conditioning arguments. The framework accommodates dependent triangular arrays and is particularly relevant for nonparametric methods applied to time series observed on deterministic grids. As an application, we derive uniform convergence rates for the local linear estimator in a nonparametric regression model with time-varying autoregressive errors. The theoretical results are illustrated through Monte Carlo experiments and an empirical application.
\vspace{.2cm}\\
\noindent \textbf{Keywords:}  nonparametric regression; asymptotic theory; time-varying parameters; local linear smoothing; strong mixing.\\
\noindent \textbf{JEL Classification:} C14, C22.\\
\noindent \textbf{MSC2020:} 62G05, 62G20.
\end{abstract}

\section{Introduction}\label{sec1}
    
	The uniform consistency of kernel-based estimators is fundamental for inference in nonparametric time series models with dependent data and has been widely investigated under various mixing conditions  \citep{bierens_uni,peligrad_uni,andrews_uni,masry_uni,nze_uni, fan_uni,hansen,kristensen,bosq,kong_uni,li_uni,hirukawa_uni}. In particular, \cite{hansen} established uniform convergence rates for stationary and strongly mixing data over expanding intervals, thereby accommodating kernel functions with both bounded and unbounded support. \cite{kristensen} extended these results to settings where the data may be heterogeneous and parameter dependent. The latter extension is especially important in semiparametric models whose nonparametric component depends on unknown parameters, as in partially linear and single-index models \citep[see][]{li,xia}, and in simulation-based estimation methods \citep[see][]{creel,kristensen_shin}. The former result, in turn, is useful in settings where data may be nonstationary yet strongly mixing, for example, in Markov-Chains that have not been initialized at their stationary distribution \citep{yu6,kim}.  A particularly direct and commonly encountered application of \cite{kristensen} is the local polynomial regression \citep[see][]{wand_jones} with strongly mixing and nonstationary errors.

       The analytical tools employed by \cite{kristensen} and \cite{hansen} are developed under a random design framework in which expectations are expressed through conditioning on the design variable $X_{i,T}$ and  integration with respect to its Lebesgue density  $f_{i,T}$. In our setting, however, the design points $x_{i,T}=i/T,\ i\in{1,\dotsc,T},\ T\in\mathbb{N},$ are deterministic. As a result, the density-based integral representations and conditional expectation arguments used in those previous studies are not directly available in this deterministic setting. Instead, our proofs proceed through deterministic uniform approximations of integrals by finite sums.  Such equally spaced fixed designs are standard in time series analysis, where observations are typically recorded on deterministic grids. They arise naturally in nonparametric time series regression \citep[among others]{robinson,hall_hart,machkouri,vogt}, in time-varying models \citep{dahlhaus,cai}, and when continuous-time processes are sampled at discrete time points \citep{bandi,kristensen2010}.
    
    	Although convergence rates are expected to be of similar order in fixed designs such as $x_{i,T}=i/T$, this setting falls outside the scope of \cite{kristensen} and \cite{hansen}, whose theorems are derived under random design  assumptions in which the design variables admit a density that is absolutely continuous with respect to the Lebesgue measure. Consequently, their results do not immediately extend to the deterministic fixed design setting  considered here  and require further arguments. 
        
        This paper establishes weak and strong uniform convergence rates for kernel averages under fixed design, building the analysis directly on the grid structure. The data are allowed to be strongly mixing, nonstationary, and dependent on a parameter $\gamma$ taking values in a parametric space $\Theta\subseteq\R^m$.  The kernel function is assumed to be compactly supported and Lipschitz.  The theoretical results are subsequently applied to a local linear regression model with time-varying autoregressive errors.
    
	The paper is organized as follows. Section \ref{sec2} develops the main theoretical results. Section \ref{sec3} illustrates their applicability in the context of nonparametric regression  with time-varying autoregressive errors, and further includes both a Monte Carlo investigation and a real-data example using Black Sea mean sea level anomalies.

	\section{General results for kernel averages}\label{sec2}
	This section develops uniform bounds for kernel averages of the form
	\begin{equation}\label{eq1_}
	\hat\Psi(x,\gamma)=T^{-1}\sum_{i=1}^T \epsilon_{i,T}(\gamma) K_h(i/T-x) \parent[\bigg]{\frac{i/T-x}{h}}^j,\quad x\in [0,1],\ \gamma\in\Theta,
	\end{equation}
     where $j\in\N$ is fixed and $\{\epsilon_{t,T}(\gamma):1\leq t\leq T, T >1\}$ denotes a triangular array of random variables defined on $(\Omega,\mathcal{F},P)$, depending on a parameter $\gamma\in\Theta\subseteq \R^m$. For a kernel function $K:\R\to\R$, we denote $K_h(u)\coloneqq K(u/h)/h$ where $h\coloneqq h_T$ is a positive sequence satisfying $h\to 0$ and $Th\to\infty$ as $T\to\infty$. Quantities of the form \eqref{eq1_} are fundamental to time series kernel regression, as they naturally arise in the expressions defining kernel estimators (see \cite{wand_jones,tsybakov}). When $j=0$ we recover the standard kernel average considered in \cite{hansen} and \cite{kristensen}. Including the factor $((i/T-x)/h)^j$ is a convenient generalization, since \eqref{eq1_} is exactly the quantity that appears in local polynomial estimators, making its application to  such settings immediate.

	For each $T\geq1$ and $\gamma\in\Theta$, the $\alpha$-\textit{mixing coefficients} of $\epsilon_{1,T}(\gamma),\dotsc, \epsilon_{T,T}(\gamma)$ are defined by
	\begin{equation*}
		\alpha_{\gamma,T}(j)=\sup_{1\leq k\leq T-j} \sup\{\abs{P(A\cap B)-P(A)P(B)}:B\in\mathcal{F}_{T,1}^k(\gamma),A\in\mathcal{F}_{T,k+j}^T(\gamma)\},
	\end{equation*}
	for any $0\leq j<T$, where $\mathcal{F}_{T,i}^k(\gamma)=\sigma(\epsilon_{l,T}(\gamma):i\leq l\leq k)$. By convention, we set $\alpha_{\gamma,T}(j)=1/4$ for $j\leq 0$ and $\alpha_{\gamma,T}(j)=0$ for $j\geq T$. This definition follows \cite{francq} and \cite{withers}. We say that $\{\epsilon_{i,T}(\gamma): 1\leq i \leq T, T\geq 1\}$ is $\alpha$-\textit{mixing} (or \textit{strongly mixing}) if the sequence
	$$\alpha_\gamma(j)=\sup_{T>j} \alpha_{\gamma,T}(j),\quad 0\leq j<\infty,$$
	satisfies $\alpha_\gamma(j)\to 0$ as $j\to\infty$.
    
	The following assumptions are made throughout this study:
	\begin{enumerate}
		\item[A.1][Strong Mixing] The triangular array $\{\epsilon_{i,T}(\gamma):1\leq i \leq T,T\geq 1\}$ is strongly mixing with mixing coefficients satisfying
		\begin{equation}\label{eq2_}
			\alpha_{\gamma,T}(i)\leq Ai^{-\beta},
		\end{equation}
		for some  finite constants $A>0$ and $\beta>2$ that do not depend on $i,\gamma,T$.
		\item[A.2][Kernel Function] The function $K:\R\to\R$ satisfies $|K(u)|\leq \bar K<\infty$ and $\int_{\R} \abs{K(u)}^s du\leq \bar \mu<\infty$ for some $s>2$. There exist constants $1\leq \Lambda_1,L_1<\infty$ such that $K(u)=0$ for $|u|>L_1$, and $|K(u)-K(u')|\leq \Lambda_1 |u-u'|$ for all $u,u'\in\R$.
        \item[A.3] [Parameter Dependence] For each $T\geq 1$ and $1\leq i\leq T$, there exist a nonnegative random function $\xi_{i,T}(\gamma)$, such that almost surely
        \begin{equation}\label{eqlipz}
            |\epsilon_{i,T}(\gamma')-\epsilon_{i,T}(\gamma)|\leq \xi_{i,T}(\gamma)\|\gamma'-\gamma\|,\qquad \gamma',\gamma\in\Theta: \|\gamma'-\gamma\|\leq h.
        \end{equation}
        Moreover, there exist finite constants $s>2, \lambda\geq 0$, and $\bar C_1, \bar C_2>0$ such that
        \begin{align}\label{eqda1}
            &\sup_{T\geq 1}\,\sup_{1\leq i\leq T}\E(| \epsilon_{i,T}(\gamma)|^s)\leq \bar C_1(1+\|\gamma\|^\lambda),\\\label{eqda2}
            &\sup_{T\geq 1}\,\sup_{1\leq i\leq T}\E(|\xi_{i,T}(\gamma)|^s)\leq \bar C_2(1+\|\gamma\|^\lambda).
        \end{align}
	\end{enumerate}
   
	Following \cite{kristensen} and \cite{hansen}, Assumption A.1 requires the triangular array to be arithmetically $\alpha$-mixing \citep[see Definition 10.2 of][]{ferraty}. The exponent $\beta $ quantifies the rate at which the mixing coefficients decay, with smaller values  corresponding to stronger dependence.

    Assumption A.2  sets standard regularity conditions on the kernel function, including boundedness and integrability of $K$. A.2 encompasses the class of  compactly supported Lipschitz kernels. This includes popular choices such as the Epanechnikov, Biweight, Triweight and Triangular kernels \citep[see Section 2.7 in][]{wand_jones}. 
    Note that, under A.2, both classes of integrals are bounded: $\int_\R |K(u)|^kdu$ for $k\in\{0,\dotsc,s\}$, and $\int_\R |u|^k |K(u)|du$, for $k\in\{0,\dotsc,j\}$.

    Condition \eqref{eqlipz} in Assumption A.3 requires that each mapping $\gamma\mapsto\epsilon_{i,T}(\gamma)$ be locally Lipschitz almost surely, with a random Lipschitz coefficient $\xi_{i,T}(\gamma)$. This assumption is weaker than Assumption A.2 in \cite{kristensen}, which imposes almost sure differentiability and therefore implies almost sure local Lipschitz continuity on $\Theta$. The parameter space $\Theta$ is allowed to be unbounded, and the bounds in \eqref{eqda1}-\eqref{eqda2} ensure that moments   of order up to $s$  remain finite and may grow  with $\|\gamma\|$ at most polynomially. This formulation is aligned with  the growth conditions imposed in Assumptions A.3-A.5 of \cite{kristensen}. If $\Theta$ is compact, the term $1+\|\gamma\|^\lambda$ in \eqref{eqda1}-\eqref{eqda2} is uniformly bounded for all $\lambda\geq0$. Hence, we may set $\lambda=0$ without loss of generality. If the data are parameter independent, then the parameter space is taken to be a trivial singleton, which implies  $m=0$, $\epsilon_{i,T}(\gamma)$ is constant in $\gamma$ so that $ \xi_{i,T}(\gamma)=0$, and the moment bounds in \eqref{eqda1}-\eqref{eqda2} likewise reduce to the case $\lambda=0$. Thus, A.3 collapses to the standard requirement that $\E(| \epsilon_{i,T}|^s)\leq \bar C_1$, uniformly in $i$ and $T$.

	From now on,  
    we use $C>0$ to denote a generic constant which may take different values at different occurrences and is independent of $T, x$, and $\gamma$. The notation ``$\overset{a}{\approx}$'' stands for asymptotic equivalence.

	\subsection{Uniform convergence in probability}\label{sec2.1}

	We now derive a uniform convergence rate in probability for the kernel average in \eqref{eq1_}, adapting Theorem 2 in \cite{hansen} and Theorem 1 in \cite{kristensen} to the fixed design triangular array setting.
	
	\begin{thm}\label{teo2}
		Assume that A.1$-$A.3 hold. Fix $c>0$ and suppose that $s>2$. Define  
        \begin{equation}\label{eq8_}
			\theta=\frac{\beta(s-2)-m(s-1)(1+2c)-2s+1}{\beta s+m(s-1)+1}.
		\end{equation}
       Let  
        $\Theta_T=\{\gamma\in\R^m: \|\gamma\|\leq d_T\}$ with $d_T=T^r$ and $r=\min\{c,(1-\theta)/(2\lambda)\}$.
        Suppose that 
        \begin{equation}\label{eq7_}
			\beta>\frac{m(s-1)(1+2c)+2s-1}{s-2},
		\end{equation}
		and that the bandwidth satisfies
		\begin{equation}\label{eq9_}
			\frac{\ln T}{T^\theta h}=o(1).
		\end{equation}
       If, in addition, the following bounds hold
       \begin{align}\label{eqextrab}
           \sup_{T\geq 1}\,\sup_{1\leq i\leq T}\E(\sup_{\gamma\in\Theta_T}| \epsilon_{i,T}(\gamma)|^s)&\leq  C(1+d_T^\lambda),\\\label{eqextrab2}
             \sup_{T\geq 1}\,\sup_{1\leq i\leq T}\E(\sup_{\gamma\in\Theta_T}| \xi_{i,T}(\gamma)|^s)&\leq  C(1+d_T^\lambda),
       \end{align}
       then 
        \begin{equation*}
            \sup_{\gamma\in \Theta_T}\sup_{x\in [0,1]} \abs{\hat\Psi(x,\gamma)-E\hat\Psi(x,\gamma)}=O_p\left(d_T^\lambda\sqrt{\frac{\ln T}{Th}}\right).
        \end{equation*}
	\end{thm}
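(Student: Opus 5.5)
The proof follows the truncation–discretization–exponential-inequality scheme of Theorem 2 in \cite{hansen} and Theorem 1 in \cite{kristensen}. The density-based variance computations are replaced by Riemann-sum bounds on the grid $i/T$.

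\emph{Step 1: truncation.} Set $a_T=d_T^\lambda\sqrt{\ln T/(Th)}$ and a threshold $\tau_T=a_T^{-1/(s-1)}$ (possibly inflated by powers of $d_T$). Split $\epsilon_{i,T}(\gamma)=\epsilon_{i,T}(\gamma)\mathbf 1\{\sup_{\Theta_T}|\epsilon_{i,T}|\le\tau_T\}+\text{remainder}$, and write $\hat\Psi=\hat\Psi_1+\hat\Psi_2$ accordingly. Since $K$ is bounded with support $[-L_1,L_1]$, $|u|^j|K(u)|\le \bar K L_1^j$. Hence, for every $x$,
\[
|\hat\Psi_2|\le C T^{-1}\sum_{i:|i/T-x|\le L_1h} h^{-1}\sup_{\Theta_T}|\epsilon_{i,T}|\,\mathbf 1\{\sup_{\Theta_T}|\epsilon_{i,T}|>\tau_T\}.
\]
Using \eqref{eqextrab}, $E\sup_\gamma|\hat\Psi_2-E\hat\Psi_2|\le C d_T^\lambda\tau_T^{1-s}$, since the number of grid points in the window is $O(Th)$ uniformly in $x$. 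By the choice of $\tau_T$ this is $O(a_T)$, and Markov's inequality handles it. To be uniform over $x$ I bound the sum over all $i$ weighted by $h^{-1}$ only within windows; the window-count bound $\#\{i:|i/T-x|\le L_1h\}\le 2L_1Th+1$ gives the uniformity directly.

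\emph{Step 2: discretization.} Cover $[0,1]$ by $N_1\asymp (a_Th)^{-1}\cdot h^{-1}$ points $x_k$ and $\Theta_T$ by $N_2\asymp (d_T/(a_Th))^{m}$ points $\gamma_l$, choosing spacings $\delta_x\asymp a_T h^2$ and $\delta_\gamma\asymp a_Th\le h$. For $|x-x_k|\le\delta_x$, the map $u\mapsto K(u)u^j$ is Lipschitz on its compact support, so
\[
|K_h(i/T-x)((i/T-x)/h)^j-K_h(i/T-x_k)((i/T-x_k)/h)^j|\le C\delta_x h^{-2}\,\mathbf 1\{|i/T-x_k|\le 2L_1h\}.
\]
Hence $|\hat\Psi_1(x,\gamma)-\hat\Psi_1(x_k,\gamma)|\le C a_T\,T^{-1}\sum \tau_T$-bounded terms. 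This is $O_p(a_T)$ after centering, using the $L^1$ bound from \eqref{eqextrab}. For $\gamma$, \eqref{eqlipz} with $\|\gamma-\gamma_l\|\le\delta_\gamma\le h$ gives a difference of at most $\delta_\gamma T^{-1}\sum|K_h\cdots|\sup\xi_{i,T}$, whose expectation is $O(\delta_\gamma h^{-1}\cdot h\, d_T^{\lambda/s})$ by \eqref{eqextrab2}; this is $O(a_T)$. The truncation indicator is taken on $\sup_{\Theta_T}$, so it does not depend on $\gamma$ and the Lipschitz step survives truncation.

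\emph{Step 3: exponential inequality.} At each fixed $(x_k,\gamma_l)$ the summands $Z_i=\epsilon_{i,T}\mathbf 1\{\cdot\}K_h(\cdot)(\cdot)^j$ are bounded by $C\tau_T/h$ and $\alpha$-mixing by A.1. The main ingredient is a variance bound for any block of length $q$,
\[
\var\Big(\sum_{i\in B}Z_i\Big)\le C q\, d_T^{2\lambda/s}/h.
\]
I obtain it by Davydov's inequality with exponent $s$ together with $\sum_j\alpha(j)^{1-2/s}<\infty$ (which needs $\beta>s/(s-2)$, implied by \eqref{eq7_}). The factor $1/h$ comes from the grid fact $T^{-1}\sum_i |K_h(i/T-x)|^{2}((i/T-x)/h)^{2j}\le h^{-1}\big(\int|K|^2|u|^{2j}du+O(1/(Th))\big)$ uniformly in $x$. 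This is the deterministic Riemann-sum approximation replacing density integration, and it is valid because $K$ is Lipschitz and compactly supported. Then I apply the Liebscher/Fuk--Nagaev type inequality used by \cite{hansen} (Theorem 2.1 of Liebscher) with block length $q\asymp (a_T\tau_T)^{-1}$:
\[
P(|\hat\Psi_1-E\hat\Psi_1|>\eta a_T)\le 4\exp\big(-c\eta^2\ln T\big)+ C\,T q^{-1}\alpha(q).
\]

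\emph{Step 4: union bound.} Multiplying by $N_1N_2$, the exponential term vanishes for large $\eta$ because $N_1N_2$ is polynomial in $T$. The mixing term $N_1N_2\,T q^{-1-\beta}$ is where the exponent $\theta$ in \eqref{eq8_}, condition \eqref{eq7_}, the bandwidth condition \eqref{eq9_}, and $r\le c$, $r\le(1-\theta)/(2\lambda)$ are all used. Substituting $d_T=T^r$, $\tau_T$ and $q$ in terms of $T^\theta h$ should reduce this term to a negative power of $T^\theta h/\ln T$, which tends to zero by \eqref{eq9_}.

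I expect this exponent bookkeeping to be the main obstacle. Matching exactly the stated $\theta$ may require tuning the truncation threshold, for example to $\tau_T=(a_T d_T^{-\lambda})^{-1/(s-1)}$.
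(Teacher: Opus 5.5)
Your outline (truncate, discretize, apply Liebscher--Rio, take a union bound) is the paper's. However, two steps fail as written in the fixed design, and these are exactly the two places where the paper departs from \cite{hansen}. Below, $a_T=\sqrt{\ln T/(Th)}$ as in the paper, without your factor $d_T^\lambda$.

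\emph{The block-variance bound in Step 3 is false.} Take a block $B$ of $q$ consecutive indices inside the kernel window $\{i:|i/T-x|\le L_1h\}$. There the weights $w_i=K_h(i/T-x)((i/T-x)/h)^j$ can all be of exact order $1/h$; for example, take $j=0$, i.i.d.\ unit-variance errors and $B$ centred at $Tx$. Then $\var(\sum_{i\in B}Z_i)\asymp q/h^2$, not $q/h$. The Riemann-sum fact you quote controls $\sum_{i=1}^T w_i^2\asymp T/h$, which is an average over all $T$ indices. It transfers to blocks only when $q\gtrsim Th$. But the Liebscher condition $4b_Tq<\epsilon$, with $b_T\asymp\tau_T/h$ and $\epsilon\asymp d_T^\lambda a_TT$, forces $q=o(Th)$. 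This is what Lemma \ref{teo1} records: in the paper's normalization (with $K$ instead of $K_h$) the block variance is $O(m_T)$, not Hansen's $O(m_Th)$. With the correct order, Lemma \ref{l1} applied to all $T$ summands gives an exponent of order $\eta^2 h\ln T$ when $\lambda=0$. That cannot beat the polynomial number of grid points. The paper's remedy (its Technical Remark (ii)) is to apply Lemma \ref{l1} only to the $n_T(x)\le 2L_1Th+1$ indices of $J_x(L_1)$. Then $\sigma^2 n_T(x)/q\asymp T/h$ in your normalization, which is the right order. The mixing term also becomes $\alpha(q)n_T(x)/q\lesssim Th\,q^{-1-\beta}$ rather than $Tq^{-1-\beta}$.

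\emph{The uniformity claim in Step 1 does not follow from the window count.} The window count bounds $\E|\hat\Psi_2(x,\cdot)|$ for each fixed $x$, but Markov's inequality needs $\E\sup_x$. Without further structure, the supremum over $x$ of window sums of the nonnegative variables $Y_{i,T}I(Y_{i,T}>\tau_T)$, with $Y_{i,T}=\sup_{\gamma\in\Theta_T}|\epsilon_{i,T}(\gamma)|$, can only be bounded by the full sum over $i\le T$. That costs a factor $1/h$. With a Hansen-type threshold such as your retuned $\tau_T=a_T^{-1/(s-1)}$, you obtain only $O_p(d_T^\lambda a_T/h)$. The paper (Technical Remark (i)) enlarges the threshold to $\tau_T=(a_Th)^{-1/(s-1)}$, so that $\tau_T^{1-s}/h=a_T$.

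That threshold, together with the window-restricted mixing term with $m_T=(a_T\tau_T)^{-1}$ and a cover of size $N\asymp d_T^m/(a_Th)^{1+m}$, makes the $T$-exponent of the mixing part of the union bound vanish exactly at the stated $\theta$. When $r=c$, that part therefore tends to zero only through a power of $\ln T$. This leaves no room for your shortcuts.
\begin{itemize}
\item Your $x$-spacing $a_Th^2$ inflates $N$ by $1/h$, which under \eqref{eq9_} can be nearly as large as $T^\theta/\ln T$. You must use spacing $a_Th$. You then have to control the window averages of the truncated $|\epsilon_{i,T}|$ uniformly over grid points by a second exponential bound (the paper's $\tilde\Psi_k$ terms), rather than by the crude $L^1$ bound.
\item Truncating on $\sup_{\gamma\in\Theta_T}|\epsilon_{i,T}(\gamma)|$ makes $Z_i$ a function of the whole path $\gamma\mapsto\epsilon_{i,T}(\gamma)$. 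Its mixing is not controlled by A.1, which is stated for one $\gamma$ at a time. The paper truncates at $|\epsilon_{i,T}(\gamma)|$ instead, and pays with the extra term $2Y_{i,T}I(Y_{i,T}>\tau_T)$ in the $\gamma$-Lipschitz step (its $U_{l,T}$).
\end{itemize}
Your Markov treatment of the $\gamma$-discretization via $a_T T^{-1}\sum_i\sup_\gamma\xi_{i,T}(\gamma)$ does survive these changes. It is simpler than the paper's truncation-plus-Liebscher handling of the $\xi$-terms.
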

	
	Theorem \ref{teo2} establishes a rate of convergence in probability that is uniform in $x\in[0,1]$ and $\gamma\in\Theta_T$. Since the parameter space $\Theta$ may be unbounded, uniformity must be restricted to expanding subsets $\Theta_T$, whose growth rate is determined by $d_T$. In particular, if $\Theta$ is compact, then $\Theta\subseteq \Theta_T$ for all sufficiently large $T$, so uniformity  holds over the entire parameter space $\Theta$. Moreover, if $\Theta$ is compact or data are parameter independent, Assumption A.3 permits any $\lambda\geq 0$. In this case, the optimal choice is $\lambda=0$, which recovers the rate $\sqrt{\ln T/(Th)}$ obtained by \cite{hansen}. The rate $d_T=T^r$ with $r=\min\{c,(1-\theta)/(2\lambda)\}$ guarantees $d_T^\lambda\sqrt{\ln T/(Th)}=o(1)$, while  $c>0$ plays a role when $\lambda=0$. Indeed,  $c>0$  prevents  $d_T$ and the expressions in \eqref{eq8_}-\eqref{eq7_} from becoming undefined. To see this, note that for $r=(1-\theta)/(2\lambda)$, conditions \eqref{eq8_} and \eqref{eq7_} take the forms 
\begin{equation}\label{eqalternative1}
        \theta=\frac{\beta(s-2)-m(s-1)(1+1/\lambda)-2s+1}{\beta s+m(s-1)(1-1/\lambda)+1},
    \end{equation}
     and 
\begin{equation}\label{eqalternative2}
       \beta>\frac{m(s-1)(1+1/\lambda)+2s-1}{s-2},
    \end{equation}
    respectively. These expressions are well defined  if, and only if, $\lambda>0$.

The lower bound in \eqref{eq7_} links the strength of dependence to the dimension $m$ of the parametric space, the constant $c$ governing the expansion rate of $d_T$ and the moment order $s$ specified in Assumption A.3. Since this bound increases with $m$ and $c$ and decreases with $s$, higher-dimensional parameter spaces, faster expansion of $\Theta_T$ or weaker moment conditions (smaller $s$) impose a stronger restriction on $\beta$, requiring a faster decay of mixing coefficients (equivalently, weaker dependence) as specified in \eqref{eq2_}. When Assumption A.3 holds for all $s>0$ and we let $s\to\infty$, the lower bound in \eqref{eq7_} decreases monotonically to $2+m(1+2c)$. Since $m\geq 0$ and $c>0$, the limiting lower bound exceeds 2, so the condition $\beta>2$ is necessary.

By conditions \eqref{eq8_}-\eqref{eq7_}, $\theta\in(0,1)$. In particular, letting $\beta\to\infty$ (e.g., under geometrically $\alpha$-mixing dependence) and $m=0$, the parameter $\theta$ increases monotonically to $1-2/s$, a value strictly below 1 for all $s>2$. Condition \eqref{eq9_}, in turn, requires that the bandwidth $h$ satisfies $T^\theta h/\ln T\to\infty$, so smaller values of $\theta$ imply stronger conditions on $h$. As $\theta\in(0,1)$, condition \eqref{eq9_} strengthens the conventional assumption that $h=o(1)$ and $Th\to\infty$. 

 Although the constant $c>0$ may be chosen arbitrarily, when $\lambda>0$, selecting $c>(1-\theta)/(2\lambda)$ serves no purpose, as it would only tighten the restrictions in \eqref{eq7_}-\eqref{eq9_} without enlarging $\Theta_T$. Hence, one may assume $c\leq (1-\theta)/(2\lambda)$ whenever $\lambda>0$. In particular, for the choice $c=(1-\theta)/(2\lambda)$ conditions \eqref{eq8_} and \eqref{eq7_} reduce to \eqref{eqalternative1} and \eqref{eqalternative2}, respectively.
 
    The control of the kernel average is based on a truncation decomposition. This follows from the identity $\epsilon_{i,T}=\epsilon_{i,T}I(|\epsilon_{i,T}|>\tau_T)+\epsilon_{i,T}I(|\epsilon_{i,T}|\leq\tau_T)$, where $\tau_T$ denotes the truncation level.  A suitable choice of $\tau_T$, together with the uniform moment conditions \eqref{eqextrab}-\eqref{eqextrab2}, ensures uniform control of the non-truncated components via Markov's inequality, while  the truncated components are controlled through the exponential inequality in Lemma \ref{l1}. For the deterministic grid $x_{i,T}=i/T$, the variance term entering this inequality is of a different asymptotic order in the present framework than in the random design case. Consequently, uniform bounds on the number of indices for which the kernel weight is nonzero become essential, making compact support of the kernel particularly convenient.

	\subsection{Almost sure uniform convergence}\label{sec2.2}
    The almost sure counterpart of Theorem~\ref{teo2}  requires the application of the Borel-Cantelli lemma and therefore demands stronger moments bounds and faster decay of the $\alpha$-mixing coefficients (i.e., stronger conditions on $s$ and $\beta$).  In contrast to Theorem 3 of \cite{hansen}, strict stationarity is not assumed.

	\begin{thm}\label{teo3}
		Assume that  A.1-A.3 hold. Fix $c>0$ and suppose that $s>4$. Define  
        \begin{equation}\label{eq11_}
				\theta=\frac{(\beta+1)(s-4)-(s-1)(5+m(1+2c))}{(\beta+1)+(m+1)(s-1)}.
		\end{equation}
        Let  $\Theta_T=\{\gamma\in\R^m: \|\gamma\|\leq d_T\}$ with $d_T=T^r$ and $r=\min\{c,(1-\theta)/(2\lambda)\}$.
        Suppose that 
		\begin{equation}\label{eq10_}
			\beta>\frac{(s-1)(5+m(1+2c))-(s-4)}{s-4}
		\end{equation}
		and  the bandwidth satisfies
		\begin{equation}\label{eqcond}
			\frac{\phi_T}{T^\theta h}=O(1),
		\end{equation}
		where $\phi_T=\ln T(\ln\ln T)^4$. If, in addition, the following bounds hold
        \begin{align}\label{eqfull}
           \sup_{T\geq 1}\sup_{1\leq i\leq T} \E\bigg(\sup_{\gamma\in\Theta_T} |\epsilon_{i,T}(\gamma)|^s\bigg)&\leq C(1+d_T^\lambda),\\\label{eqfull2}
             \sup_{T\geq 1}\,\sup_{1\leq i\leq T}\E\bigg(\sup_{\gamma\in\Theta_T}| \xi_{i,T}(\gamma)|^s\bigg)&\leq  C(1+d_T^\lambda),
        \end{align}  
		then, $$\sup_{\gamma\in\Theta_T}\sup_{x\in [0,1]} \abs{\hat\Psi(x,\gamma)-E\hat\Psi(x,\gamma)}=o_{a.s.}\left(d_T^\lambda\sqrt{\frac{\ln T}{Th}}\right).$$ 
	\end{thm}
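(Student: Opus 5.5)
We follow the structure of the proof of Theorem~\ref{teo2}, strengthening the probabilistic step so that the failure probabilities are summable in $T$ and Borel--Cantelli applies, in the spirit of Theorem 3 of \cite{hansen}. Write $a_T=d_T^\lambda\sqrt{\ln T/(Th)}$. We must show that for every $\eta>0$, almost surely
\begin{equation*}
\sup_{\gamma\in\Theta_T}\sup_{x\in[0,1]}|\hat\Psi(x,\gamma)-E\hat\Psi(x,\gamma)|\leq \eta a_T
\end{equation*}
for all large $T$. Dividing by $d_T^\lambda$ we may take $a_T=\sqrt{\ln T/(Th)}$ after normalizing the moments. The proof has three parts: truncation, discretization, and an exponential bound combined with a union bound.

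\textbf{Truncation.} Set $\tau_T=(T^{\theta'}\phi_T)^{1/s}$-type levels; concretely, choose $\tau_T$ so that $\tau_T a_T^{-1}$ grows polynomially and $\sum_T T\,\tau_T^{-s}(1+d_T^\lambda)<\infty$. Split
\begin{equation*}
\epsilon_{i,T}=\epsilon_{i,T}I(|\epsilon_{i,T}|\le\tau_T)+\epsilon_{i,T}I(|\epsilon_{i,T}|>\tau_T).
\end{equation*}
The tail part is handled along a geometric subsequence $T_k=2^k$ (or directly) using \eqref{eqfull}. For all $\gamma\in\Theta_T$ and $i\le T$, the event $\{\sup_\gamma|\epsilon_{i,T}(\gamma)|>\tau_T\}$ has probability at most $C(1+d_T^\lambda)\tau_T^{-s}$. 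Hence the probability that any tail indicator is active is bounded by $CT(1+d_T^\lambda)\tau_T^{-s}$, and we need this to be summable. Since the array is triangular, the usual monotone trick for sequences is unavailable; we therefore insist on summability over all $T$, which is what forces $s>4$ and the $(\ln\ln T)^4$ factor in $\phi_T$. The expectation of the tail part is bounded deterministically by $C\tau_T^{1-s}(1+d_T^\lambda)\sup_x T^{-1}\sum_i|K_h(i/T-x)|\cdots\le C\tau_T^{1-s}d_T^\lambda=o(a_T d_T^\lambda)$. Here we use the Riemann-sum bound $T^{-1}\sum_i |K_h(i/T-x)||(i/T-x)/h|^j\le C$ uniformly in $x$, which follows from compact support and boundedness of $K$ because at most $2L_1Th+1$ indices are nonzero.

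\textbf{Discretization.} Cover $[0,1]\times\Theta_T$ by a grid with $N_T\asymp (a_T^{-1}h^{-2})^{1+m}d_T^m$ points, with spacing $a_T h^2$ in $x$ and $a_T h$ in $\gamma$ (the latter $\le h$, so \eqref{eqlipz} applies). By the Lipschitz property of $K$ in A.2, the bound $|u^jK(u)-u'^jK(u')|\le C|u-u'|$ on $[-L_1,L_1]$, and \eqref{eqlipz} with \eqref{eqfull2}, the oscillation over a cell is at most $C a_T\, T^{-1}\sum_i(\tau_T+\xi_{i,T})$. The average $T^{-1}\sum_i\sup_\gamma\xi_{i,T}(\gamma)$ is $O_{a.s.}(d_T^{\lambda/s})$; this follows from a truncation of $\xi$ together with Borel--Cantelli and the same summability argument. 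This makes the discretization error $o_{a.s.}(a_Td_T^\lambda)$ once constants are adjusted.

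\textbf{Exponential bound, the main obstacle.} At each grid point, apply the exponential inequality of Lemma~\ref{l1} (Fuk--Nagaev/Liebscher type for bounded $\alpha$-mixing arrays) to the centered truncated summands $Z_i=\epsilon_{i,T}I(\cdot)K_h(\cdot)(\cdot)^j$. These satisfy $|Z_i|\le C\tau_T/h$, and the block variance bound for a block of length $q$ is $\le Cq\,h^{-1}$, using only the $O(Th)$ nonzero indices and a covariance inequality under A.1. Choose $q\asymp (\tau_T a_T)^{-1}$. The inequality then gives
\begin{equation*}
P(|T^{-1}\textstyle\sum_i Z_i|>\eta a_T)\le 4\exp(-c\eta^2\ln T)+CT q^{-1}\alpha(q),
\end{equation*}
and the union bound requires
\begin{equation*}
\sum_T N_T\bigl(T^{-c\eta^2}+Tq^{-1}\alpha(q)\bigr)<\infty.
\end{equation*}
The first sum is handled by taking $\eta$ large. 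The second, with $\alpha(q)\le Aq^{-\beta}$, $q\asymp(\tau_Ta_T)^{-1}$ and the chosen $\tau_T$, reduces to a power of $T$ times $\phi_T$-factors. Demanding the exponent be $<-1$ is precisely condition \eqref{eq11_}--\eqref{eq10_} together with \eqref{eqcond}, where $O(1)$ suffices because the $(\ln\ln T)^4$ factor supplies the extra summability. Balancing $\tau_T$ between the truncation summability and this mixing sum is the delicate calculation; we will carry it out with $\tau_T=(T\phi_T)^{1/s}(\text{power})$ and verify the exponents. Finally, combine the $O$-type almost-sure bound with arbitrary $\eta$ via the slack in $\phi_T$ to upgrade it to $o_{a.s.}$.
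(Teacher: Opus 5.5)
Your outline has the same architecture as the paper's proof: truncate, discretize, apply Lemma~\ref{l1} at grid points, take a union bound, then use Borel--Cantelli. The trouble is that the step carrying the content of the theorem is only announced, namely choosing $\tau_T$ and verifying summability under \eqref{eq11_}--\eqref{eqcond}. With your choices it does not come out as you claim. Your ``no exceedance'' treatment of the tail (Borel--Cantelli on $\{\max_i\sup_\gamma|\epsilon_{i,T}(\gamma)|>\tau_T\}$) is legitimate. However, it needs $\sum_T T(1+d_T^\lambda)\tau_T^{-s}<\infty$, i.e.\ $\tau_T^s\gtrsim T^2 d_T^\lambda$. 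The paper only needs $\sum_T \E\sup_{x,\gamma}|R_{1,T}|/(d_T^\lambda a_T)<\infty$, and takes the smaller level $\tau_T=(T^3\phi_T/h)^{1/(2(s-1))}$, tuned so that $\tau_T^{1-s}/(a_Th)=(T\ln T(\ln\ln T)^2)^{-1}$. A larger $\tau_T$ shortens the blocks $q=(a_T\tau_T)^{-1}$. Combined with the extra factor $h^{-1}$ in your $N_T$ (from the spacing $a_Th^2$), the mixing sum then forces a strictly stronger bandwidth restriction; for $m=\lambda=0$ it is roughly $\theta<[(\beta+1)(s-4)-5s]/[(\beta+1)s+3s]$.

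No correct computation returns \eqref{eq11_} ``precisely'' in any case. First, $q\to\infty$ needs $a_T\tau_T\to0$, i.e.\ $\theta<1-4/s$. Second, the $h$-exponent of $(a_T\tau_T)^{1+\beta}$ is $-s(1+\beta)/(2(s-1))$, so the natural denominator is $(\beta+1)s+(m+1)(s-1)$. This matches the analogous denominator in Theorem~\ref{teo2} and the paper's remark that $\theta\uparrow 1-4/s$. As displayed, \eqref{eq11_} omits this factor $s$ and tends to $s-4$ as $\beta\to\infty$.

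Your block variance bound $Cqh^{-1}$ is also the random-design order. On the grid, a block inside the kernel window has variance of order $qh^{-2}$ in your normalization. So Lemma~\ref{l1} must be applied to the $n_T(x)\le 4L_1Th$ active indices only (Technical Remark (ii)); otherwise the variance term swamps the exponent.

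Two further steps fail as written. The first is the discretization. Your bound $Ca_TT^{-1}\sum_i(\tau_T+\xi_{i,T})$ contains $Ca_T\tau_T$, or at best $Ca_Th\tau_T$ after restricting to active indices. This is not $o(a_T)$, because $\tau_T$ grows polynomially while \eqref{eqcond} only bounds $h$ from below. You must keep $|\epsilon^*_{i,T}|$ rather than $\tau_T$ and control the resulting local averages uniformly. The paper does this through the dominating kernel of Lemma~\ref{l3}, $|K(u)-K(u')|\le\delta K^*(u)$. This bounds the oscillation by $a_T\tilde\Psi_k(x_l,\gamma)$ with $\E\tilde\Psi_k=O(d_T^{\lambda/s})$. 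The paper then treats $\tilde\Psi_k-\E\tilde\Psi_k$, the truncated $\xi$-averages and the mismatch term $U_{l,T}$ by the same exponential inequality and Markov bounds on the grid.

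The second is the closing upgrade from $O_{a.s.}$ to $o_{a.s.}$ ``with arbitrary $\eta$ via the slack in $\phi_T$'', which cannot work when $\lambda=0$. At threshold $\eta a_T$ the exponential term is $T^{-c\eta^2}$, and $\phi_T$ only contributes $\ln\ln T$ factors. These cannot offset polynomially many grid points when $\eta$ is small. When $\lambda>0$, the upgrade comes from $d_T^\lambda\to\infty$ in the exponent; this is how the paper disposes of $S_1$ ``for any $M>0$''.

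When $\lambda=0$, the $o_{a.s.}$ conclusion is in fact false. Take i.i.d.\ $N(0,1)$ errors with no parameter and $h=T^{-1/4}$, which is admissible for large $s$ and $\beta$. The values of $\hat\Psi$ at $\asymp 1/h$ points with disjoint kernel windows are independent with variance $\asymp (Th)^{-1}$. Their maximum is therefore eventually at least $c\sqrt{\ln T/(Th)}$ almost surely. In that case your argument, like the paper's, can give at most $O_{a.s.}$; the paper's ``for all $C_0>0$'' for $R_{2,T}$ actually relies on taking $M$ large when $\lambda=0$.
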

	
	To begin with, note that Theorem \ref{teo3} is established under the stricter requirement $s>4$, in contrast to the milder condition $s>2$ used in Theorem \ref{teo2}. Moreover, for $s>4$, the constraints in \eqref{eq11_}-\eqref{eqcond} imposed in Theorem \ref{teo3} are strictly stronger than those in \eqref{eq8_}-\eqref{eq9_} appearing in Theorem \ref{teo2}.  The underlying reason is that  a larger truncation level $\tau_T$ is needed to guarantee the summability of probabilities associated with the non-truncated component. This, in turn, forces stronger restrictions  to control the truncated component via Liebscher-Rio's exponential inequality (Lemma \ref{l1}). This reflects the standard trade-off whereby almost sure convergence involves stronger conditions than convergence in probability.
    
    As in \eqref{eq7_}, the lower bound in \eqref{eq10_} increases with $m$ and $c$, and decreases with $s$. Letting $s\to\infty$, this bound decreases monotonically to $4+m(1+2c)$, implying in particular that  $\beta>4$ is necessary. Under conditions \eqref{eq11_}-\eqref{eq10_}, $\theta$ lies in $(0,1)$. Moreover, when letting $\beta\to\infty$ and setting $m=0$, $\theta$ increases monotonically to $1-4/s$, which remains strictly below 1 since $s>4$. 
    
	\section{Application to a nonparametric regression model}\label{sec3}
	
	Let $Y_{t,T}\in \R$ satisfy, for each $T\geq 1$ and $t\in\{1,\dotsc,T\}$,
	\begin{align}\label{eqap1}
		Y_{t,T}&=g(t/T)+V_{t,T},\\\label{eqap2}
        V_{t,T}&=\phi(t/T)V_{t-1,T}+e_{t,T},
	\end{align}
	where $g(\cdot)$ and $\phi(\cdot)$ are unknown smooth functions on $[0,1]$,  and  $\{{e_{t,T}}\}_{t=1}^T$ are i.i.d. random variables independent of  $V_{0,T}$ for each $T\geq 1$, satisfying $\E(e_{t,T})=0$ and $\E(|e_{t,T}|^s)<C$ for some $s>2$.  Since $V_{t-1,T}$ is measurable with respect to $\sigma(V_{0,T},e_{1,T},\cdots,e_{t-1,T})$, it follows that $\E(e_{t,T}V_{t-1,T})=0$ for all $t\geq 2$ and $T\geq 1$. We also assume $\E(V_{0,T})=0$ and $\E(V_{0,T}^2)<\infty$, for model identification and stability of the process $\{V_{t,T}\}$, respectively. As shown by \cite{kristensen} and \cite{orbe}, under mild conditions $\{V_{t,T}: 1\leq t\leq T, T\geq 1\}$ is $\alpha$-mixing with geometrically mixing rate
of decay \citep[see Definition 10.2 of][]{ferraty}, which implies Assumption A.1\eqref{eq2_} for any $\beta>0$. Since $g(t/T)$ is deterministic, the $\alpha$-mixing coefficients of $\{Y_{t,T}\}$ coincide with those of $\{V_{t,T}\}$, and hence Assumption A.1 holds for $\{Y_{t,T}\}$ whenever it holds for $\{V_{t,T}\}$.

Model \eqref{eqap1}-\eqref{eqap2} can be rewritten as
\begin{equation}\label{eqap3}
    Y_{t,T}=g(t/T)+\phi(t/T)\left(Y_{t-1,T}-g((t-1)/T)\right)+e_{t,T},
\end{equation}
 which shows that $Y_{t,T}$ fluctuates randomly around a deterministic trend $g(t/T)$, reverting toward it at a rate governed by the coefficient $\phi(t/T)$.  

We adopt a two-step semiparametric procedure to estimate  $g(\cdot)$ and $\phi(\cdot)$. 

\emph{Step 1}. We estimate $g$,  using the  \textit{local linear estimator} defined by 
\begin{equation}\label{eqloc}
    \hat g(x)= e^\intercal_1S_{T,x}^{-1}D_{T,x},\qquad \forall x\in[0,1],
\end{equation}
where $e_1=(1,0)^\intercal$ and
	\begin{align}  \label{eq13_}
		S_{T,x}&=\frac{1}{T}\left[
		\begin{array}{cc}
			\sum_{t=1}^T K_h(x_t-x) & \sum_{t=1}^T K_h(x_t-x)(x_t-x)/h  \\ 
			\sum_{t=1}^T K_h(x_t-x)(x_t-x)/h  &  \sum_{t=1}^T K_h(x_t-x)((x_t-x)/h)^2  
		\end{array}
		\right],\\\label{eq14_}
		D_{T,x}&=\frac{1}{T}\left[
		\begin{array}{c}
			\sum_{t=1}^T Y_{t,T} K_h(x_t-x)  \\ 
			\sum_{t=1}^T Y_{t,T} K_h(x_t-x)(x_t-x)/h 
		\end{array}
		\right],
	\end{align}
	where $x_t\coloneqq t/T$. Straightforward algebra shows that $\hat g$ is linear in $Y_{t,T}$:
\begin{equation}\label{eqpesos}
\hat g(x)=\sum_{t=1}^T W_{t,T}(x) Y_{t,T} ,
\end{equation}
where $W_{t,T}(x)=T^{-1}e^\intercal_1 S_{T,x}^{-1}X\parent[\Big]{\tfrac{t/T-x}{h}}K_h(t/T-x)$ for $X(u)=(1,u)^\intercal$. 

    \emph{Step 2}. Given the estimate $\hat g$, compute the residuals $\hat V_{t,T}=Y_{t,T}-\hat g(t/T)$. We then estimate $\phi(\cdot)$ by the \textit{local constant estimator} 
    \begin{equation}\label{eqnada}
        \hat \phi(x)=\frac{\hat \Psi_1(x)}{\hat \Psi_2(x)}\coloneqq  \frac{1/T\sum_{t=2}^T G_v(x_t-x)\hat V_{t,T} \hat  V_{t-1,T}}{1/T\sum_{t=2}^T G_v(x_t-x)\hat  V_{t-1,T}^2},\qquad \forall x\in[0,1],
    \end{equation}
where $G_v(u)\coloneqq G(u/v)/v$, $G:\R\to\R$ is a kernel function and $v$ is a bandwidth satisfying $v=o(1)$, $Tv\to\infty$, following the formulation in Section 3 of \cite{kristensen}.

    The convergence rates of estimators \eqref{eqloc} and \eqref{eqnada} are obtained under additional assumptions:
    \begin{enumerate}
    \item[A.4][Additional Kernel Regularities] The function $K:\R\to\R$ is nonnegative and  symmetric, satisfying $\mu(\{u\in G_x:K(u)>0\})>0$ where $\mu$ is the Lebesgue measure and 
        \begin{equation*}
		G_x=\begin{cases}
	[-1,0],& \text{if } x= 1\\
	[-1,1],          & \text{if } x\in(0,1)\\
	[0,1], & \text{if } x=0
\end{cases}.
\end{equation*}

		\item[A.5][Smoothness Conditions] The functions $g(\cdot)$ and $\phi(\cdot)$ are twice continuously differentiable on $[0,1]$.

    \item[A.6][Autoregressive Part] Model \eqref{eqap2} satisfies the following conditions:
\begin{itemize}
    \item[(i)] There exists  $0<\bar\phi<1$ such that $\max_{u\in[0,1]}|\phi(u)|\leq \bar\phi$;
    \item[(ii)] There exists $s>2$ such that $\sup_{T\geq 1}\sup_{1\leq t\leq T} \E\big(|e_{t,T}|^{s}\big)\leq C$;
    \item[(iii)] For all $1\leq t\leq T$ and all $T\geq 1$, the error $e_{t,T}$ has unit variance and  density $f_e(\cdot)$ satisfying $\int_{-\infty}^\infty |f_e(u)-f_e(u+a)|du\leq C|a|$.
\end{itemize} 
	\end{enumerate}
Assumption A.4  is satisfied by most commonly used compactly supported kernels, such as the Epanechnikov, uniform, triangular, triweight, and cosine kernels. Assumption A.5 is standard in nonparametric kernel regression, and is particularly useful for giving uniform bounds to local linear weights.  According to Proposition 1 of \cite{orbe}, Assumption A.6 ensures that the array $\{V_{t,T}\}$ satisfies Assumption A.1.

The following theorem establishes the uniform convergence rates of $\hat g$ and $\hat \phi$. To avoid boundary effects, the rate for $\hat \phi$ is stated over an interior subset of $[0,1]$. Without loss of generality, we normalize the kernel support to $L_1=1$. The general case $L_1>0$  follows  by a straightforward rescaling of the kernel argument and does not affect any of the asymptotic rates.

    \begin{thm}\label{teo4}
		Let $K$ and $G$ be kernel functions satisfying Assumptions A.2 and A.4. Suppose that Assumptions A.3, A.5 and A.6 hold. Let $b_T=o(1)$ be any sequence such that $b_T/v\to\infty$ and define $\mathcal I_T\coloneqq[b_T,1-b_T]$.  If the bandwidths $h$ and $v$ satisfy \eqref{eq9_} in Theorem \ref{teo2} with $v\overset{a}{\approx} c_v h$ for some $c_v>0$, then
		\begin{equation*}
			\sup_{x\in[0,1]}\abs{\hat g(x)-g(x)}=O(h^2)+O_p\bigg(\sqrt{\frac{\ln(T)}{Th}}\bigg),
		\end{equation*}
        and
        \begin{equation*}
			\sup_{x\in\I_T}\abs{\hat \phi(x)-\phi(x)}=O_p\bigg(h^2+\sqrt{\frac{\ln(T)}{Th}}\bigg).
		\end{equation*}
	\end{thm}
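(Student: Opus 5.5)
We will prove the two claims in sequence, applying Theorem \ref{teo2} with $m=0$ (parameter-independent data, $\lambda=0$) to the kernel averages that make up $\hat g$ and $\hat\phi$.

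\emph{Step 1: the estimator $\hat g$.} We write $\hat g(x)-g(x)=e_1^\intercal S_{T,x}^{-1}(D_{T,x}-S_{T,x}(g(x),h g'(x))^\intercal)$ and split it as a bias part $e_1^\intercal S_{T,x}^{-1}B_{T,x}$ plus a stochastic part $e_1^\intercal S_{T,x}^{-1}N_{T,x}$. Here $N_{T,x}$ has entries $T^{-1}\sum_t V_{t,T}K_h(x_t-x)((x_t-x)/h)^j$ for $j=0,1$.

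\begin{itemize}
\item \emph{Design matrix.} The entries of $S_{T,x}$ are deterministic Riemann sums. Because $K$ is Lipschitz with compact support, they differ from $\int_{(-x/h)}^{(1-x)/h}K(u)u^j\,du$ by $O(1/(Th))$, uniformly in $x$. Assumption A.4 (positivity of $K$ on a set of positive measure inside $G_x$, together with the fact that $\{u:x+hu\in[0,1]\}\supseteq G_x$ for $h\le1$) then gives a uniform lower bound on $\det S_{T,x}$, via Cauchy--Schwarz with strict inequality. Hence $\sup_x\|S_{T,x}^{-1}\|=O(1)$ for large $T$; this is the step requiring the most care near the boundary.
\item \emph{Bias.} A Taylor expansion under A.5, combined with the compact support of $K$ ($|x_t-x|\le h$), gives $\sup_x|B_{T,x}|=O(h^2)$.
\item \emph{Noise.} We set $\epsilon_{t,T}=V_{t,T}$. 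By A.6 and \cite{orbe}, $V_{t,T}$ is geometrically $\alpha$-mixing, so A.1 holds for any $\beta$. Writing $V_{t,T}=\sum_k(\prod\phi)e_{t-k,T}+\dots$, Minkowski's inequality gives $\sup\E|V_{t,T}|^s<C$, since $\bar\phi<1$. This yields A.3, with conditions \eqref{eqextrab}--\eqref{eqextrab2} trivial. Since $\E N_{T,x}=0$, Theorem \ref{teo2} gives $\sup_x|N_{T,x}|=O_p(\sqrt{\ln T/(Th)})$.
\end{itemize}

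This proves the first claim.

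\emph{Step 2: the estimator $\hat\phi$.} Write $\hat V_t=V_t-\delta_t$ with $\delta_t=\hat g(x_t)-g(x_t)$, and let $\rho_T=h^2+\sqrt{\ln T/(Th)}$. By Step 1, $\sup_t|\delta_t|=O_p(\rho_T)$. We compare $\hat\Psi_1$ and $\hat\Psi_2$ with their oracle versions $\tilde\Psi_1=T^{-1}\sum G_v(x_t-x)V_tV_{t-1}$ and $\tilde\Psi_2=T^{-1}\sum G_v V_{t-1}^2$.

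\begin{itemize}
\item \emph{Cross terms.} These have the form $T^{-1}\sum G_v\delta_tV_{t-1}$, which is bounded by $\sup|\delta|\cdot T^{-1}\sum|G_v||V_{t-1}|=O_p(\rho_T)$. The factor $T^{-1}\sum|G_v||V|$ is $O_p(1)$ uniformly, by applying Theorem \ref{teo2} to $|V_{t-1}|$ and bounding its mean. The quadratic terms are $O_p(\rho_T^2)$.
\item \emph{Oracle averages.} We apply Theorem \ref{teo2} with $\epsilon_{t,T}=V_tV_{t-1}$ and with $\epsilon_{t,T}=V_{t-1}^2$. These are again geometrically mixing, since they are measurable functions of finitely many lags. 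The requirement of moments of order $s>2$ for these products needs $e$ to have moments of order $2s$; we either read A.6(ii) accordingly or run Theorem \ref{teo2} with $s/2$. This moment bookkeeping, and checking that \eqref{eq9_} still applies with $v\approx c_vh$, is where we expect friction.
\end{itemize}

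\emph{Step 3: the means and the final ratio.} We compute $\E V_{t-1}^2=\sigma^2_t$ and $\E V_tV_{t-1}=\phi(x_t)\sigma^2_{t-1}$, using $\E e_tV_{t-1}=0$. The recursion $\sigma^2_t=\phi(x_t)^2\sigma^2_{t-1}+1$ together with A.5 shows $\sigma^2_t=\sigma^2(x_t)+O(1/T)$ away from $t=0$. Here $\sigma^2(u)=1/(1-\phi(u)^2)\ge1$ is smooth, and the initial condition is forgotten geometrically. Restricting to $\I_T$ with $b_T/v\to\infty$ both makes the kernel window interior, so symmetry of $G$ kills the first-order bias, and keeps $t$ away from the start.

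This gives $\E\tilde\Psi_1(x)=\phi(x)\sigma^2(x)\int G+O(v^2)$ and $\E\tilde\Psi_2(x)=\sigma^2(x)\int G+O(v^2)$, uniformly on $\I_T$. Since the denominator is bounded away from zero with probability tending to one, we use the expansion $\hat\Psi_1/\hat\Psi_2-\phi=(\hat\Psi_1-\phi\hat\Psi_2)/\hat\Psi_2$. Combined with $v\approx c_vh$, this yields $\sup_{\I_T}|\hat\phi-\phi|=O_p(\rho_T)$.
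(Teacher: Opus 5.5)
Your proposal follows the same architecture as the paper's proof. For $\hat g$: a bias/noise split, a uniform bound on $S_{T,x}^{-1}$, and Theorem \ref{teo2} applied to $V_{t,T}$. For $\hat\phi$: a comparison with the oracle ratio $\tilde\Psi_1/\tilde\Psi_2$, Theorem \ref{teo2} for the oracle numerator and denominator, a Riemann-sum/Taylor computation of their means on $\I_T$, and a linearization of the ratio.

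Two of your sub-steps are in fact more careful than the paper's.
\begin{itemize}
\item Your cross-term bound $\sup_t|\delta_t|\cdot T^{-1}\sum_t G_v(x_t-x)|V_{t-1,T}|=O_p(\rho_T)$ is the legitimate one, and $O_p(\rho_T)$ is all the final rate needs. The paper claims these terms are $o_p(\rho_T)$ by pulling $\sup_t|\delta_t|$ out of the signed sum $\sum_t G_v(x_t-x)V_{t,T}\delta_{t-1}$ and then using the $O_p(\sqrt{\ln T/(Tv)})$ bound for $\sum_t G_v(x_t-x)V_{t,T}$; that step is not justified.
\item Your recursion for $\sigma_t^2$ handles the initial-condition transient, which the paper's appeal to \cite{dahlhaus} ignores. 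On $\I_T$ the kernel window forces $t\geq T(b_T-v)$. Together with \eqref{eq9_}, $b_T/v\to\infty$ gives $Tb_T/\ln T\to\infty$, so the transient $O(\bar\phi^{2t})$ is $o(T^{-k})$ for every $k$.
\end{itemize}

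Two points need repair.

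First, the containment $\{u:x+hu\in[0,1]\}\supseteq G_x$ is false for $x\in(0,h)\cup(1-h,1)$. There $G_x=[-1,1]$, but the window is only $[-x/h,(1-x)/h]$; this is exactly the boundary layer you singled out. What does hold for $h\leq 1/2$ is that the window always contains $[0,1]$ or $[-1,0]$. A.4 at $x=0$ and $x=1$ (or symmetry of $K$) gives $K>0$ on a set of positive measure in each half. Moreover,
\[
\det\int_D X(u)X(u)^\intercal K(u)\,du=\tfrac12\iint_{D\times D}(u-w)^2K(u)K(w)\,du\,dw
\]
is monotone in $D$. Hence the determinant is bounded below, uniformly in $x$, by the smaller of its values on $[0,1]$ and $[-1,0]$, which is positive. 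Combined with the bounded trace and the $O(1/(Th))$ Riemann error, this yields $\sup_x\|S_{T,x}^{-1}\|=O(1)$. The paper's Lemmas \ref{l5}--\ref{l6} instead compare $S_{T,x}$ with $S_x$, which jumps at $x\in\{0,1\}$, so their uniform claim fails on $(0,h)$. Your direct comparison with $\int_{-x/h}^{(1-x)/h}$ is the right route once repaired.

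Second, the moment issue you flag is genuine, and it is shared with the paper, which applies Theorem \ref{teo2} to $V_{t-1,T}^2$ and $V_{t,T}V_{t-1,T}$ without comment.
\begin{itemize}
\item Running Theorem \ref{teo2} with $s/2$ only works if $s>4$.
\item For $s\in(2,4]$ the theorem does not apply to $V_{t-1,T}^2$ at all. For $s<4$ that array may have infinite variance, so the $\sqrt{\ln T/(Tv)}$ rate for $\tilde\Psi_2$ can fail.
\end{itemize}
So take your first option: impose A.3 on the product arrays. This means uniformly bounded $2s$-th moments of $e_{t,T}$ and $V_{0,T}$ for some $s>2$, with $\theta$ in \eqref{eq8_}--\eqref{eq9_} computed for that $s$.

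With these two repairs your argument proves the theorem.
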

    
\begin{corol}\label{corol1}
Suppose that Assumptions A.2 and A.4-A.6 hold. Let $\phi(t/T)=\phi$ be constant, for all $T\geq 1, 1\leq t\leq T$. Moreover, the map $\phi\mapsto V_{0,T}(\phi)$ satisfies the following conditions on the parameter dependence: there exists $\xi_{0,T}(\phi)\geq 0$ such that almost surely
\begin{equation*}
    |V_{0,T}(\phi')-V_{0,T}(\phi)|\leq \xi_{0,T}(\phi)|\phi'-\phi|,\qquad \phi',\phi\in [-\bar \phi,\bar\phi]: |\phi'-\phi|\leq h
\end{equation*}
and 
\begin{equation*}
    \sup_{T\geq 1}\E\bigg(\sup_{|\phi|\leq \bar\phi}|V_{0,T}(\phi)|^s\bigg)\leq C,\qquad \sup_{T\geq 1}\E\bigg(\sup_{|\phi|\leq \bar\phi}|\xi_{0,T}(\phi)|^s\bigg)\leq C.
\end{equation*}
Then, 
\begin{equation*}
\sup_{\phi\in[-\bar \phi,\bar\phi]}\sup_{x\in[0,1]}\abs{\hat g(x)-g(x)}=O(h^2)+O_p\bigg(\sqrt{\frac{\ln(T)}{Th}}\bigg).
		\end{equation*}
\end{corol}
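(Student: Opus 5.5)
The plan is to use the linear representation \eqref{eqpesos} and split $\hat g(x)-g(x)$ into a deterministic bias term and a stochastic term. The stochastic term is a combination of the kernel averages \eqref{eq1_} with $\epsilon_{t,T}(\gamma)=V_{t,T}(\phi)$, parameter $\gamma=\phi$, $m=1$, and compact $\Theta=[-\bar\phi,\bar\phi]$, so that $\lambda=0$ and $d_T^\lambda=1$. Since $\phi$ is constant, $V_{t,T}(\phi)=\phi^tV_{0,T}(\phi)+\sum_{k=1}^{t}\phi^{t-k}e_{k,T}$, and the $e_{k,T}$ do not depend on $\phi$. Writing $W_{t,T}(x)$ as in \eqref{eqpesos}, we have
\begin{equation*}
\hat g(x)-g(x)=\Big(\sum_{t=1}^T W_{t,T}(x)g(t/T)-g(x)\Big)+e_1^\intercal S_{T,x}^{-1}\big(\hat\Psi_0(x,\phi),\hat\Psi_1(x,\phi)\big)^\intercal,
\end{equation*}
where $\hat\Psi_j$ denotes \eqref{eq1_} with $\epsilon_{t,T}=V_{t,T}(\phi)$. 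Because $E V_{t,T}(\phi)=0$, we have $E\hat\Psi_j=0$.

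\textbf{Bias term.} The first term does not involve $\phi$. It is handled exactly as in the proof of Theorem~\ref{teo4}. The local linear weights reproduce constants and linear functions exactly, and a second-order Taylor expansion under A.5 then gives $O(h^2)$ uniformly in $x$. The same argument yields $\inf_x\lambda_{\min}(S_{T,x})\geq C>0$ for large $T$, by A.4 and Riemann-sum approximation, so $\sup_x\|S_{T,x}^{-1}\|=O(1)$.

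\textbf{Stochastic term.} It remains to show $\sup_{\phi}\sup_x|\hat\Psi_j(x,\phi)|=O_p(\sqrt{\ln T/(Th)})$ for $j=0,1$, by invoking Theorem~\ref{teo2}. I will verify its hypotheses in the following order.
\begin{enumerate}
\item[(a)] \emph{Mixing (A.1).} Apply Proposition 1 of \cite{orbe} uniformly over $|\phi|\leq\bar\phi$, using A.6(iii). The geometric rate is governed by $\bar\phi$ alone, so the constants $A$ and $\beta$ can be chosen free of $\phi$, and \eqref{eq7_} holds for any $c$.
\item[(b)] \emph{Moments.} Minkowski's inequality gives
\begin{equation*}
\|\sup_{\phi}|V_{t,T}(\phi)|\|_s\leq \|\sup_\phi|V_{0,T}(\phi)|\|_s+\sum_{k\ge0}\bar\phi^k\|e\|_s\leq C,
\end{equation*}
which yields \eqref{eqda1} and \eqref{eqextrab} with $\lambda=0$.
\item[(c)] \emph{Lipschitz condition (A.3).} For $|\phi'-\phi|\leq h$,
\begin{equation*}
|V_{t,T}(\phi')-V_{t,T}(\phi)|\leq |\phi'^t-\phi^t||V_{0,T}(\phi')|+|\phi|^t\xi_{0,T}(\phi)|\phi'-\phi|+\sum_{k=1}^t|\phi'^{t-k}-\phi^{t-k}||e_{k,T}|.
\end{equation*}
Using $|\phi'^n-\phi^n|\leq n\bar\phi^{\,n-1}|\phi'-\phi|$, this suggests taking
\begin{equation*}
\xi_{t,T}(\phi)=t\bar\phi^{t-1}\sup_{\psi}|V_{0,T}(\psi)|+\xi_{0,T}(\phi)+\sum_{n\ge1}n\bar\phi^{n-1}|e_{t-n,T}|,
\end{equation*}
with $e_{k,T}=0$ for $k\leq 0$. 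By Minkowski its $L^s$ norm, including the supremum over $\phi$, is bounded by a constant, since $\sup_t t\bar\phi^{t-1}<\infty$ and $\sum_n n\bar\phi^{n-1}<\infty$.
\end{enumerate}
With these in place, Theorem~\ref{teo2} applies with bandwidth condition \eqref{eq9_}. Since $\Theta$ is compact, $\Theta\subseteq\Theta_T$ for large $T$, so the uniform bound holds over all of $[-\bar\phi,\bar\phi]$.

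\textbf{Main obstacle.} The delicate step is (c). One subtlety is that $\phi'$ may exceed $\bar\phi$ in modulus when $\phi$ is near the boundary of the interval; this is handled by working on $[-\bar\phi',\bar\phi']$ with some $\bar\phi<\bar\phi'<1$, or by restricting to $\phi'$ in $\Theta$. The other point to check is that the bound on $\xi_{t,T}$ is uniform in $t$: the factor $t\bar\phi^{t-1}$ must be bounded independently of $T$, which holds because $\bar\phi<1$. Once (a)–(c) are verified, the rest is bookkeeping.
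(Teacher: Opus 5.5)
Your proposal is correct. Its overall architecture matches the paper: the bias term is $\phi$-free because the weights do not depend on $\phi$, and the stochastic part follows from Theorem~\ref{teo2} with $m=1$ and $\lambda=0$ on the compact set $[-\bar\phi,\bar\phi]$.

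You differ from the paper in how you verify A.3. The paper works with the AR recursion and proves the Lipschitz property by induction. It sets $\xi_{t,T}(\phi)=|V_{t-1,T}(\phi)|+(|\phi|+h)\xi_{t-1,T}(\phi)$ and controls $\E(\sup_\phi\xi_{t,T}^s)$ by iterating with a contraction $|\phi|+h\leq\underline c<1$, which holds for large $T$. You instead use the closed-form moving-average representation, the bound $|\phi'^n-\phi^n|\leq n\bar\phi^{n-1}|\phi'-\phi|$, and Minkowski's inequality. This gives an explicit coefficient whose $L^s$ norm is bounded uniformly in $t$, because $\sup_t t\bar\phi^{t-1}<\infty$ and $\sum_n n\bar\phi^{n-1}<\infty$.

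Each route has its own advantages.
\begin{itemize}
\item The recursive route only evaluates $V$ and $\xi$ at the base point $\phi$. It therefore needs $|\phi'|\leq\bar\phi$ only through the initial condition, and it would extend more readily to recursions without a convenient closed form.
\item Your route needs $\phi'\in[-\bar\phi,\bar\phi]$, which A.3 permits, as you note. In exchange it needs no smallness of $h$ and the bookkeeping is cleaner.
\item Working in $L^s$ norms is a genuine advantage. The paper iterates $\E(\sup\xi_t^s)\leq 2^{s-1}[M+\underline c^{\,s}\E(\sup\xi_{t-1}^s)]$. Done carefully, this compounds the factor to $(2^{s-1}\underline c^{\,s})^k$, which is summable only if $2^{s-1}\underline c^{\,s}<1$, and that fails when $\bar\phi$ is close to $1$. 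Running the recursion in $\|\cdot\|_s$ via Minkowski, as you do, avoids this problem.
\end{itemize}

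You also explicitly address that the mixing constants in A.1 must not depend on $\phi$. The paper's proof leaves this point implicit.
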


If, in addition, Assumption A.6(ii) is strengthened by requiring $s>4$ and the bandwidths satisfy \eqref{eqcond} in Theorem \ref{teo3}, the same results hold almost surely. Drawing $V_{0,T}(\phi)$   from the stationary solution
$V_{0,T}(\phi)=\sum_{j=0}^\infty \phi^j e_{-j,T}$

\subsection{Monte Carlo simulations} 

We analyze the finite-sample performance of the estimators presented in this section through  Monte Carlo experiments. The model defined in \eqref{eqap1}-\eqref{eqap2} is simulated with $g(u)=-70 + 134u - 120\tanh(1.1(u-0.56))$, $\phi(u)=0.755  + 0.02 e^{-0.36u} \sin(1.72\pi u + 2.1)$, $e_{t,T}\overset{i.i.d.}{\sim} N(0,\sigma^2_e)$ and $V_{0,T}\sim N(0,\sigma^2_e/(1-[\phi(0)]^2))$. The simulation is replicated $N=2{,}000$ times for each combination of sample size $T\in\{100,300,700\}$ and parameter $\sigma^2_e\in\{1,3\}$.

The performance of the estimators $\hat g$ and $\hat \phi$ is assessed by the mean average squared error (MASE). 
Let $\hat m$ denote an estimator of $m$, and suppose that $N$ replications are available. Then  $\ase_r(\hat m)\coloneqq  T^{-1}\sum_{t=1}^T (\hat m(t/T)-m(t/T))^2$  is computed for each $r\in\{1,\cdots,N\}$. Accordingly, the MASE is defined  as the average ASE  across all replications,  $L(\hat m)\coloneqq N^{-1}\sum_{r=1}^N \ase_r(\hat m)$. Bandwidths are chosen via the \textit{hv}-block cross-validation method \citep{racine} to account for weak dependence, and the Epanechnikov kernel is employed for smoothing.
\begin{table}[ht]
\renewcommand{\arraystretch}{1.15}
\centering
\caption{Results of the Monte Carlo simulations.}\label{tab:sim}
\vskip.2cm
\begin{tabular}{c|cc|cc}
  \hline
&$L(\hat g)$ & $L(\hat\phi)$&$L(\hat g)$ & $L(\hat\phi)$\\ 
  \hline\hline
$T$&\multicolumn{2}{c}{$\sigma_e^2=1$}&\multicolumn{2}{|c}{$\sigma_e^2=3$}\\
  \hline
100&0.816 & 0.051 & 2.139 & 0.043 \\ 
  300&0.360 & 0.009 & 1.012 & 0.008 \\   
  700&0.171 & 0.002 & 0.477 & 0.002 \\ 
   \hline
\end{tabular}
\end{table}

\begin{figure}[t]
\centering
\includegraphics[width=0.7\textwidth]{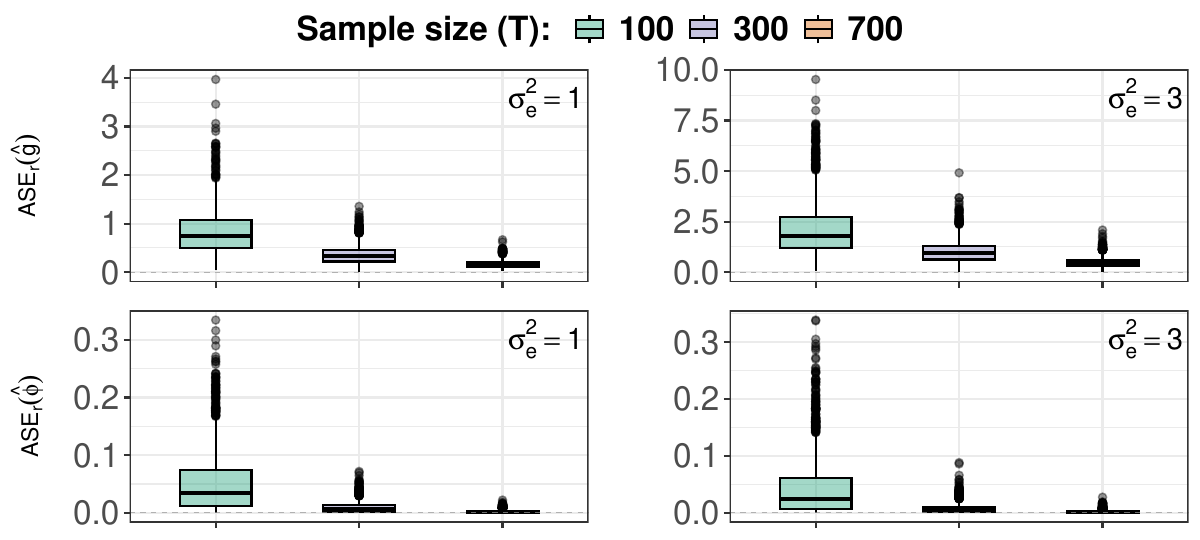}
\caption{Boxplots of the values $\ase_r(\hat g)$ and $\ase_r(\hat \phi)$.}\label{Sim}
\end{figure}

The simulation results are summarized in Table \ref{tab:sim}, which reports MASE values for estimators $\hat g$ and $\hat \phi$. A visual presentation of its finite sample behavior is provided by the boxplots in Figure \ref{Sim}. The results show that MASE values decrease toward zero as $T$ increases, indicating improved estimation accuracy for larger sample sizes. This empirical behavior is consistent with the asymptotic results established in Section \ref{sec2}.

\subsection{Empirical application to sea level anomalies}
We apply our two-step estimation procedure to the monthly mean sea level anomalies (SLA) of the Black Sea. The Black Sea is a semi-enclosed sea in southeastern Europe with limited saltwater exchange with the Mediterranean. Its coastal areas provide favorable conditions for human settlement due to fertile soils and a rich ecosystem (\cite{grinevetsky}), which has drawn attention from numerous scientific studies. The main threats associated with the rising sea level along its coasts are coastal erosion and saltwater intrusion  (\cite{avcsar}). When considering coastal security issues, it is the regional rather than global mean sea levels that are of greatest relevance (\cite{milne,stammer}).

Based on tide gauge and satellite altimetry observations, several studies have reported predominantly positive sloped trends in the Black Sea level since the 1860s \citep{ginzburg,boguslavsky,alpar,avcsar}. The mean sea level exhibited a sharp increase during 1993-1999 (\cite{cazenave}), followed by a more stable upward trend during 1999-2023, showing slight alternating upward and downward movements (\cite{avcsar,wen}). In particular, \citet{wen} documented a rapid increase after 2020.

We use a satellite altimetry dataset from the E.U. Copernicus Marine Service (DOI: 10.48670/moi-00145; accessed on November 8, 2025), corresponding to the Global Ocean Gridded L4 Sea Surface Heights and Derived Variables Reprocessed product. The Black Sea region was defined by the geographical coverage 40\textdegree-48\textdegree N and 26.5\textdegree-42\textdegree E. Daily sea level anomalies were spatially averaged over this region using cosine of latitude weights and temporally aggregated to monthly means. The resulting monthly series was corrected for Glacial Isostatic Adjustment (GIA) using the ICE5G-VM2 model \citep{peltier}, with an estimated regional GIA rate of approximately 0.145 mm/yr based on vertical land motion data from the \href{https://www.atmosp.physics.utoronto.ca/~peltier/data.php}{University of Toronto repository}
(accessed November 8, 2025). The sample covers the period from January 1999 to April 2025 ($T=383$). These data represent gridded sea level anomalies relative to the mean reference period 1993-2012. Annual and semi-annual cycles were removed using the estimators proposed by \citet{vogt}. The resulting Black Sea SLA time series is displayed in Figure \ref{fig:bs} (solid line).

\begin{figure}[ht]
\centering
\includegraphics[width=0.8\textwidth]{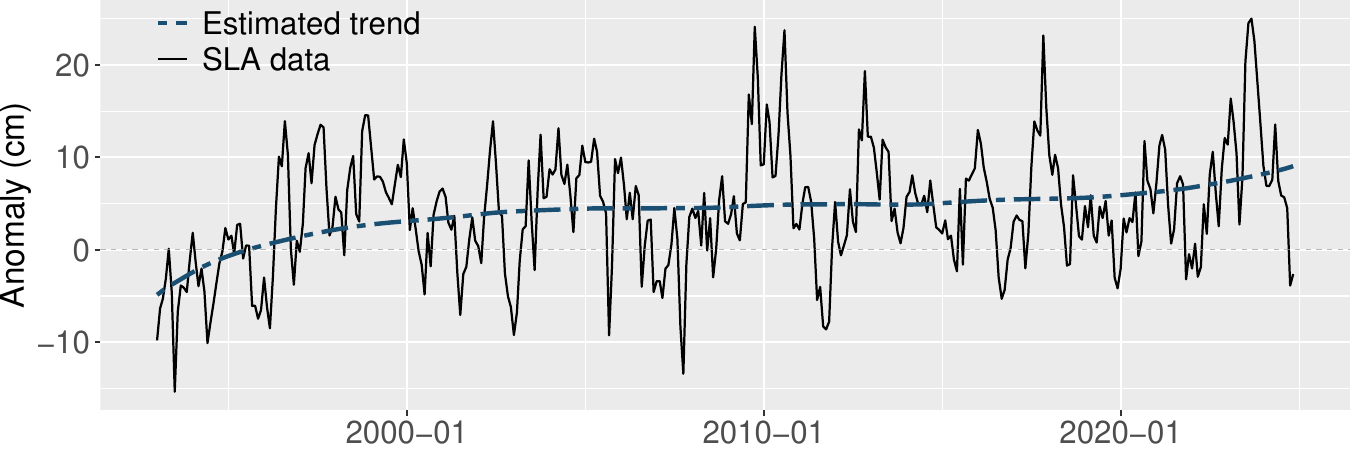}
\vskip -.2cm
\caption{ Monthly sea level anomalies of the Black Sea.}\label{fig:bs}
\end{figure}

Model \eqref{eqap1}-\eqref{eqap2} is fitted to the Black Sea SLA time series using our two-step estimation procedure, yielding estimates of the trend function $g$ and the autoregressive function $\phi$. The autoregressive specification in \eqref{eqap2} is essential to account for the short-term persistence in general SLAs time series. This term captures transient deviations from the long-run mean trajectory, allowing the model to separate the deterministic long-term trend, from short-run dynamics. The chosen bandwidths and kernel function correspond to \textit{hv}-block bandwidths $h_{\text{0}}\approx 0.29$ and $v_{\text{0}}\approx 0.29$, and the Epanechnikov kernel, respectively.  

Figure \ref{fig:bs} also displays the estimated trend function $\hat g$ (blue dashed line). It reveals an overall upward trajectory over the study period, characterized by a deceleration in the early years followed by an acceleration in the latter half. Specifically, the trend shows a sharp increase during 1993-2000, a milder rise until 2020, and a noticeable acceleration in 2020-2025. Such pattern is consistent with the research articles mentioned earlier.

\begin{figure}[ht]
\centering
\includegraphics[width=0.8\textwidth]{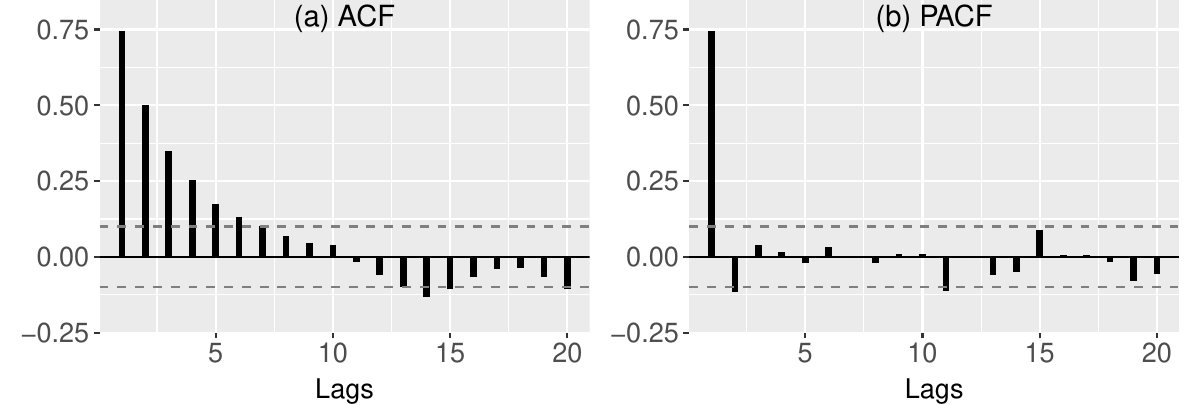}
\vskip -.2cm
\caption{First step residuals diagnostics.}\label{fig:acf1}
\end{figure} 

\begin{table}[ht]
\centering
\caption{Bayesian Information Criterion for fitted ARMA models.}\label{tab:acf}
\vskip.2cm
\begin{tabular}{r|rrrrr}
  \hline
 & MA(0) & MA(1) & MA(2) & MA(3) & MA(4) \\ 
  \hline
AR(0) & 2468.32 & 2256.87 & 2192.68 & 2183.90 & 2178.63 \\ 
  AR(1) & 2160.11 & 2160.76 & 2166.69 & 2172.22 & 2177.93 \\ 
  AR(2) & 2161.04 & 2166.70 & 2172.11 & 2177.69 & 2183.62 \\ 
  AR(3) & 2166.57 & 2172.45 & 2172.57 & 2176.69 & 2182.51 \\ 
  AR(4) & 2172.39 & 2177.70 & 2183.62 & 2189.57 & 2188.58 \\ 
   \hline
\end{tabular}
\end{table}
We briefly analyze the first-step residuals $\hat V_{t,T}=Y_{t,T}-\hat g(t/T)$ using standard diagnostic procedures. Figure \ref{fig:acf1}(b) shows that the  partial autocorrelation function (PACF) drops sharply after lag 1,  while the autocorrelation function (ACF) gradually tails off, as seen in  Figure \ref{fig:acf1}(a). The horizontal dashed lines in Figure \ref{fig:acf1} indicate Bartlett's approximate 95\% confidence limits, $\pm 1.96/\sqrt{T}$, under the null hypothesis of no autocorrelation. An inspection of several ARMA models (Table \ref{tab:acf}) shows that the lowest Bayesian information criterion (BIC) corresponds to the AR(1) specification, with autoregressive parameter of approximately $\phi\approx 0.75$. In particular, provided that  $\phi(\cdot)$ does not vary excessively over time, this finding supports the adequacy of model \eqref{eqap2} for the data.

\begin{figure}[ht]
\centering
\includegraphics[width=0.52\textwidth]{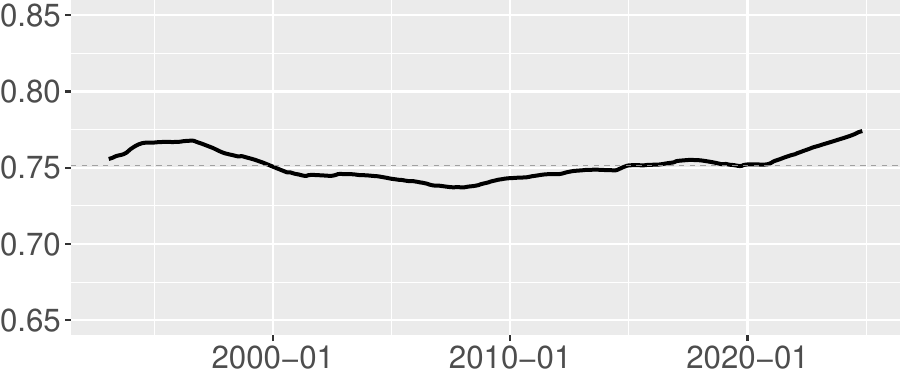}
\vskip -.2cm
\caption{Estimates of the autoregressive function $\hat\phi$.}\label{fig:phi}
\end{figure} 
The estimates $\hat \phi$ obtained from formula \eqref{eqnada} are displayed in Figure \ref{fig:phi}, which suggests a moderate yet stable degree of persistence, with values remaining close to 0.75 throughout the sample period. Together with our previous residual analysis, these results further support the validity of model \eqref{eqap2} for the data.
\begin{figure}[ht]
\centering
\includegraphics[width=.95\textwidth]{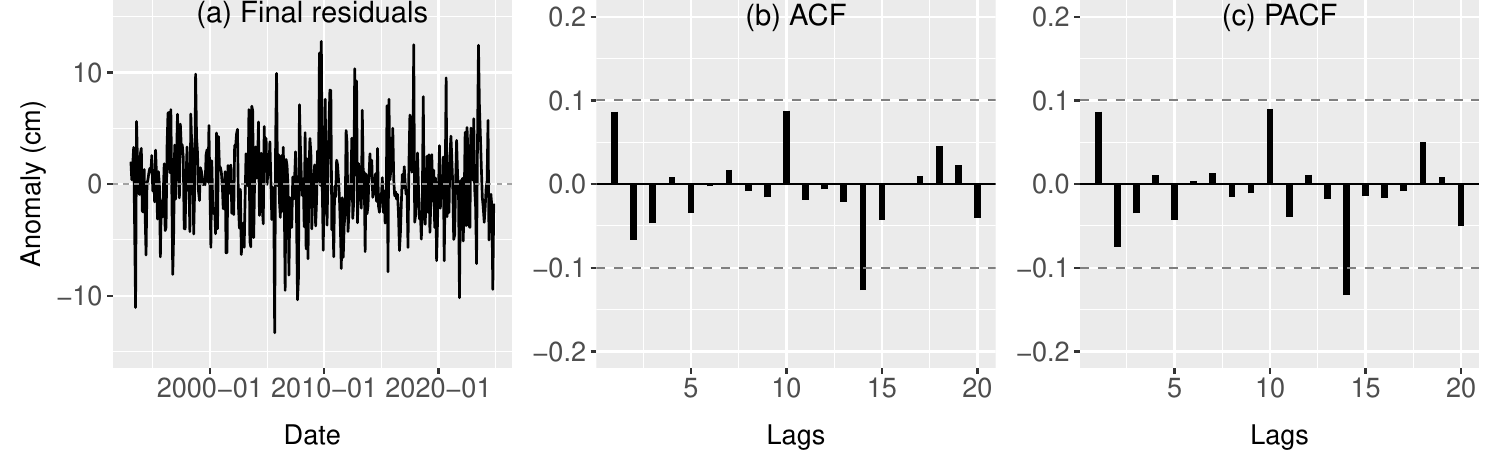}
\vskip -.2cm
\caption{Final residuals diagnostics.}\label{fig:fin_res}
\end{figure} 

Given estimates $\hat g$ and $\hat \phi$, we compute the final residuals $\hat e_{t,T}=\hat V_{t,T}-\hat\phi(t/T)\hat V_{t-1,T}$, shown in Figure \ref{fig:fin_res}(a). The corresponding ACF and PACF, in  Figures \ref{fig:fin_res}(b) and \ref{fig:fin_res}(c), offer no strong evidence of serial dependence. Table \ref{tab:fin} reports the p-values of Ljung-Box tests up to lag 30. All p-values exceed conventional significance levels, implying no sufficient evidence to reject the null of zero autocorrelation in the final residuals. These results for the final residuals provide further support for the overall adequacy of the fitted model.

\begin{table}[ht]
\centering
\caption{P-values of Ljung-Box tests.}\label{tab:fin}
\vskip.2cm
\begin{tabular}{l|ccccccccccc}
  \hline
 Lag& 5 & 8&10 & 13&15 &18& 20 & 23&25&28 &30\\ 
 \hline
  P-value& 0.20 &0.53 &0.42& 0.66& 0.27 &0.42 &0.49 &0.49& 0.53& 0.67 &0.76\\
  \hline
\end{tabular}
\end{table}

	\section{Conclusion}\label{sec4}

    We have established weak and strong uniform convergence rates for general kernel averages under strong mixing conditions in a fixed-design setting with equally spaced grid points $x_{t,T}=t/T$. The analysis departs from the classical random-design framework by developing tools tailored to the deterministic grid structure. This allows us to obtain fixed-design counterparts to the results of \cite{hansen} and \cite{kristensen}, without relying on density-based conditioning arguments.
    The rates are uniform over both the design space and expanding parameter sets, accommodating parameter-dependent triangular arrays. Both weak and almost sure uniform convergence are established without imposing stationarity. However, the almost sure result requires stronger moment and mixing conditions.

    As an illustration, we applied the general theory to local linear regression with time-varying autoregressive errors, derived uniform rates for the estimators, and complemented the theoretical analysis with Monte Carlo experiments and a real-data application.
    
    Overall, the results provide a theoretical foundation for kernel-based inference in nonstationary time-series models under deterministic designs, a setting frequently encountered in practice.

	\section*{Funding Statement}	
	This work was supported by CAPES Foundation (Grant No. 140650/2016-4), Ministry of Education, Brazil.
	
	\section*{Conflict of Interest Statement}
	The authors declare that there are no conflicts of interest. 

    \section*{Data Availability Statement}
    The data used in this study are publicly available from the Copernicus Marine Service (https://doi.org/10.48670/moi-00145). The processed data underlying the empirical analysis can be reproduced from this source following the procedures described in the article.

	\bibliographystyle{apalike}
    \bibliography{bib}

   \section*{Appendix A: Proofs}

We use the following notations: $[k] := \{1,2,\dots,k\}$ for any $k\in\N$, $\sup_x\coloneqq \sup_{x\in [0,1]}$, $\sup_\gamma\coloneqq \sup_{\gamma\in\Theta_T}$, $\sup_i\coloneqq\sup_{1\leq i\leq T}$, $\sup_T\coloneqq \sup_{T\geq 1}$, $\sup_{T,i}\coloneqq \sup_T\sup_i$  and $\sup_{x,\gamma}\coloneqq \sup_x\sup_\gamma$.
	\textbf{Proof of Theorem \ref{teo2}}.
	For brevity, write $k_{i,T}(x)\coloneqq K((i/T-x)/h)$ and $\pi_{i,j,T}(x)\coloneqq((i/T-x)/h)^j$. 
	Decompose
	\begin{align*}
		\hat\Psi(x,\gamma)&=\frac{1}{Th }\sum_{i=1}^T \epsilon_{i,T}(\gamma)k_{i,T}(x)\pi_{i,j,T}(x)I(\abs{\epsilon_{i,T}(\gamma)}> \tau_T)\\
        &\quad +\frac{1}{Th }\sum_{i=1}^T \epsilon_{i,T}(\gamma)k_{i,T}(x)\pi_{i,j,T}(x)I(\abs{\epsilon_{i,T}(\gamma)}\leq \tau_T)\\
		&\coloneqq R_{1,T}(x,\gamma)+R_{2,T}(x,\gamma),
	\end{align*}
	where $I(\cdot)$ is the indicator function and  $\tau_T\coloneqq (a_T h)^{-1/(s-1)}$ with $a_T\coloneqq (\ln T/(Th))^{1/2}$.
    The proof proceeds by controlling the contribution of $R_{1,T}(x,\gamma)$, and then applying an exponential inequality to the truncated term $R_{2,T}(x,\gamma)$ on a suitable grid in $[0,1]\times\Theta_T$.

	We start by focusing on $R_{1,T}$. Denote $Y_{i,T}\coloneqq \sup_\gamma\abs{\epsilon_{i,T}(\gamma)}$. By H\"older's  and Markov's  inequalities, and condition \eqref{eqextrab}, we have uniformly in $i\in[T]$,
	\begin{align}\notag
\E(Y_{i,T}I(Y_{i,T}>\tau_T))&\leq[\E(Y_{i,T}^s)]^{\tfrac{1}{s}} [P(Y_{i,T}>\tau_T)]^{1-\tfrac{1}{s}}\leq [\E(Y_{i,T}^s)]^{\tfrac{1}{s}} \colc[\bigg]{\frac{\E(Y_{i,T}^s)}{\tau_T^s}}^{1-\tfrac{1}{s}}\\\label{eqa1}
        &= \E(Y_{i,T}^s)\tau_T^{1-s}\leq  C (1+d_T^\lambda)\tau_T^{1-s}\leq  C d_T^\lambda\tau_T^{1-s}.
	\end{align}
	On the other hand,  by Lemma \ref{lext}, there exists $C_j>0$ depending only on $j$ such that
\begin{equation}\label{eqponte}
    \sup_{x,\gamma} |R_{1,T}(x,\gamma)|\leq \frac{C_j}{Th}\sum_{i=1}^T Y_{i,T}I(\abs{Y_{i,T}}> \tau_T).
\end{equation}
Thus, applying expectations in \eqref{eqponte} and using inequality \eqref{eqa1}, there exists $c_1>0$ such that
	\begin{align}\label{eqa2}
		\E\Big(\sup_{x,\gamma}\abs{ R_{1,T}(x,\gamma)}\Big)&
        \leq \frac{C_j}{Th}\sum_{i=1}^T \E\big(Y_{i,T}I(\abs{Y_{i,T}}> \tau_T)\big)\leq \frac{c_1}{h}d_T^\lambda\tau_T^{1-s}=c_1d_T^\lambda a_T.
	\end{align}
    Therefore, by Markov's inequality, for all $\delta>0$, taking $C_\delta=2c_1/\delta$ we have
	\begin{align}\notag
		P\parent[\Big]{\sup_{x,\gamma}\abs{R_{1,T}(x,\gamma) -\E R_{1,T}(x,\gamma)}>C_\delta d_T^\lambda a_T}&\leq \frac{2}{C_\delta d_T^\lambda a_T}\E\Big(\sup_{x,\gamma}\abs{ R_{1,T}(x,\gamma)}\Big)\leq \delta,
	\end{align}
    which shows that
\begin{equation}\label{eqcentral}
		\sup_{x,\gamma}\abs{R_{1,T}(x,\gamma) -\E R_{1,T}(x,\gamma)}=O_p(d_T^\lambda a_T).
	\end{equation}
	Thus,  truncating $\epsilon_{i,T}$ at $\tau_T$ incurs an $O_p(d_T^\lambda a_T)$ error uniformly in $x\in[0,1]$ and $\gamma\in\Theta_T$.

	Now, before bounding $R_{2,T}^{(a)}$ we give some useful results. Cover the set $A=[0,1]\times \Theta_T$ with $N=\lceil d_T^m/(a_T h)^{1+m}\rceil$ rectangles of the form
$A_j=\{(x,\gamma):|x-x_j|\leq a_T h, \|\gamma-\gamma_j\|\leq a_T h\}$,  where the centers $(x_l,\gamma_l)$ are chosen in $A$, so that
$[0,1]\times\Theta_T\subseteq\bigcup_{j=1}^N A_j$. Let  $\epsilon_{i,T}^*(\gamma)\coloneqq \epsilon_{i,T}(\gamma)I(|\epsilon_{i,T}(\gamma)|\leq \tau_T)$ and $k^*_{i,T}(x)\coloneqq K^*((i/T-x)/h)$ where $\quad K^*(x)=\Lambda_1 I(\abs{x}\leq 2L_1)$. 
Define
	\begin{align*}
		\tilde \Psi_j(x,\gamma)&=(Th)^{-1}\sum_{i=1}^T \abs{k^*_{i,T}(x)\pi_{i,j,T}(x)\epsilon^*_{i,T}(\gamma)},\\
        \underline \Psi_j(x,\gamma)&=(Th)^{-1}\sum_{i=1}^T \abs{k_{i,T}(x) \pi_{i,j,T}(x)} \xi_{i,T}(\gamma),
	\end{align*}
	where $\xi_{i,T}(\gamma)$ is introduced in Assumption A.3. From Assumption A.3, Lemma \ref{l2}-\ref{l3} with $g(u)=|u|$, and H\"older's inequality, it follows that 
	\begin{align}\notag
		\E\abs[\big]{\tilde \Psi_k(x,\gamma)} &\leq \frac{C(1+d_T^\lambda)^{\tfrac{1}{s}}}{Th}\sum_{i=1}^T \abs{k^*_{i,T}(x)\pi_{i,k,T}(x)}\\\label{eqa4}
        &\quad =\frac{Cd_T^{\tfrac{\lambda}{s}}}{h}\bigg\{h\int_R |K^*(w)||w|^kdw+O(1/T)\bigg\}=O\big(d_T^{\lambda/s}\big),\\\notag
        \E\abs[\big]{\underline \Psi_k(x,\gamma)} &\leq \frac{C(1+d_T^\lambda)^{\tfrac{1}{s}}}{Th}\sum_{i=1}^T \abs{k_{i,T}(x)\pi_{i,k,T}(x)}\\\label{eqa4.2}
        &\quad =\frac{Cd_T^{\tfrac{\lambda}{s}}}{h}\bigg\{h\int_R |K(w)||w|^kdw+O(1/T)\bigg\}=O\big(d_T^{\lambda/s}\big)
    \end{align}
for any $k\leq j$, and uniformly in  $x\in[0,1]$ and $\gamma\in\Theta_T$, since both integrals which appear above  are $O(1)$.

    Fix $l\in\{1,\dots,N\}$ and $(x,\gamma)\in A_l$. Then
\begin{align*}
    |R_{2,T}(x,\gamma)-R_{2,T}(x_l,\gamma_l)|&\leq \frac{1}{Th}\sum_{i=1}^T \abs{\epsilon^*_{i,T}(\gamma)}|\pi_{i,j,T}(x)| \abs{k_{i,T}(x)-k_{i,T}(x_l)}\\
    &\qquad + \frac{1}{Th}\sum_{i=1}^T \abs{k_{i,T}(x_l)}|\pi_{i,j,T}(x_l)| \abs{\epsilon^*_{i,T}(\gamma)-\epsilon^*_{i,T}(\gamma_l)}\\
    &\qquad + \frac{1}{Th}\sum_{i=1}^T |\epsilon^*_{i,T}(\gamma)||k_{i,T}(x_l)||\pi_{i,j,T}(x)-\pi_{i,j,T}(x_l)|\\
    &\coloneqq A_{T}+B_{j,T}+C_{T},
\end{align*}
    with a slight abuse of notation for brevity's sake.  We claim that 
     \begin{align}\label{eqclaim1}
         A_{T}&\leq  2^{j-1} a_T \big(\tilde \Psi_j(x_l,\gamma)+\tilde \Psi_0(x_l,\gamma)\big),\\\label{eqclaim3}
         C_T&\leq j2^j a_T(\tilde\Psi_{d}(x_l,\gamma)+\tilde\Psi_0(x_l,\gamma))
     \end{align}
     where $d=\max\{0,j-1\}$, and for all $k\leq j$
     \begin{equation}\label{eqclaim2}
          B_{k,T}\leq a_T h \underline \Psi_k(x_l,\gamma_l)+U_{l,T}
     \end{equation}
     with 
     \begin{equation}\label{eqdanremen2}
      U_{l,T}\coloneqq   \frac{2}{Th}\sum_{i=1}^T \abs{k_{i,T}^*(x_l)}|\pi_{i,k,T}(x_l)| \sup_{\gamma}|\epsilon_{i,T}(\gamma)|I\Big(\sup_{\gamma}|\epsilon_{i,T}(\gamma)|>\tau_T\Big),
     \end{equation}
     which satisfies 
     \begin{equation}\label{eqdanremen3}
         \max_{1\leq l\leq N}U_{l,T}=O_p(d_T^\lambda a_T) \ \text{ and } \ \E\bigg(\max_{1\leq l\leq N} U_{l,T}\bigg)\leq Cd_T^\lambda a_T.
     \end{equation}
    The proof of \eqref{eqclaim1}-\eqref{eqdanremen3} is postponed until the arguments are completed. Hence, using the bounds \eqref{eqa4}-\eqref{eqclaim2}, we obtain for all sufficiently large $T$,
	\begin{align}\notag
		&\abs{R_{2,T}(x,\gamma)-\E R_{2,T}(x,\gamma)}
        \leq\abs{R_{2,T}(x_l,\gamma_l)-\E R_{2,T}(x_l,\gamma_l)}\\\notag
			&\qquad + \abs{R_{2,T}(x,\gamma)-R_{2,T}(x_l,\gamma_l)}+\E\abs{R_{2,T}(x,\gamma)-R_{2,T}(x_l,\gamma_l)}\\ \notag
		&\leq  \abs{R_{2,T}(x_l,\gamma_l)-\E R_{2,T}(x_l,\gamma_l)}
        +(A_T+\E A_T)+(B_{j,T}+\E B_{j,T})+(C_T+\E C_T)\\ \notag
        &\leq  \abs{R_{2,T}(x_l,\gamma_l)-\E R_{2,T}(x_l,\gamma_l)}
        +Ca_T\sum \nolimits_{k\in\{0, d, j\}}\big(\tilde \Psi_k(x_l,\gamma)+\E\tilde \Psi_k(x_l,\gamma)\big)\\ \notag
        &\qquad + a_T h \big(\underline \Psi_j(x_l,\gamma_l)+\E\underline \Psi_j(x_l,\gamma_l)\big)+U_{l,T}+C d_T^\lambda a_T\\\notag
		  &\leq  \abs{R_{2,T}(x_l,\gamma_l)-\E R_{2,T}(x_l,\gamma_l)}
        +Ca_T\sum \nolimits_{k\in\{0, d, j\}}\big|\tilde \Psi_k(x_l,\gamma)-\E\tilde \Psi_k(x_l,\gamma)\big|\\ \label{eqtenta}
        &\qquad + a_T h \big|\underline \Psi_j(x_l,\gamma_l)-\E\underline \Psi_j(x_l,\gamma_l)\big|+U_{l,T}+C d_T^\lambda a_T.
	\end{align}
    By similar arguments used for $R_{2,T}$ in \eqref{eqtenta}, we obtain for any $k\leq j$
    \begin{align}\notag
        \big|\tilde \Psi_k(x_l,\gamma)-\E \tilde \Psi_k(x_l,\gamma)\big|&\leq \big|\tilde \Psi_k(x_l,\gamma_l)-\E \tilde \Psi_k(x_l,\gamma_l)\big|\\\label{eqjahre}
        &\quad +a_T h \big|\underline \Psi_k(x_l,\gamma_l)-\E\underline \Psi_k(x_l,\gamma_l)\big|+U_{l,T}+C d_T^\lambda a_T.
    \end{align}
    Thus, combining \eqref{eqclaim1}-\eqref{eqjahre}, and using $Ca_T\leq 1$ for $T$ large enough, it follows that 
\begin{align}\notag
    &\sup_{(x,\gamma)\in A_l}\abs{R_{2,T}(x,\gamma)-\E R_{2,T}(x,\gamma)}
        \leq\abs{R_{2,T}(x_l,\gamma_l)-\E R_{2,T}(x_l,\gamma_l)}\\\notag
        &\qquad + \sum\nolimits_{k\in\{0,j,d\}}\Big\{\big|\tilde \Psi_k(x_l,\gamma_l)-\E \tilde \Psi_k(x_l,\gamma_l)\big|+a_T h \big|\underline \Psi_k(x_l,\gamma_l)-\E\underline \Psi_k(x_l,\gamma_l)\big|\Big\}\\\notag
        &\qquad +4U_{l,T}+Cd_T^\lambda a_T\\\notag
        &\leq \underbrace{\abs{R_{2,T}(x_l,\gamma_l)-\E R_{2,T}(x_l,\gamma_l)}}_{\coloneqq L_{l,T}^R}+\sum\nolimits_{k\in\{0,j,d\}}\underbrace{\big|\tilde \Psi_k(x_l,\gamma_l)-\E \tilde \Psi_k(x_l,\gamma_l)\big|}_{\coloneqq L^{\tilde\Psi,k}_{l,T}}\\\notag
        &\qquad+\sum\nolimits_{k\in\{0,j,d\}} \underbrace{a_T h\big|\underline \Psi_k(x_l,\gamma_l)-\E\underline \Psi_k(x_l,\gamma_l)\big|}_{Q_{l,T}^{\xi,k}}+4U_{l,T}+Cd_T^\lambda a_T\\\label{eqa6}
        &\coloneqq L_{l,T}^R+\sum\nolimits_{k\in\{0,j,d\}} L^{\tilde\Psi,k}_{l,T}+  \sum\nolimits_{k\in\{0,j,d\}} Q_{l,T}^{\xi,k}+4U_{l,T}+Cd_T^\lambda a_T.
\end{align}
    Note that the centered term $Q_{l,T}^{\xi,k}=a_T h|\underline \Psi_k(x_l,\gamma_l)-\E\underline \Psi_k(x_l,\gamma_l)|$ need not be bounded, so Liebscher-Rio's inequality (Lemma \ref{l1}) cannot be applied directly. To circumvent this, decompose $\underline \Psi_k=\underline \Psi_k^{(a)}+\underline \Psi_k^{(b)}$ where
\begin{align*}
\underline \Psi_k^{(a)}(x_l,\gamma_l)&=(Th)^{-1}\sum\nolimits_{i=1}^T \abs{k_{i,T}(x_l) \pi_{i,k,T}(x_l)} \xi_{i,T}(\gamma_l)I(\xi_{i,T}(\gamma_l)>\tau_T)\\
\underline \Psi_k^{(b)}(x_l,\gamma_l)&=(Th)^{-1}\sum\nolimits_{i=1}^T \abs{k_{i,T}(x_l) \pi_{i,k,T}(x_l)} \xi_{i,T}(\gamma_l)I(\xi_{i,T}(\gamma_l)\leq \tau_T).
\end{align*}
Then, by the triangle inequality
\begin{align}\label{equadanf}
   Q_{l,T}^{\xi,k}\leq a_T h\big|\underline\Psi_k^{(a)}-\E\underline\Psi_k^{(a)}\big|
+
\big|\underline\Psi_k^{(b)}-\E\underline\Psi_k^{(b)}\big| \coloneqq M_{l,T}^{\xi,k}+L_{l,T}^{\xi,k},
\end{align}
since $a_Th\leq 1$ for $T$ large enough.

Inequalities \eqref{eqa6}-\eqref{equadanf} give
\begin{equation}\label{eqnewres}
    \sup_{(x,\gamma)\in A_l}\big|R_{2,T}(x,\gamma)-\E R_{2,T}(x,\gamma)\big|
\leq L_{l,T}+M_{l,T}+4U_{l,T}+C d_T^\lambda a_T,
\end{equation}
where 
\begin{align*}
    L_{l,T}&\coloneqq L_{l,T}^R+\sum_{k\in\{0,d,j\}} L^{\tilde\Psi,k}_{l,T}+\sum_{k\in\{0,d,j\}}  L^{\xi,k}_{l,T} \quad \text{ and } \quad M_{l,T}\coloneqq \sum_{k\in\{0,d,j\}}  M_{l,T}^{\xi,k}.
\end{align*}
$L_{l,T}$ consists of centered kernel averages with bounded summands and is
therefore controlled via Liebscher-Rio's inequality, whereas  $M_{l,T}$ collects
the remaining tail terms and is controlled via Markov's inequality.

	Denote $e_T(x,\gamma)\coloneqq\abs{R_{2,T}(x,\gamma)-ER_{2,T}(x,\gamma)}$.  Since $A=[0,1]\times\Theta_T \subseteq \bigcup_{l=1}^NA_l$ and inequality \eqref{eqnewres} holds, the following relations of measurable sets are valid for all $C>0$:  
    {\small
    \begin{align*}
        \chave[\bigg]{\sup_{(x,\gamma)\in A} e_T(x,\gamma)>C}&\subseteq \chave[\bigg]{\sup_{(x,\gamma)\in \cup A_l} e_T(x,\gamma)>C}=\bigg\{\max_{1\leq l\leq N}\sup_{(x,\gamma)\in A_l} e_T(x,\gamma)>C\bigg\}\\
        &\subseteq\bigg\{\max_{1\leq l\leq N}  L_{l,T}+M_{l,T}+4U_{l,T}>C\bigg\}\\
        &\subseteq\bigg\{\max_{1\leq l\leq N} L_{l,T}>C/3\bigg\}\bigcup \bigg\{\max_{1\leq l\leq N} M_{l,T}>C/3\bigg\}\bigcup \bigg\{\max_{1\leq l\leq N} U_{l,T}>C/12\bigg\}\\
        &\subseteq \bigg(\bigcup_{l=1}^N\chave[\bigg]{ L_{l,T}> C/3}\bigg)\bigcup \bigg\{\max_{1\leq l\leq N} M_{l,T}>C/3\bigg\}\bigcup \bigg\{\max_{1\leq l\leq N} U_{l,T}>C/12\bigg\}.
    \end{align*}
    }
	Consequently, by the monotonicity and subadditivity of the measure,
	\begin{align}\notag
		P\bigg(\sup_{(x,\gamma)\in A}& \abs{R_{2,T}(x,\gamma)-ER_{2,T}(x,\gamma)}>21C d_T^\lambda a_T \bigg)\\\notag
        &\leq P\parent[\bigg]{\max_{1\leq l\leq N} M_{l,T}>6Cd_T^{\lambda}a_T} +P\parent[\bigg]{\max_{1\leq l\leq N} U_{l,T}>(21/12)Cd_T^{\lambda}a_T}\\\notag
      &\qquad +\sum_{l=1}^N P\parent[\bigg]{ L_{l,T}>7Cd_T^{\lambda}a_T} \\\notag
        &\leq \underbrace{P\parent[\bigg]{\max_{1\leq l\leq N} \max_{0\leq k\leq j} M_{l,T}^{\xi,k}>6Cd_T^{\lambda}a_T}}_{\coloneqq T^{(M)}_0}+\underbrace{P\parent[\bigg]{\max_{1\leq l\leq N} U_{l,T}>Cd_T^{\lambda}a_T}}_{\coloneqq T_0^{(U)}}\\\notag
        &\qquad + \underbrace{N \max_{1\leq l\leq N} P(L_{l,T}^R>Cd_T^\lambda a_T)}_{\coloneqq T_1}+\underbrace{\sum_{k\in\{0,d,j\}}N \max_{1\leq l\leq N} P(L_{l,T}^{\tilde\Psi,k}>Cd_T^\lambda a_T)}_{\coloneqq \sum\nolimits_{q=2}^4 T_q}\\\notag
        &\qquad +\underbrace{\sum_{k\in\{0,d,j\}}N \max_{1\leq l\leq N} P(L_{l,T}^{\xi,k}>Cd_T^\lambda a_T)}_{\coloneqq \sum\nolimits_{q=5}^7 T_q}\\\label{eqa7}
		&\coloneqq  T_0^{(M)}+T_0^{(U)}+\sum\nolimits_{q=1}^7 T_q,
	\end{align}
	for sufficiently large $T$. From \eqref{eqdanremen3}, we already have $\max_{1\leq l\leq N}U_{l,T}=O_p(d_T^\lambda a_T)$, so $T_0^{(U)}$ is controlled. Next, we handle the tail probability $T^{(M)}_0$. In the same fashion as we did in \eqref{eqa1}, condition A.3\eqref{eqda2} implies $ \E(\sup_\gamma\xi_{i,T}(\gamma)I(\sup_\gamma\xi_{i,T}(\gamma)>\tau_T))\leq C d_T^\lambda \tau_T^{1-s}$. Therefore, from Lemma  \ref{lext} and condition \eqref{eqextrab2}, 
    \begin{align*}
        \E \Big(\max_{1\leq l\leq N}\max_{0\leq k\leq j}\Psi_k^{(a)}(x_l,\gamma_l)\Big)&\leq \frac{C_j}{Th}\sum_{i=1}^T \E(\sup_\gamma\xi_{i,T}(\gamma)I(\sup_\gamma\xi_{i,T}(\gamma)>\tau_T))\\
        &\leq  C_a\frac{d_T^\lambda \tau_T^{1-s}}{h}=C_ad_T^\lambda a_T,
    \end{align*}
	for some $C_a>0$, so using Markov's inequality,
\begin{align*}
    T_0^{(M)}&=P\parent[\bigg]{\max_{1\leq l\leq N}\max_{0\leq k\leq j} M^{\xi,k}_{l,T}>7Cd_T^{\lambda}a_T }\leq \E\Big(\max_{1\leq l\leq N}\max_{0\leq k\leq j}M^{\xi,k}_{l,T}\Big)\frac{1}{7Cd_T^\lambda a_T}\\
    &\leq 2a_T h\E\Big(\max_{1\leq l\leq N}\max_{0\leq k\leq j}\Psi_k^{(a)}(x_l,\gamma_l)\Big)\frac{1}{2Cd_T^\lambda a_T}\leq \frac{C_a a_T h}{C}=o(1).
\end{align*}
    
    We can thus focus only on the truncated terms $T_1,\dotsc,T_7$. 
	We proceed by bounding $T_1$ using Lemma \ref{l1} (Liebcher-Rio), the remaining terms are controlled analogously.
    Set 
\begin{equation*}
    Z_{i,T}(x,\gamma)\coloneqq\epsilon^*_{i,T}(\gamma)\pi_{i,j,T}(x)k_{i,T}(x)-\E\big(\epsilon^*_{i,T}(\gamma)\pi_{i,j,T}(x)k_{i,T}(x)\big),
\end{equation*}
which satisfies $\abs{Z_{i,T}(x,\gamma)}\leq 2C_j\tau_T \coloneqq b_T$ by Lemma  \ref{lext} and $\E Z_{i,T}(x,\gamma)=0$.
Since $K$ is supported on $[-L_1,L_1]$, we have
\begin{equation*}
    R_{2,T}(x,\gamma)-ER_{2,T}(x,\gamma)=\frac{1}{Th}\sum_{i=1}^T Z_{i,T}(x,\gamma)=\frac{1}{Th}\sum_{i\in J_x(L_1)} Z_{i,T}(x,\gamma),
\end{equation*}
where 
\begin{equation*}
    J_x(L_1)\coloneqq\Big\{i\in\{1,\dots,T\}:\Big|\frac{i/T-x}{h}\Big|\leq L_1\Big\},
\end{equation*}
whose cardinality, denoted as $n_{T}(x)\coloneqq \#J_x(L_1)$, satisfies $\sup_x n_{T}(x)\leq 4 L_1 Th$ (Lemma \ref{l2.1}).
     By Lemma \ref{teo1},  there exists $C_1>0$ such that  uniformly in $(x,\gamma)\in A$,
    \begin{equation*}
       \sigma^2_{T,m_T}(x,\gamma)\coloneqq \sup_{0\leq j \leq T-1}E\Bigg[\bigg(\sum_{i=1}^{m_T} Z_{j+i,T}(x,\gamma)\bigg)^2\Bigg]\leq C_1 d_T^{\lambda} m_T.
    \end{equation*}
	Set $m_T=(a_T\tau_T)^{-1} $ and  $\epsilon=Md_T^\lambda a_TTh$, where $M>0$ is any constant such that $M>C_1$. Observing that $m_T\to\infty$, $m_T\leq \sqrt{T}\leq T$ and $\tfrac{m_Tb_T}{d_T^\lambda a_T Th}=\tfrac{2\bar K}{d_T^\lambda \ln T}=o(1)$, the conditions of Lemma \ref{l1} (Liebscher-Rio's lemma) hold for $T$ large enough.
    Thus, for all $(x,\gamma)\in A$ and all $T$ sufficiently large, 
	\begin{align}\notag
		P(\abs{R_{2,T}(x,\gamma)-&ER_{2,T}(x,\gamma)}>M d_T^\lambda a_T)=		P\parent[\bigg]{\abs[\bigg]{\sum_{i\in J_x(L_1)} Z_{i,T}(x,\gamma)}>Md_T^\lambda a_T Th}\\\notag
		&\leq  4\exp\parent[\bigg]{-\frac{(Md_T^\lambda a_TTh )^2}{64\sigma^2_{T,m_T}n_T(x)/m_T+(Md_T^\lambda a_TTh ) b_T m_T8/3}}+4\alpha_T(m_T)\frac{n_T(x)}{m_T}\\\notag
		&\leq  4\exp\parent[\bigg]{-\frac{(Md_T^\lambda a_TTh )^2}{254 L_1C_1 d_T^\lambda Th +6M d_T^\lambda \bar K Th }}+16L_1(Am_T^{-\beta})\frac{Th}{m_T}\\\notag
		&\leq  4\exp\colc[\bigg]{-\frac{M d_T^\lambda}{254L_1+6\bar K}\ln T}+16L_1Am_T^{-1-\beta}Th\\\label{eqa8}
		&=  4T^{-Md_T^\lambda /(254L_1+6\bar K)}+16L_1ATh(a_T\tau_T)^{1+\beta}.
	\end{align}
	The same bound holds for $T_2,\dotsc,T_7$, using the same arguments as in $T_1$. Since $N\overset{a}{\approx} d_T^m/(a_T h)^{1+m}$, from \eqref{eqa7}-\eqref{eqa8} we obtain
	\begin{equation}\label{eqa9}
		\sum_{k=1}^7 T_k=O\Big(d_T^m \tfrac{T^{-Md_T^\lambda/(254 L_1+6\bar K)}}{(a_Th)^{1+m}}\Big)+O\Big( d_T^m\tfrac{Th(a_T\tau_T)^{1+\beta}}{(a_Th)^{1+m}}\Big)\coloneqq O(S_1)+O(S_2).
	\end{equation}
	If $\lambda>0$, then $d_T^\lambda\to\infty$ and $S_1=o(1)$ for any $M>0$. If $\lambda=0$, choose $M$ sufficiently large to satisfy $S_1=o(1)$. In both cases, $S_1=o(1)$. For $S_2$, after straightforward algebraic manipulations, we use condition $1/h=O(T^{\theta}/\ln(T))$ in  \eqref{eq9_} to obtain that  
	\begin{align*}
		S_2&= T^{mr+1-\tfrac{(s-2)(1+\beta)}{2(s-1)}+\tfrac{1+m}{2}} 
        h^{\tfrac{1-m}{2}-\tfrac{s(1+\beta)}{2(s-1)}}
        (\ln T)^{\tfrac{(s-2)(1+\beta)}{2(s-1)}-\tfrac{1+m}{2}}\\
        &=O\Big( T^{\tfrac{1}{2(s-1)}\big\{\beta(2-s)+m(s-1)(1+2c)+2s-1+\theta\big[\beta s+m(s-1)+1\big]\big\}}\underbrace{(\ln T)^{-m-\tfrac{1+\beta}{s-1}}}_{=o(1)}\Big)
	\end{align*}
	since the exponent of $h$ is a strictly negative number and $mr\leq mc$.  Thus, using condition \eqref{eq8_},  we obtain $S_2=o(1)$.
    Therefore, $$\sup_{(x,\gamma)\in A}\abs{R_{2,T}(x,\gamma)-ER_{2,T}(x,\gamma)}=O_p(d_T^\lambda a_T).$$ Combining this result with \eqref{eqcentral}, we obtain $\sup_{(x,\gamma)\in A}|\hat\Psi(x,\gamma)-E\hat\Psi(x,\gamma)|=O_p(d_T^\lambda a_T)$, as desired.

It remains to show inequalities \eqref{eqclaim1} and \eqref{eqclaim2}. Fix $l\in\{1,\dotsc,N\}$ and $(x,\gamma)\in A_l$. Note that 
\begin{align*}
|\pi_{i,j,T}(x)|&=\bigg|\frac{i/T-x}{h}\bigg|^j=\bigg|\frac{i/T-x_l}{h}+\frac{x_l-x}{h}\bigg|^j\leq 2^{j-1}\bigg(\bigg|\frac{i/T-x_l}{h}\bigg|^j+\bigg|\frac{x_l-x}{h}\bigg|^j\bigg)\\
&\leq 2^{j-1}\big(|\pi_{i,j,T}(x_l)|+(a_Th)^j\big)\leq 2^{j-1}\big(|\pi_{i,j,T}(x_l)|+1\big)\\
&=2^{j-1}\big(|\pi_{i,j,T}(x_l)|+|\pi_{i,0,T}(x_l)|\big).
\end{align*}
Hence, using Lemma \ref{l3} with $\delta=a_T$, we have
    \begin{equation}
        \abs{\epsilon^*_{i,T}(\gamma)}|\pi_{i,j,T}(x)| \abs{k_{i,T}(x)-k_{i,T}(x_l)}\leq 2^{j-1}\big(|\pi_{i,j,T}(x_l)|+|\pi_{i,0,T}(x_l)|\big)a_T\abs{\epsilon^*_{i,T}(\gamma)k^*_{i,T}(x_l)},
    \end{equation}
    which immediately implies \eqref{eqclaim1}. 
    As for \eqref{eqclaim3}, note that the difference of powers factorization and the binomial theorem yield 
    \begin{equation*}
        |\pi_{i,j,T}(x)-\pi_{i,j,T}(x_l)|\leq j2^j\bigg|\frac{x-x_l}{h}\bigg|\big(|\pi_{i,d,T}(x_l)|+|\pi_{i,0,T}(x_l)|\big)\leq j2^ja_T \big(|\pi_{i,d,T}(x_l)|+|\pi_{i,0,T}(x_l)|\big),
    \end{equation*}
    where $d=\max\{j-1,0\}$. Therefore, since $|K(u)|\leq |K^*(u)|$ for $u\in\R$,
    \begin{equation*}
        |\epsilon^*_{i,T}(\gamma)||k_{i,T}(x_l)||\pi_{i,j,T}(x)-\pi_{i,j,T}(x_l)|\leq  j2^ja_T |\epsilon^*_{i,T}(\gamma)||k_{i,T}^*(x_l)| \big(|\pi_{i,d,T}(x_l)|+|\pi_{i,0,T}(x_l)|\big)
    \end{equation*}
    leading to the desired inequality. Next, we show inequality \eqref{eqclaim2}. 
    From the decomposition
    \begin{align*}
    \epsilon_{i,T}^*(\gamma)-\epsilon_{i,T}^*(\gamma_l)
    &=\big(\epsilon_{i,T}(\gamma)-\epsilon_{i,T}(\gamma_l)\big)
    I\big(|\epsilon_{i,T}(\gamma)|\leq \tau_T, |\epsilon_{i,T}(\gamma_l)|\leq \tau_T\big)\\
    &\qquad +\epsilon_{i,T}(\gamma) I\big(|\epsilon_{i,T}(\gamma)|\leq \tau_T, |\epsilon_{i,T}(\gamma_l)|> \tau_T\big)\\
    &\qquad -\epsilon_{i,T}(\gamma_l)
I\big(|\epsilon_{i,T}(\gamma_l)|\leq \tau_T, |\epsilon_{i,T}(\gamma)|> \tau_T\big),
\end{align*}
we apply the triangle inequality to obtain
    \begin{align}\notag
        |\epsilon^*_{i,T}(\gamma)-\epsilon^*_{i,T}(\gamma_l)|&\leq |\epsilon_{i,T}(\gamma)-\epsilon_{i,T}(\gamma_l)|
        +|\epsilon_{i,T}(\gamma)| I\big(|\epsilon_{i,T}(\gamma_l)|>\tau_T\big)\\ \notag
        &\qquad +|\epsilon_{i,T}(\gamma_l)| I\big(|\epsilon_{i,T}(\gamma)|>\tau_T\big)\\\label{eqdanremen1}
        &\leq |\epsilon_{i,T}(\gamma)-\epsilon_{i,T}(\gamma_l)|+2Y_{i,T}I(Y_{i,T}>\tau_T),
    \end{align} 
where $Y_{i,T}=\sup_\gamma|\epsilon_{i,T}(\gamma)|$. By the Lipschitz property in Assumption A.3\eqref{eqlipz}, we have that almost surely (a.s.)
    \begin{equation}\label{eqc1}
        \abs{k_{i,T}(x_l)} |\epsilon_{i,T}(\gamma)-\epsilon_{i,T}(\gamma_l)|\leq \abs{k_{i,T}^*(x_l)} \xi_{i,T}(\gamma_l)\|\gamma-\gamma_l\|\leq a_T h \abs{k_{i,T}^*(x_l)}\xi_{i,T}(\gamma_l),
    \end{equation}
     since $\|\gamma-\gamma_l\|\leq a_T h$ and $|K(u)|\leq |K^*(u)|$ for all $u\in\R$. Inequalities \eqref{eqdanremen1}-\eqref{eqc1} yields \eqref{eqclaim2}.  Finally, we show \eqref{eqdanremen3}.  Since $K^*$ is compactly supported on $[-2L_1,2L_1]$ and $|K^*(u)|\leq \Lambda_1$,
     \begin{align*}
       \max_{1\leq l\leq N}U_{l,T}&\leq   \frac{2}{Th}\sum_{i=1}^T \max_{1\leq l\leq N}\big\{\abs{k_{i,T}^*(x_l)}|\pi_{i,k,T}(x_l)| \big\}Y_{i,T}I(Y_{i,T}>\tau_T)\\
       &\leq \frac{2^k L_1^k\Lambda_1}{Th}\sum_{i=1}^T Y_{i,T}I(Y_{i,T}>\tau_T).
    \end{align*}
     
     After applying expectations, inequality \eqref{eqa1} implies that
     \begin{align*}
        \E\bigg(\max_{1\leq l\leq N} U_{l,T}\bigg)&\leq\frac{2^k L_1^k\Lambda_1}{Th}\sum_{i=1}^T \E\big(Y_{i,T}I(Y_{i,T}>\tau_T)\big)\leq \frac{c_2}{h}d_T^\lambda\tau_T^{1-s}=c_2d_T^\lambda a_T ,
     \end{align*}
    for some $c_2>0$. By Markov's inequality, for all $\delta>0$, the choice $M=c_2/\delta$ gives
    \begin{align*}
        P\Big(\max_{1\leq l\leq N} U_{l,T}>Md_T^\lambda a_T\Big)&\leq \frac{1}{M d_T^\lambda a_T}\E\Big(\max_{1\leq l\leq N} U_{l,T}\Big)\leq \delta,
    \end{align*}
    as desired.
	\qed
	\vspace{6mm}

    \noindent\textbf{Technical Remarks.}
The following aspects of the proof of Theorem \ref{teo2} are particularly relevant when compared with the arguments of \cite{kristensen} and \cite{hansen}. 
\begin{enumerate}
    \item[(i)] Since $\sup_{x,\gamma} |\E R_{1,T}(x,\gamma)| 
\leq \E(\sup_{x,\gamma} |R_{1,T}(x,\gamma)|)$, 
pointwise bounds on expectations do not control the tail behavior of 
$\sup_{x,\gamma} |R_{1,T}(x,\gamma)|$ through Markov's inequality alone. 
To guarantee a probability bound for
$P(\sup_{x,\gamma}|R_{1,T}|>C d_T^\lambda a_T)$, we employ a slightly larger truncation level $\tau_T$ and the additional uniform moment bound in \eqref{eqextrab}.
\item[(ii)] Under the fixed grid $x_{i,T}=i/T$, the classical variance order $\sup_{x,\gamma}\sigma^2_{T,m_T}(x,\gamma)=O(m_T h)$ obtained for parameter independent data \cite[see][]{hansen} does not follow from the available deterministic integral approximation (see Lemma \ref{teo1}) unless $m_T$ is of order $T$. However, such a choice is not admissible in the present framework, as it would violate the requirements of Liebscher-Rio's inequality (Lemma \ref{l1}). Consequently, the cardinality $n_{T}(x)\coloneqq \#J_x(L_1)$ plays a central role in the application of this inequality, which highlights the importance of the compactness of $\supp K$.
\item[(iii)] The almost sure local Lipschitz condition (Assumption A.3\eqref{eqlipz})is crucial for controlling $B_{k,T}$ in \eqref{eqclaim2}. It produces  bounds of the form $|\epsilon_{i,T}(\gamma)-\epsilon_{i,T}(\gamma_j)|
\leq \xi_{i,T}(\gamma_j)\|\gamma-\gamma_j\|$ with Lipschitz coefficients evaluated at $\gamma_j$, which is directly compatible with the finite covering argument for $A=[0,1]\times \Theta_T$.
\end{enumerate}

	\textbf{Proof of Theorem \ref{teo3}}
	We use the same notation as in the proof of Theorem \ref{teo2}. Let $$\tau_T\coloneqq \bigg(\frac{T^3\phi_T}{h}\bigg)^{\tfrac{1}{2(s-1)}} \ \text{ and } \ a_T\coloneqq \sqrt{\frac{\ln T}{Th}}.$$ As in \eqref{eqa2}, it follows that $E \big(\sup_{x,\gamma}|R_{1,T}(x)|\big)=O\big(d_T^\lambda \tau_T^{1-s}/h\big)$. Note that 
	\begin{align*}
		\frac{\tau_T^{1-s}}{a_T h}=\frac{1}{T\ln T(\ln\ln T)^2}.
	\end{align*}
	Since $\sum_{T=3}^\infty 1/(T\ln T(\ln\ln T)^2)<\infty$ \citep[see p. 63 of][]{rudin}, Markov's inequality gives, for all $C_0>0$
	\begin{align*}
		\sum_{T=1}^\infty P\Big( \sup_{x,\gamma}\lvert &R_{1,T}(x,\gamma)-ER_{1,T}(x,\gamma)\rvert> C_0 d_T^\lambda a_T\Big) \leq \sum_{T=1}^\infty \frac{2 E \big(\sup_{x,\gamma}|R_{1,T}(x)|\big)}{C_0 d_T^\lambda a_T}\\
        &\leq \frac{2C}{C_0}\bigg\{2+\sum_{T=3}^\infty \frac{1}{T\ln T (\ln \ln T)^2}\bigg\}<\infty.
	\end{align*}
	Then, by Borel-Cantelli's lemma, for all $C_0>0$
	\begin{align*}
		&P\Big(\limsup_{T\to\infty} \Big\{\sup_{x,\gamma}\lvert R_{1,T}(x,\gamma)-ER_{1,T}(x,\gamma)\rvert> C_0 d_T^\lambda a_T\Big\}\Big)=0,
	\end{align*}
	which implies $\sup_{x,\gamma}\lvert R_{1,T}(x)-ER_{1,T}(x)\rvert=o\big(d_T^\lambda a_T\big)$ almost surely.
	
	Next, we verify that \eqref{eqa8} remains valid for $\tau_T=(T^{3}\phi_T/h)^{1/(2(s-1))}$ and	 $A_l=\{(x,\gamma):|x-x_l|\leq a_T h(\ln\ln T)^2, \|\gamma-\gamma_l\|\leq a_T h(\ln  \ln T)^2\}$, so that $N\overset{\text{a}}{\approx} d_T^m/(a_Th(\ln\ln T)^2)^{1+m}$. Note that for each $A_l$, we have  $\|\gamma-\gamma_l\|\leq a_T h(\ln  \ln T)^2\leq h$ for all $T$ large enough, since $a_T(\ln  \ln T)^2=o(1)$. Therefore, Assumption A.3(3) applies on each $A_l$ for $T$ large enough. In particular, 
    \begin{align*}
        a_T\tau_T=T^{\tfrac{3}{2(s-1)}-\tfrac12}h^{-\tfrac12-\tfrac{1}{2(s-1)}}(\ln T)^{\tfrac12}\phi_T^{\tfrac{1}{2(s-1)}}=T^{\tfrac{3}{2(s-1)}-\tfrac12+\theta\big[\tfrac12+\tfrac{1}{2(s-1)}\big]}(\ln\ln T)^{-2}.
    \end{align*}
    But the exponent of $T$ is negative if and only if $\theta<1-4/s$, which is  true for $\theta$ given in  \eqref{eq11_}. Thus, $a_T\tau_T=o(1)$ and the choice $m_T\coloneqq (a_T\tau_T)^{-1}\to\infty$ satisfies the conditions of Lemma \ref{l1}. Choosing $M>0$ sufficiently large so that $mr+(1+\theta)\big[\tfrac{m+1}{2}\big]-\tfrac{M d_T^\lambda}{254 L_1+6\bar K}<-1$, it follows that
	\begin{align*}
		S_1&=\frac{T^{mr+\tfrac{m+1}{2}-\tfrac{M d_T^\lambda}{254 L_1+6\bar K}}}{(h\phi_T)^{(m+1)/2}} =O\Big(T^{mr+(1+\theta)\big[\tfrac{m+1}{2}\big]-\tfrac{M d_T^\lambda}{254 L_1+6\bar K}}\phi_T^{-1-m}\Big)=O\big((T\phi_T)^{-1}\big),
	\end{align*}
	since $h^{-(m+1)/2}=O(T^{\theta(m+1)/2}\phi_T^{-(m+1)/2})$ by condition \eqref{eqcond}. For $S_2$, 
    we  have from \eqref{eq11_},
   \begin{align*}
       S_2&\leq d_T^m\tfrac{T(a_T\tau_T)^{1+\beta}}{(a_Th(\ln\ln T)^2)^{1+m}} \\
       &= T^{1+mr+\tfrac{1+m}{2}+\tfrac{(1+\beta)(4-s)}{2(s-1)}}
       h^{-\tfrac{(1+\beta)}{2(s-1)}-\tfrac{m+1}{2}}
       \phi_T^{\tfrac{(1+\beta)}{2(s-1)}-\tfrac{1+m}{2}}
       (\ln\ln T)^{-2(2+\beta+m)}\\
       &=o\Big(T^{1+mr+\tfrac{1+m}{2}+\tfrac{(1+\beta)(4-s)}{2(s-1)}+\theta\big[\tfrac{(1+\beta)}{2(s-1)}+\tfrac{m+1}{2}\big]}
       \phi_T^{-(1+m)}\Big)
       \\
       &=o((T\phi_T)^{-1}),
   \end{align*}
   since $h^{-\tfrac{(1+\beta)}{2(s-1)}-\tfrac{m+1}{2}}=o\Big(T^{\theta\big[\tfrac{(1+\beta)}{2(s-1)}+\tfrac{m+1}{2}\big]}\phi_T^{-\tfrac{(1+\beta)}{2(s-1)}-\tfrac{m+1}{2}}\Big)$ by condition \eqref{eqcond}.
Since $S_1+S_2=O((T\phi_T)^{-1})$ and $\sum_{T=3}^\infty (T\phi_T)^{-1}<\infty$, using the same decomposition as in \eqref{eqa7}, we obtain for all $C_0>0$
\begin{equation*}
    \sum_{T=1}^\infty P\Big( \sup_{x,\gamma}\lvert R_{2,T}(x,\gamma)-ER_{2,T}(x,\gamma)\rvert> C_0 d_T^\lambda a_T\Big) \leq C \sum_{T=1}^\infty (S_1+S_2)<\infty.
\end{equation*}
Hence, the Borel-Cantelli's lemma yields $\sup_{x,\gamma}\lvert R_{2,T}(x,\gamma)-ER_{2,T}(x,\gamma)\rvert=o(d_T^\lambda a_T)$ almost surely. Combining the bounds for $R_{1,T}$ and $R_{2,T}$ gives the desired result.\qed

	
	\textbf{Proof of Theorem \ref{teo4}}
    We first focus on estimator $\hat g$. 
	Write
	\begin{equation}\label{eqinit}
		\abs{\hat g(x)-g(x)}\leq \abs{\hat g(x)-E\hat g(x)}+\abs{E\hat g(x)-g(x)}\coloneqq A_{1,T}+A_{2,T}, \quad \forall x\in[0,1].
	\end{equation}
	 From Assumptions A.2-A.4 and Lemma \ref{l6}, the conditions of Lemma 1.3 and Proposition 1.12 of \cite{tsybakov} are satisfied. In particular, for the local linear weights 
$W_{t,T}(\cdot)$ defined in \eqref{eqpesos} we have, uniformly in $x\in[0,1]$,
\begin{equation*}
    \sum_{t=1}^T W_{t,T}(x)=1, \quad 
    \sum_{t=1}^T W_{t,T}(x)\,(t/T-x)=0,\quad \sup_{1\leq t\leq T}|W_{t,T}(x)|\leq \frac{C}{Th}, \quad \sum_{t=1}^T |W_{t,T}(x)|\leq C,
\end{equation*}
with only $k_T=O(Th)$ nonzero terms due to the compact support of $K$ (see Lemma \ref{l2}). 
Moreover, since $g\in C^2[0,1]$, its derivative $g'$ is Lipschitz continuous.
Hence, using Tsybakov's results and the Taylor expansion with Lagrange reminder, we have
	\begin{align}\notag
		A_{2,T}&=\abs[\bigg]{\sum_{t=1}^T W_{t,T}(x)\chave[\big]{g(t/T)-g(x)} }\\\notag
		&=\abs[\bigg]{\sum_{t=1}^T W_{t,T}(x)\chave[\big]{g(x)+g'\colc{x+\tau_t(t/T-x)}(t/T-x)-g(x)} }\\\notag
		&=\abs[\bigg]{\sum_{t=1}^T W_{t,T}(x)\chave[\big]{g'\colc{x+\tau_t(t/T-x)}(t/T-x)} -\sum_{t=1}^T W_{t,T}(x)(t/T-x)g'(x)}\\\notag
		&\leq \sum_{t=1}^T \abs{W_{t,T}(x)} \abs{t/T-x }\abs[\big]{g'(x+\tau_t(t/T-x))-g'(x)}\\\notag
		&\leq C \sum_{t=1}^T \abs{W_{t,T}(x)} \abs{t/T-x }^2= C \sum_{t=1}^T \abs{W_{t,T}(x)} \abs{t/T-x }^2I\parent[\Big]{\abs[\Big]{\tfrac{t/T-x}{h}}\leq 1}\\\label{eqbias}
		&\leq C\sum_{t=1}^T \sup_x \abs{W_{t,T}(x)} h^2\leq C h^2,
	\end{align}
	uniformly in $x\in[0,1]$, for all $T$ sufficiently large, where $\tau_t\in(0,1)$. Thus, $\sup_{x\in[0,1]}A_{2,T}=O(h^2)$. 
	
	We now turn to the term $A_{1,T}$. Write
	\begin{equation*}
		A_{1,T}=\abs{e_1^\intercal S_{T,x}^{-1} D_{T,x}^V}\qquad ,
	\end{equation*}
	where  $S_{T,x}$ is defined as in \eqref{eq13_} and
    \begin{align*}
        D_{T,x}^V&=T^{-1}\left[\begin{array}{c} \sum_{t=1}^T V_{t,T}K_h(t/T-x)\\
		\sum_{t=1}^T V_{t,T}K_h(t/T-x)((t/T-x)/h)\end{array} \right]\coloneqq \left[\begin{array}{c} d_{T,0}^V(x)\\
		d_{T,1}^V(x)\end{array} \right].
    \end{align*}
	
	Using Cauchy-Schwarz inequality and Lemma \ref{l7} with $v=\big(d_{T,0}^V(x),d^V_{T,1}(x)\big)^\intercal$ yield 
\begin{equation*}
	\sup_{x\in[0,1]}\abs[\Bigg]{e_{1}^\intercal S_{T,x}^{-1}\left[
		\begin{array}{c}
			 d^V_{T,0}(x)  \\ 
			 d^V_{T,1}(x) 
		\end{array}
		\right]}\leq \sup_{x\in[0,1]}\underbrace{\norm{e_{1}}}_{=1} \norm[\bigg]{S_{T,x}^{-1}\left[
		\begin{array}{c}
			d^V_{T,0}(x)  \\ 
			d^V_{T,1}(x) 
		\end{array}
		\right]}<\frac{1}{\lambda_0}\sup_{x\in[0,1]}\norm[\bigg]{\left[
		\begin{array}{c}
			d^V_{T,0}(x) \\ 
			d^V_{T,1}(x)
		\end{array}
		\right]},
\end{equation*}
for all $T$ large enough. Therefore, by the equivalence of the $L_2$ and $L_1$ norms in finite-dimensional real spaces \citep[Corollary 5.4.5 of][]{horn} and Theorem \ref{teo2}, 
\begin{align*}
	\sup_{x\in[0,1]}A_{1,T}\leq\frac{1}{\lambda_0}\sup_{x\in[0,1]}\norm[\bigg]{\left[
		\begin{array}{c}
			d^V_{T,0}(x) \\ 
			d^V_{T,1}(x)
		\end{array}
		\right]}\leq \frac{1}{\lambda_0}\left(\sup_{x\in[0,1]}\abs[\big]{d^V_{T,0}(x)}+\sup_{x\in[0,1]}\abs[\big]{d^V_{T,1}(x)}\right)=O_p\parent{a(h)}.
\end{align*}
By Assumption A.6 and Proposition 1 of \cite{orbe}, the array $\{V_{t,T}\}$ is geometrically strongly mixing and possesses uniformly bounded moments of order $s>2$. Hence, the conditions of Theorem \ref{teo2} are satisfied for the zero-mean array $\{V_{t,T}\}$, and we obtain
\begin{equation*}
    \sup_{x\in[0,1]}\abs[\big]{d^V_{T,0}(x)}+\sup_{x\in[0,1]}\abs[\big]{d^V_{T,1}(x)}=O_p\parent{a(h)},
\end{equation*}
 where $a(u)\coloneqq \sqrt{\ln(T)/(Tu)}$.  Consequently, $\sup_{x\in[0,1]}A_{1,T}=O_p(a(h))$.
Combining this result with \eqref{eqbias} and \eqref{eqinit}, we have 
\begin{equation}\label{eqglin}
    \sup_{x\in[0,1]}\abs{\hat g(x)-g(x)}=O(h^2)+O_p(a(h)).
\end{equation}

Next, we show that $\sup_{x\in\I_T}\abs{\hat\phi(x)-\phi(x)}=O(v^2)+O_p(a(v))$. Define the unfeasible estimator
 \begin{equation}\label{equnf2}
        \tilde \phi(x)=\frac{\tilde \Psi_1(x)}{\tilde \Psi_2(x)}\coloneqq  \frac{1/T\sum_{t=2}^T G_v(x_t-x) V_{t,T}   V_{t-1,T}}{1/T\sum_{t=2}^T G_v(x_t-x)  V_{t-1,T}^2},\quad x\in\I_T.
    \end{equation}
where $x_t=t/T$, and write $$\hat V_{t,T}=Y_{t,T}-\hat g(t/T)=\big(g(t/T)-\hat g(t/T)\big)+V_{t,T}\coloneqq r_{t,T}+V_{t,T},\quad 1\leq t\leq T.$$ A direct expansion gives for $x\in\I_T$,
\begin{align*}
    \hat\phi(x)&=\frac{1/T \sum_{t=2}^T G_v(x_t-x) \Big[V_{t,T}V_{t-1,T}
+V_{t,T}r_{t-1,T}
+V_{t-1,T}r_{t,T}
+r_{t,T}r_{t-1,T}\Big]}{1/T \sum_{t=2}^T G_v(x_t-x) \Big[V_{t-1,T}^2
+2V_{t-1,T}r_{t-1,T}
+r_{t-1,T}^2\Big]}\\
&=\frac{\tilde \Psi_1(x)+1/T \sum_{t=2}^T G_v(x_t-x) \Big[V_{t,T}r_{t-1,T}
+V_{t-1,T}r_{t,T}
+r_{t,T}r_{t-1,T}\Big]}{\tilde \Psi_2(x)+1/T \sum_{t=2}^T G_v(x_t-x) \Big[
2V_{t-1,T}r_{t-1,T}
+r_{t-1,T}^2\Big]}\\
&\coloneqq \frac{\tilde \Psi_1(x)+R_{1,T}(x)}{\tilde \Psi_2(x)+R_{2,T}(x)}.
\end{align*}
Hence
\begin{align}\notag
    |\hat\phi(x)-\tilde\phi(x)|&=\bigg|\frac{\tilde \Psi_1(x)+R_{1,T}(x)}{\tilde \Psi_2(x)+R_{2,T}(x)}-\frac{\tilde \Psi_1(x)}{\tilde \Psi_2(x)}\bigg|\\\label{eqdes1}
    &\leq \frac{|R_{1,T}(x)|}{|\tilde \Psi_2(x)+R_{2,T}(x)|}+|\tilde\phi(x)|\frac{|R_{2,T}(x)|}{|\tilde \Psi_2(x)+R_{2,T}(x)|}.
\end{align}
By applying Theorem \ref{teo2} to the zero mean data $ V_{t,T}$ with kernel $G$ and bandwidth $v$, we have 
\begin{equation*}
   \sup_{x\in\I_T}\bigg| \frac{1}{T}\sum_{t=2}^T G_v(x_t-x)V_{t,T}\bigg|=O_p(a(v)) \quad \text{ and }\quad
    \sup_{x\in\I_T}\bigg|\frac{1}{T}\sum_{t=2}^T G_v(x_t-x)V_{t-1,T}\bigg|=O_p(a(v)).
\end{equation*}
Moreover, Lemma \ref{l2} yields $ \sup_{x\in\I_T}|\frac{1}{T}\sum_{t=2}^T G_v(x_t-x)|=O(1)$.
Therefore, using $\sup_t |r_{t,T}|=O_p(h^2+a(h))$,
\begin{equation}\label{eqR1}
    \sup_{x\in\I_T}|R_{i,T}(x)|=O_p((h^2+a(h))a(v)+(h^2+a(h))^2)=o_p(h^2+a(h)),\quad i\in\{1,2\},
\end{equation}
 since $h^2+a(h)=o(1)$ and $v\overset{a}{\approx}c_v h$ implies $a(v)=O(a(h))=o(1)$. On the other hand, we argue that the denominator $\inf_{x\in\I_T}|\tilde \Psi_2(x)+R_{2,T}(x)|$ is bounded away from zero with probability approaching to one. As shown below in \eqref{eqconclu2}, uniformly in $x\in\I_T$ and for all $T$ sufficiently large
 \begin{equation*}
     \E(\tilde \Psi_2(x))=\Lambda(x)+O(v^2+1/T),\quad \Lambda(x)\coloneqq \sigma^2_e/(1-[\phi(x)]^2), 
 \end{equation*}
 and by A.6(i), we have  $\inf_{x\in\I_T} \Lambda(x)\geq \sigma^2_e/(1-\bar \phi^2)\coloneqq c_e>0$. In addition, Theorem \ref{teo2} gives $\sup_{x\in\I_T} |\tilde\Psi_2(x)-\E \tilde\Psi_2(x)|=O_p(a(v))=o_p(1)$. Thus, by the triangle inequality 
\begin{equation*}
    \inf_{x\in\I_T}  \big|\tilde\Psi_2(x)\big|\geq \inf_{x\in\I_T} \big|\E \tilde\Psi_2(x) \big|-\sup_{x\in\I_T} |\tilde\Psi_2(x)-\E \tilde\Psi_2(x)|\geq c_e-\sup_{x\in\I_T} |\tilde\Psi_2(x)-\E \tilde\Psi_2(x)|.
\end{equation*}
Therefore, if $ \sup_{x\in\I_T} |\tilde\Psi_2(x)-\E \tilde\Psi_2(x)|\leq c_e/2$ , then $ \inf_{x\in\I_T}  \big|\tilde\Psi_2(x)\big|\geq c_e/2$. Thus, using the monotonicity of the probability measure,
\begin{align}\notag
    P\Big(\inf_{x\in\I_T}  \big|\tilde\Psi_2(x)\big|\geq c_e/2\Big)&\geq P\Big(\sup_{x\in\I_T} |\tilde\Psi_2(x)-\E \tilde\Psi_2(x)|\leq c_e/2\Big)\\\label{eqsustain}
    &=1-P\Big(\sup_{x\in\I_T} |\tilde\Psi_2(x)-\E \tilde\Psi_2(x)|>c_e/2\Big)\to 1,\quad T\to\infty.
\end{align}
Applying again the triangle inequality, if $\sup_{x\in\I_T} |R_{2,T}|\leq c_e/4$ and $\inf_{x\in\I_T}  \big|\tilde\Psi_2(x)\big|\geq c_e/2$, then 
\begin{align*}
    \inf_{x\in\I_T} |\tilde \Psi_2(x)+R_{2,T}(x)|\geq \inf_{x\in\I_T} |\tilde \Psi_2(x)|-\sup_{x\in\I_T} |R_{2,T}(x)|\geq c_e/4,
\end{align*}
so, by the monotonicity and subadditivity of $P$, and using \eqref{eqR1}-\eqref{eqsustain},
\begin{align*}
    P\Big(\inf_{x\in\I_T} |\tilde \Psi_2(x)+&R_{2,T}(x)|\geq c_e/4\Big)\geq P\Big(\sup_{x\in\I_T} |R_{2,T}|\leq c_e/4 \ \text{ and }  \inf_{x\in\I_T} \big|\tilde\Psi_2(x)\big|\geq  c_e/2\Big)\\
    &=1- P\Big(\sup_{x\in\I_T} |R_{2,T}|> c_e/4 \ \text{ or }  \inf_{x\in\I_T} \big|\tilde\Psi_2(x)\big|<  c_e/2\Big)\\
    &\geq 1-P\Big(\sup_{x\in\I_T} |R_{2,T}|> c_e/4\Big)-P\Big(\inf_{x\in\I_T}  \big|\tilde\Psi_2(x)\big|< c_e/2\Big)\to 1,\quad T\to\infty.
\end{align*}
Hence, $\sup_{x\in\I_T} \{1/|\tilde \Psi_2(x)+R_{2,T}(x)|\}=O_p(1)$. Assuming \eqref{equnf0} holds, which is proved below in \eqref{eqfinali}, 
\begin{equation}\label{equnf0}
    \sup_{x\in\I_T }|\tilde \phi (x)-\phi(x)|=O_p(v^2+a(v))=o_p(1),
\end{equation}
it follows that
\begin{align*}
    \sup_{x\in\I_T}|\tilde\phi(x)|\leq \sup_{x\in\I_T }|\tilde \phi (x)-\phi(x)|+\sup_{x\in\I_T }|\phi(x)|=o_p(1)+O(1)=O_p(1),
\end{align*}
 since  $\phi$ is bounded on $[0,1]$. Taking suprema in \eqref{eqdes1} and using \eqref{eqR1}-\eqref{equnf0}, we obtain
 \begin{align*}
    \sup_{x\in\I_T}|\hat\phi(x)-\tilde\phi(x)|&\leq\sup_{x\in\I_T}\bigg\{\frac{1}{|\tilde \Psi_2(x)+R_{2,T}(x)|}\bigg\}\Big(\sup_{x\in\I_T}|R_{1,T}(x)|+\sup_{x\in\I_T}|\tilde\phi(x)|\sup_{x\in\I_T}|R_{2,T}(x)|\Big)\\
    &=o_p(h^2+a(h)).
\end{align*}
Thus, provided \eqref{equnf0} holds, we have that
\begin{equation}\label{equnf}
     \hat \phi(x)=\tilde\phi(x)+o_p(h^2+a(h)),
\end{equation}
uniformly in $x\in\I_T$.  

By Assumption A.6, the conditions of Proposition 1 in \cite{orbe} are satisfied which, in turn, is used to guarantee that $\{V_{t,T}\}$ attends Assumption A.1, for all $\beta>0$ and some $s>2$. We then use Theorem \ref{teo2}
to obtain $\sup_{x\in[0,1]}\big|\tilde \phi(x)-\E(\tilde \phi(x))\big|=O_p(a(v))$. By \cite{dahlhaus}, $\E(V_{t,T}^2)=\Lambda(t/T)+o(1/T)$ for all $1\leq t\leq T$, where $\Lambda(t/T)\coloneqq \sigma^2_e/(1-[\phi(t/T)]^2)$ and $\sigma^2_e\coloneqq\var(e_{t,T})$. From Assumptions A.6 and A.5, $\Lambda(\cdot)$ is twice continuously differentiable on $[0,1]$ and $\sup_{u\in[0,1]}|\Lambda'(u)|\leq C$. Thus, the Mean Value Theorem implies $\E(V_{t-1,T}^2)=\Lambda((t-1)/T)=\Lambda(t/T)+O(1/T)$ uniformly in $t\in\{2,\cdots, T\}$.
Therefore, by integral approximations by finite sums (along the lines of the proof of Lemma \ref{l2}), second-order Taylor expansion and Weierstrass Extreme Value theorem, we obtain 
\begin{align}\notag
    \E\left(\tilde \Psi_1(x)\right)&=\frac{1}{T}\sum_{t=2}^T G_v(t/T-x)\phi(t/T)\E(V_{t-1,T}^2+V_{t-1,T}e_{t,T})\\\notag
    &=\frac{1}{T}\sum_{t=2}^T G_v(t/T-x)\phi(t/T)\left[\Lambda(t/T)+O(1/T)\right]\\\notag
    &=\frac{1}{v}\int_0^1 G\left(\tfrac{u-x}{v}\right)\phi(u)\Lambda(u)du+O(1)\frac{1}{Tv}\int_0^1 G\left(\tfrac{u-x}{v}\right)du+O(1/T)\\\notag
    &=\int_{-x/v}^{(1-x)/v} G\left(w\right)\phi(x+wv)\Lambda(x+wv)dw+O(1/T)\\\label{eqconclu1}
    &=\phi(x)\Lambda(x)+O(v^2)+O(1/T)\coloneqq \Psi_1(x)+O(v^2+1/T),
\end{align}
uniformly in $x\in \I_T$, where we use the fact that $G(\cdot)$ is a second order kernel satisfying $\int_{-1}^1G(w)dw=1$, and that, for $T$ large enough, $[-1,1]\subseteq[-x/v, (1-x)/v]$ uniformly in $x\in\I_T$. In particular,  $x+wv\in(0,1)$ for all $|w|\leq 1$. Analogously, it holds that uniformly in $x\in \I_T$,
\begin{equation}\label{eqconclu2}
    \E\left(\tilde \Psi_2(x)\right)=\Lambda(x)+O(v^2)+O(1/T)\coloneqq \Psi_2(x)+O(v^2+1/T).
\end{equation}
Next, using the triangle inequality and the Mean Value theorem, we have that
\begin{align}\notag
    \big|\tilde\phi(x)-\phi(x)\big|&= \bigg|\frac{\tilde\Psi_1(x)}{\tilde\Psi_2(x)}-\frac{\Psi_1(x)}{\Psi_2(x)}\bigg|=\bigg|\frac{\tilde\Psi_1(x)-\Psi_1(x)}{\tilde\Psi_2(x)}+\frac{\Psi_1(x)\big(\Psi_2(x)-\tilde \Psi_2(x)\big)}{\Psi_2(x)\tilde \Psi_2(x)}\bigg|\\\label{eqconclu3}
    &\leq \frac{1}{\Psi_2^*(x)}\big|\tilde\Psi_1(x)-\Psi_1(x)\big|+\frac{|\Psi_1^*(x)|^{\ }}{[\Psi_2^*(x)]^2}\big|\Psi_2(x)-\tilde \Psi_2(x)\big|,
\end{align}
where $\Psi_j^*(x)=\Psi_j(x)+\tau_j(\tilde\Psi_j(x)-\Psi_j(x))$ for some $\tau_j\in[0,1]$, $j\in\{1,2\}$. As argued by \cite{kristensen}, $1/\Psi_2^*(x)$ and $|\Psi_1^*(x)|/[\Psi_2^*(x)]^2$ are $O_{a.s.}(1)$ uniformly in $x\in\I_T$. Hence, from \eqref{eqconclu1}-\eqref{eqconclu3}, and using Theorem \ref{teo2} once more, 
\begin{equation}\label{eqfinali}
    \sup_{x\in\I_T}\big|\tilde\phi(x)-\phi(x)\big|=O_{a.s.}(1)\Big(O_p(a(v))+O(v^2+1/T)\Big)=O_p\big(a(v)+v^2\big),
\end{equation}
since $1/T=o(a(v))$. This proves \eqref{equnf0}, and so \eqref{equnf} is valid. Combining this result with \eqref{equnf} yields
\begin{equation*}
     \sup_{x\in\I_T}\big|\hat\phi(x)-\phi(x)\big|=O_p\big(a(h)+h^2\big)+O_p\big(a(v)+v^2\big)=O_p\big(a(h)+h^2\big).
\end{equation*}
\qed

\textbf{Proof of Corollary \ref{corol1}} Following the notations in the proof of Theorem \ref{teo4}, the bias term satisfies
\begin{equation*}
    A_{2,T}=\abs[\bigg]{\sum_{t=1}^T W_{t,T}(x)\chave[\big]{g(t/T)-g(x)} }=O(h^2)
\end{equation*}
uniformly in $x\in[0,1]$ and $\phi\in[-\bar\phi,\bar\phi]$ because the weight $W_{t,T}(x)$ is independent of $\phi$. The stochastic term $A_{1,T}$ is controlled by applying Theorem \ref{teo2} to terms of form
\begin{equation*}
    \frac{1}{T}\sum_{i=1}^T V_{t,T}(\phi)K_h(t/T-x)((t/T-x)/h)^j,\qquad j\in\{0,1\}.
\end{equation*}
Thus, to obtain the uniformity over $|\phi|\leq \bar\phi$, it is sufficient to show that $\{V_{t,T}(\phi)\}$ satisfies the parameter dependence conditions of Theorem \ref{teo2}:  for all $T\geq 1$, $t\in[T]$, there exists  $\xi_{t,T}(\phi)\geq 0$, such that almost surely
        \begin{equation}\label{eqcond1}
            |V_{t,T}(\phi')-V_{t,T}(\phi)|\leq \xi_{t,T}(\phi)\|\phi'-\phi\|,\qquad \phi',\phi\in\Theta: \|\phi'-\phi\|\leq h
        \end{equation}
    and
    \begin{align}\label{eqcond2}
    \sup_{T\geq 1}\sup_{1\leq i\leq T} \E\Big(\sup_{|\phi|\leq\bar\phi} |V_{i,T}(\phi)|^s\Big)&\leq C,\\\label{eqcond3}
             \sup_{T\geq 1}\,\sup_{1\leq i\leq T}\E(\sup_{|\phi|\leq \bar\phi}| \xi_{i,T}(\phi)|^s)&\leq C.
	\end{align}
	Under $|\phi|<1$, model \eqref{eqap2} admits the causal moving average representation given by $V_{t,T}(\phi)=\phi^t V_{0,T}(\phi)+\sum_{j=0}^{t-1} \phi^j e_{t-j,T}$, 
    and so $|V_{t,T}(\phi)|^s\leq  2^{s-1} (|V_{0,T}(\phi)|^s+(\sum_{j=0}^{t-1} \bar \phi^j |e_{t-j,T}|)^s$. Note that, by H\"older's inequality, we have
     \begin{align*}
        \sum_{j=0}^{t-1} \bar \phi^j |e_{t-j,T}|= \sum_{j=0}^{t-1} (\bar \phi^{\tfrac{j}{s}} |e_{t-j,T}|)\bar \phi^{\tfrac{j(s-1)}{s}}\leq \bigg(\sum_{j=0}^{t-1} \bar \phi^{j} |e_{t-j,T}|^s\bigg)^{\tfrac{1}{s}} \bigg( \sum_{j=0}^{t-1} \bar \phi^{j}\bigg)^{\tfrac{s-1}{s}}.
    \end{align*}
    Thus
    \begin{align}\notag
        \sup_{t,T}\E\Big(\sup_{|\phi|\leq\bar\phi}|V_{t,T}(\phi)|^s\Big) &\leq 2^{s-1}\bigg[\sup_T\E\Big(\sup_{|\phi|\leq\bar\phi}|V_{0,T}(\phi)|^s\Big)+\sup_{t,T}\E(|e_{t,T}|^s)\sum_{j=0}^\infty \bar \phi^j\bigg]\\\label{eqauxiliar}
        &\leq C\bigg[1+\frac{1}{1-\bar\phi}\bigg]\leq C,
    \end{align}
so \eqref{eqcond2} is verified. Now, we prove \eqref{eqcond1} by induction. By assumption on the initial condition ($t=0$), there exists $\xi_{0,T}$ such that, almost surely (a.e.) for all $\phi,\phi'\in\Theta$ with $|\phi'-\phi|\leq h$, $|V_{0,T}(\phi')-V_{0,T}(\phi)|\leq \xi_{0,T}(\phi)|\phi'-\phi|$. Fix $t\geq 1$. Suppose that there exists a nonnegative function $\xi_{t-1,T}(\phi)$ such that, a.e. for all $\phi,\phi'\in\Theta$ with $|\phi'-\phi|\leq h$, $|V_{t-1,T}(\phi')-V_{t-1,T}(\phi)|\leq \xi_{t-1,T}(\phi)|\phi'-\phi|$. From this induction hypothesis, it follows that  a.e. for all $\phi,\phi'\in\Theta$ with $|\phi'-\phi|\leq h$,
\begin{align}\notag
    |V_{t,T}(\phi')-V_{t,T}(\phi)|&=|\phi' V_{t-1,T}(\phi')-\phi V_{t-1,T}(\phi)|\\\notag
    &=|(\phi'-\phi) V_{t-1,T}(\phi')+\phi( V_{t-1,T}(\phi')-V_{t-1,T}(\phi))|\\\notag
    &\leq |\phi'-\phi| |V_{t-1,T}(\phi')|+|\phi| | V_{t-1,T}(\phi')-V_{t-1,T}(\phi)|\\\label{eqprof1}
    &\leq h |V_{t-1,T}(\phi')|+|\phi|\xi_{t-1,T}(\phi)|\phi'-\phi|,
\end{align}
and
\begin{align}\label{eqprof2}
    |V_{t-1,T}(\phi')|&\leq |V_{t-1,T}(\phi)|+|V_{t-1,T}(\phi')-V_{t-1,T}(\phi)|\leq |V_{t-1,T}(\phi)|+\xi_{t-1,T}(\phi)|\phi'-\phi|.
\end{align}
Plugging \eqref{eqprof2} into \eqref{eqprof1} yields
\begin{align}\notag
    |V_{t,T}(\phi')-V_{t,T}(\phi)|&\leq h  |V_{t-1,T}(\phi)|+\xi_{t-1,T}(\phi)|\phi'-\phi|+|\phi|\xi_{t-1,T}(\phi)|\phi'-\phi|\\\notag
    &=\big(|V_{t-1,T}(\phi)|+(|\phi|+h)\xi_{t-1,T}(\phi)\big)|\phi'-\phi|\\\label{eqprof3}
    &\coloneqq \xi_{t,T}(\phi)|\phi'-\phi|,
\end{align}
as desired. Next, we use the last equality in \eqref{eqprof3} to show that \eqref{eqcond3} holds. Since $|\phi|\leq \bar\phi<1$, choose $\underline c\in(\bar\phi,1)$. Then, for all $|\phi|\leq\bar\phi$ and all $T$ sufficiently large, it holds $|\phi|+h \leq \underline c <1$.
From \eqref{eqauxiliar}, there exists $M>0$, such that for all $T$ sufficiently large,
\begin{align}\notag
    \E\bigg(\sup_{|\phi|\leq \bar \phi}|\xi_{t,T}(\phi)|^s\bigg)&=\E\bigg(\sup_{|\phi|\leq \bar \phi}\big||V_{t-1,T}(\phi)|+(|\phi|+h)\xi_{t-1,T}(\phi)\big|^s\bigg)\\\notag
    &\leq 2^{s-1}\bigg[\E\bigg(\sup_{|\phi|\leq \bar \phi}|V_{t-1,T}(\phi)|^s\bigg)+\sup_{|\phi|\leq\bar\phi}(|\phi|+h)^s\E\bigg(\sup_{|\phi|\leq \bar \phi} |\xi_{t-1,T}(\phi)|^s\bigg)\bigg]\\\label{eqiter}
    &\leq 2^{s-1}\bigg[M+\underline c^s\E\bigg(\sup_{|\phi|\leq \bar \phi} |\xi_{t-1,T}(\phi)|^s\bigg)\bigg],\qquad \forall t\in[T].
\end{align}
By iterating inequality \eqref{eqiter} and using the condition $\sup_{T\geq 1}\E(\sup_{|\phi|\leq \bar\phi}|\xi_{0,T}(\phi)|^s)\leq C$, we obtain 
\begin{align}\notag
    \E(|\sup_{\phi}\xi_{t,T}(\phi)|^s)&\leq  2^{s-1}\big[M\sum_{k=0}^{t-1}\underline c^{ks}+\underline c^{ts}\E(\sup_\phi |\xi_{0,T}(\phi)|^s)\big]\\
    &\leq C\sum_{k=0}^{\infty}\underline c^{k}=C\frac{1}{1-\underline c}\leq C.
\end{align}
\qed

		\section*{Appendix B: Auxiliary Results}\label{apA}
			
	This appendix collects several auxiliary lemmas (from \ref{l2.1} to \ref{teo1}) used in the proofs presented in Appendix A. For brevity, the complete proofs are deferred to the supplementary material.

The quantity $\hat \Psi (x,\gamma)$ involves a sum over the set of indices $\{i\}_{i=1}^T$. If the kernel function is  supported on $[-L_1, L_1]$, we only need to consider a subset of indices $J_x\subseteq \{1,\dotsc,T\}$, which depends on the point $x\in [0,1]$. It is important to distinguish between $x$ being an interior point and $x$ being a boundary point of $[0,1]$, since the corresponding kernel averages may exhibit different asymptotic equivalences. Analytically, we can examine the behavior of the kernel average ``near" the boundaries instead of exactly at the boundaries. This approach is particularly convenient when evaluating the boundary bias of kernel estimators \citep[see][among others]{muller,wand_jones}. Motivated by these ideas, we will define the set of indices $J_x$ which will play a role for compactly supported kernels. 

Let $T_0\in\N$ and $L_T$ be a sequence of positive numbers (possibly constant) such that  $L_T\geq1$, $L_T h < 1/2$ and $T L_T h > 1$ for all $T\geq T_0$.  For every $T\geq T_0$, define the index set 
\begin{equation}\label{eqb10}
	J_x(L_T)=\{i\in [T]: i/T\in C_x(L_T)\},\qquad x\in[0,1],
\end{equation}
where $[T] := \{1,2,\dots,T\}$ and 
\begin{equation}\label{eqb11}
	C_x(L_T)=\begin{cases} 
		[0,x+ L_Th],&  \text{if }  x\in[0, L_Th]\\
		[x- L_Th,x+L_Th],&  \text{if }  x\in( L_Th,1-L_Th)\\
		[x- L_Th,1],& \text{if }  x\in[1- L_Th,1]
	\end{cases}.
\end{equation}
The construction above guarantees that $C_x$ and $J_x$ are well defined and nonempty. The requirement that $L_T h<1/2$ ensures that $C_x(L_T) \subseteq [0,1]$ for all $x \in [0,1]$. 
Moreover, since the design points $\{i/T:i=1,\dots,T\}$ are equally spaced, any interval in $[0,1]$ of length larger than $1/T$ contains at least one
design point. Because $C_x(L_T)$ has length at least $L_T h$ for all $x\in[0,1]$,
the condition $T L_T h>1$ guarantees that $J_x(L_T)$ is nonempty for every $x\in[0,1]$.

\begin{lem}\label{l2.1}
	Let $T\geq T_0$ and let $k_{x,T}(L_T)$ be the cardinality of $J_x(L_T)$. Assume that $L_T h\to 0$ and $T L_T h\to\infty$. Then, pointwise in $x\in[0,1]$,  $k_{x,T}(L_T)\overset{a}{\approx}c_x TL_T h$ where $c_x=1+I(x\in (0,1))$. Moreover, $\sup_{x\in[0,1]} k_{x,T}(L_T)\leq 2TL_Th+1$.
\end{lem}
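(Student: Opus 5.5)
The proof reduces everything to one elementary counting fact: a closed interval $[a,b]\subseteq[0,1]$ of length $\ell$ contains between $T\ell-1$ and $T\ell+1$ points of the grid $\{i/T: i\in[T]\}$. To see this, note that the number of integers $i$ with $Ta\le i\le Tb$ equals $\lfloor Tb\rfloor-\lceil Ta\rceil+1$. Since $\lfloor Tb\rfloor\in(Tb-1,Tb]$ and $\lceil Ta\rceil\in[Ta,Ta+1)$, this count lies in $(T\ell-1,\,T\ell+1]$. Restricting to $i\ge 1$, which matters only when $a=0$, removes at most the single index $i=0$. Hence
\[
T\ell-2< \#\{i\in[T]: i/T\in[a,b]\}\le T\ell+1 .
\]
I will then apply this to $C_x(L_T)$, whose length I compute case by case from \eqref{eqb11}.

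\textbf{Uniform upper bound.} In the interior case $x\in(L_Th,1-L_Th)$ the length is exactly $2L_Th$. In the left boundary case $x\in[0,L_Th]$ the length is $x+L_Th\le 2L_Th$, and symmetrically in the right boundary case it is $1-x+L_Th\le 2L_Th$. So every $C_x(L_T)$ has length at most $2L_Th$, and the counting fact gives $k_{x,T}(L_T)\le 2TL_Th+1$ for every $x$, with no asymptotics needed. The assumption $L_Th<1/2$ guarantees that these three cases partition $[0,1]$ and that $C_x(L_T)\subseteq[0,1]$.

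\textbf{Pointwise asymptotics.} Fix $x$.
\begin{itemize}
\item If $x\in(0,1)$, then since $L_Th\to0$ we have $x\in(L_Th,1-L_Th)$ for all large $T$. The length is then $2L_Th$, so $|k_{x,T}(L_T)-2TL_Th|\le 2$. Because $TL_Th\to\infty$, this gives $k_{x,T}(L_T)/(2TL_Th)\to1$.
\item If $x=0$, the interval is $[0,L_Th]$ of length $L_Th$, so $k_{x,T}(L_T)=TL_Th+O(1)$ and the ratio with $c_0TL_Th=TL_Th$ tends to one. The case $x=1$ is symmetric, using $[1-L_Th,1]$.
\end{itemize}
This yields $k_{x,T}(L_T)\overset{a}{\approx}c_xTL_Th$ with $c_x=1+I(x\in(0,1))$.

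The only delicate point is the bookkeeping of the floor and ceiling functions and of the excluded index $i=0$. These contribute only $O(1)$ errors, which are negligible relative to $TL_Th\to\infty$. In the uniform bound they are absorbed exactly into the $+1$, because the upper inequality $\lfloor Tb\rfloor-\lceil Ta\rceil+1\le T\ell+1$ holds without any loss.
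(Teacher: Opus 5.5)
Your proof is correct and is essentially the paper's argument: the paper also writes the count as $\lfloor T h_{2,x}\rfloor-\lceil T h_{1,x}\rceil+1$ with $h_{1,x}=\max\{0,x-L_Th\}$ and $h_{2,x}=\min\{1,x+L_Th\}$, bounds it by $T(h_{2,x}-h_{1,x})\pm1$, and then uses that the length is $L_Th$ at $x\in\{0,1\}$, eventually $2L_Th$ for interior $x$, and at most $2L_Th$ uniformly. Your explicit removal of the index $i=0$ when the left endpoint is $0$ is bookkeeping the paper skips: its floor-minus-ceiling formula tacitly counts $i=0$ in that case. This does not invalidate the paper's stated bounds.
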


    In particular, Lemma \ref{l2.1} implies that $\sup_x k_{x,T}(L_T) =O(TL_Th)$.  For our purposes, it is convenient to weaken Assumption A.2 by the following version:
    \begin{itemize}
        \item[A.2'] The function $K:\R\to\R$ satisfies $|K(u)|\leq \bar K<\infty$ and $\int_{\R} \abs{K(u)}du\leq \bar \mu<\infty$. There exist positive constants $\Lambda_1,L_1,L_2<\infty$ such that  $K(u)=0$ for $|u|>L_1$, and $|K(u)-K(u')|\leq \Lambda_1 |u-u'|$ for all $u,u'\in [-L_1, L_1]$.
    \end{itemize}
    Assumption A.2' is strictly weaker than A.2 since the Lipschitz property is required only on $[-L_1,L_1]$, in the former, instead of on the whole real line.

\begin{lem}\label{lext}
Assume that $K$ satisfies Assumption A.2' for a fixed $j\geq 0$. Then there exists a positive constant
$C_j<\infty$ such that
\begin{equation*}
    \sup_{u\in\R} |u|^j |K(u)| \leq C_j.
\end{equation*}
\end{lem}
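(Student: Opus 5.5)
The plan is to split $\R$ into the region $|u|\le L_1$ and its complement, and to bound $|u|^j|K(u)|$ separately on each piece using only what Assumption A.2' literally provides: boundedness $|K|\le\bar K$ and vanishing of $K$ for $|u|>L_1$. The Lipschitz clause of A.2' holds only on $[-L_1,L_1]$, and the integrability clause is not needed. So the argument must not use any regularity of $K$ outside the support interval. In particular it must not appeal to the global Lipschitz property of A.2, which is exactly what A.2' drops. The constant will be $C_j\coloneqq L_1^j\bar K$, interpreting $L_1^0=1$ so that the case $j=0$ reduces to $\sup_u|K(u)|\le\bar K$.

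\emph{Step 1 (the compact piece).} For $|u|\le L_1$, monotonicity of $t\mapsto t^j$ on $[0,\infty)$ gives $|u|^j\le L_1^j$. Combined with $|K(u)|\le \bar K$ this yields $|u|^j|K(u)|\le L_1^j\bar K$. Nothing beyond boundedness is used, so it is irrelevant whether $K$ is continuous or Lipschitz there.

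\emph{Step 2 (the tail).} For $|u|>L_1$, A.2' states $K(u)=0$, so $|u|^j|K(u)|=0$ however large $|u|^j$ is. Taking the supremum over both regions gives $\sup_{u\in\R}|u|^j|K(u)|\le \max\{L_1^j\bar K,0\}=C_j<\infty$. The constant depends on $j$ only through $L_1^j$, which is what is needed in \eqref{eqponte} and in the bound $|Z_{i,T}|\le 2C_j\tau_T$.

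\emph{Main obstacle.} The only delicate point is Step 2. One might hope to replace the support clause by the remaining conditions (bounded, integrable, Lipschitz), but I expect this to fail, so the proof should lean on the vanishing of $K$ for $|u|>L_1$ as stated in A.2'. A check of this: place Lipschitz bumps of height $1/n$ and width of order $1/(n\Lambda_1)$ at the points $u=e^n$. The resulting kernel is bounded, globally Lipschitz and integrable, because $\sum_n n^{-2}<\infty$, yet $|u|^j|K(u)|\approx e^{nj}/n\to\infty$. So the statement genuinely uses the tail condition in A.2', and its gain over A.2 is only that no smoothness outside $[-L_1,L_1]$ is required. This is precisely what Steps 1--2 exploit.
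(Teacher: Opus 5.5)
Your proposal is correct and is the same argument the paper gives: under A.2' one has $|u|^j|K(u)|\leq L_1^j\bar K$, because $K$ is bounded by $\bar K$ and vanishes for $|u|>L_1$, so $C_j=L_1^j\bar K$ works. Your remark about the bump construction for $j\geq 1$ is a correct aside, but the proof does not need it.
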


Under Assumption A.2',  $|u|^j |K(u)|\leq L_1^j \bar K$, and Lemma \ref{lext} follows immediately.
 The next lemma establishes a uniform approximation of integrals by finite sums.

\begin{lem}\label{l2}
	Suppose that the kernel function $K$ satisfies Assumption A.2'. Define the composition $f=g\circ K$, where $g$ is Lipschitz continuous on the range of $K$ satisfying $g(0)=0$. Then, for any fixed $j\in\N$ and any $0\leq a\leq b\leq 1$, it holds uniformly in $x\in[0,1]$ for all sufficiently large $T$  that
	\begin{equation}\label{versao1}
		\abs[\bigg]{\frac{1}{T}\sum_{i: i/T\in[a,b]} f\parent[\bigg]{\frac{i/T-x}{h}}\parent[\bigg]{\frac{i/T-x}{h}}^j-\int_{a}^{b} f\parent[\bigg]{\frac{u-x}{h}}\parent[\bigg]{\frac{u-x}{h}}^j du}\leq \frac{C}{T},
	\end{equation}
    and
    \begin{equation}\label{versao2}
		\abs[\bigg]{\frac{1}{T}\sum_{i: i/T\in[a,b]} f\parent[\bigg]{\frac{i/T-x}{h}}\abs[\bigg]{\frac{i/T-x}{h}}^j-\int_{a}^{b} f\parent[\bigg]{\frac{u-x}{h}}\abs[\bigg]{\frac{u-x}{h}}^j du}\leq \frac{C}{T}.
	\end{equation}
\end{lem}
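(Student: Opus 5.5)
The plan is the classical Riemann-sum error estimate on the equally spaced grid, with the integrand supported in a window of width $O(h)$. Write $\psi_x(u)\coloneqq f((u-x)/h)\,((u-x)/h)^j$ for the summand in \eqref{versao1}; the case \eqref{versao2} is identical with $|\cdot|^j$ in place of $(\cdot)^j$.

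First I will record the properties of $\psi_x$ that drive everything. Since $g$ is Lipschitz with $g(0)=0$ and $K$ vanishes outside $[-L_1,L_1]$, $f=g\circ K$ also vanishes outside $[-L_1,L_1]$ and is bounded by $L_g\bar K$. On $[-L_1,L_1]$ it is Lipschitz with constant $L_g\Lambda_1$. The polynomial factor $w\mapsto w^j$ (or $|w|^j$) is bounded by $L_1^j$ and Lipschitz with constant $jL_1^{j-1}$ there. Hence $F(w)\coloneqq f(w)w^j$ is bounded by a constant $C_F$, is supported in $[-L_1,L_1]$, and is Lipschitz on $[-L_1,L_1]$ with a constant $\Lambda_F$ depending only on $j,L_1,\bar K,\Lambda_1,L_g$.

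One subtlety is that under A.2$'$ the function $K$ is Lipschitz only inside the support and may jump at $\pm L_1$. I will therefore treat the at most two cells containing the endpoints of the support separately, each contributing at most $C_F/T$.

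Next I will partition $[a,b]$ into the cells $I_i=((i-1)/T,i/T]$ for $i/T\in[a,b]$, together with at most two boundary pieces near $a$ and $b$ of length $\le 1/T$. Each boundary piece contributes at most $2C_F/T$ to the difference between sum and integral, since both the summand term and the integral over a piece of length $\le 1/T$ are bounded by $C_F/T$. For each interior cell I compare $T^{-1}\psi_x(i/T)$ with $\int_{I_i}\psi_x(u)\,du$. If $I_i$ lies where $(u-x)/h\in[-L_1,L_1]$, the Lipschitz bound gives an error of at most
\[
\int_{I_i}|\psi_x(i/T)-\psi_x(u)|\,du\le \frac{\Lambda_F}{hT}\cdot\frac1T .
\]
If the cell lies entirely outside the support, the error is zero. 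There are at most two straddling cells, each with error $\le 2C_F/T$.

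Finally I will count the cells. By Lemma \ref{l2.1} (with $L_T=L_1$), the number of cells meeting the support is at most $2TL_1h+1$. Summing,
\[
\Big|\tfrac1T\sum-\int\Big|\le (2TL_1h+1)\frac{\Lambda_F}{hT^2}+\frac{8C_F}{T}\le \frac{2L_1\Lambda_F+8C_F}{T}+\frac{\Lambda_F}{hT^2}.
\]
The last term is $o(1/T)$ because $Th\to\infty$, so for all sufficiently large $T$ the bound is $C/T$, uniformly in $x$ and in $a\le b$.

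The main obstacle is exactly this bookkeeping: the factor $1/h$ from the Lipschitz constant of $\psi_x$ must be cancelled by the $O(Th)$ cell count. This works only because of the compact support, so the support truncation and the discontinuities of $K$ at $\pm L_1$ have to be handled carefully.
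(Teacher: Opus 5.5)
Your proposal is correct and follows essentially the same route as the paper. Both arguments split off the $O(1/T)$ end pieces at $a$ and $b$, bound each grid cell lying inside the window $|(u-x)/h|\le L_1$ by $O(1/(T^2h))$ using the Lipschitz property (the paper uses the mean value theorem for integrals), multiply by the $O(Th)$ such cells, and bound the few boundary cells crudely by $O(1/T)$. Your handling of two straddling cells is in fact slightly more careful than the paper's: it counts only one boundary index in $\partial J^*_{x,T}$ and so misses the cell whose right endpoint falls just outside the window (summand zero, integral nonzero), though this only changes the constant.
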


The integral approximation in Lemma \ref{l2} is applied for Lipschitz transformations of $K(u)$, such as $|K(u)|$. 

\begin{lem} \label{l3}
	Let $K$ be a kernel function satisfying Assumption A.2 and let $\delta>0$. Then
	there exist a function $K^*$ and constants $L, \bar K^*,\mu^*$ such that  $\abs{K^*}\leq \bar K^*<\infty$, $\int_{\R}\abs{K^*(u)}du\leq\mu^*<\infty$ and
	\begin{equation*}
		\abs{x_1-x_2}\leq\delta\leq L_1\implies \abs{K(x_1)-K(x_2)}\leq \delta K^*(x_1),\quad \forall x_1,x_2\in\R.
	\end{equation*} 
	In particular, we may take
    \begin{equation}\label{eqker1}
        K^*(x)=\Lambda_1 I(\abs{x}\leq 2L_1).
    \end{equation}
     Define the composition $f^*=g\circ K^*$, where $g$ is Lipschitz continuous on the range of $K^*$, satisfying $g(0)=0$. Then, for $K^*$ defined as in \eqref{eqker1},  for any fixed $j\in\N$ and any $0\leq a\leq b\leq 1$, it holds uniformly in $x\in[0,1]$ for all sufficiently large $T$ that
	\begin{equation}\label{eqaprox}
		\abs[\bigg]{\frac{1}{T}\sum_{i: i/T\in[a,b]} f^*\parent[\bigg]{\frac{i/T-x}{h}}\parent[\bigg]{\frac{i/T-x}{h}}^j-\int_{a}^{b}f^*\parent[\bigg]{\frac{u-x}{h}}\parent[\bigg]{\frac{u-x}{h}}^j du}\leq \frac{C}{T},
	\end{equation}
    and
    \begin{equation}\label{eqaprox2}
		\abs[\bigg]{\frac{1}{T}\sum_{i: i/T\in[a,b]} f^*\parent[\bigg]{\frac{i/T-x}{h}}\abs[\bigg]{\frac{i/T-x}{h}}^j-\int_{a}^{b}f^*\parent[\bigg]{\frac{u-x}{h}}\abs[\bigg]{\frac{u-x}{h}}^j du}\leq \frac{C}{T}.
	\end{equation}
\end{lem}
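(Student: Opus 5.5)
The statement has two parts. The pointwise Lipschitz-type bound comes from the compact support and Lipschitz property of $K$ in Assumption A.2. The integral approximations \eqref{eqaprox}--\eqref{eqaprox2} will be obtained by reducing to the argument behind Lemma \ref{l2}.

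For the first part, I take $K^*(x)=\Lambda_1 I(|x|\le 2L_1)$. Clearly $|K^*|\le \Lambda_1=:\bar K^*$ and $\int|K^*|=4L_1\Lambda_1=:\mu^*$. Let $|x_1-x_2|\le\delta\le L_1$ and split into cases.

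\emph{Case $|x_1|\le 2L_1$.} Then $K^*(x_1)=\Lambda_1$, and the global Lipschitz bound gives $|K(x_1)-K(x_2)|\le\Lambda_1|x_1-x_2|\le \delta K^*(x_1)$.

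\emph{Case $|x_1|>2L_1$.} Then $|x_2|\ge |x_1|-\delta>2L_1-L_1=L_1$. Hence $K(x_1)=K(x_2)=0$ by the support condition, and the inequality holds trivially.

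This is why the support of $K^*$ is doubled: it absorbs the shift $\delta\le L_1$.

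For \eqref{eqaprox}--\eqref{eqaprox2}, I will not invoke Lemma \ref{l2} directly. $K^*$ is discontinuous at $\pm 2L_1$, so it is not Lipschitz and does not satisfy A.2'. I will instead redo the Riemann-sum argument, exploiting that $f^*=g\circ K^*$ is piecewise constant. Since $g(0)=0$, we have $f^*(u)=g(\Lambda_1)I(|u|\le 2L_1)$. The sum then reduces to
\begin{equation*}
g(\Lambda_1)\,T^{-1}\sum_{i:\,i/T\in[a,b]\cap[x-2L_1h,\,x+2L_1h]}\Big(\frac{i/T-x}{h}\Big)^j ,
\end{equation*}
and the integral to the same expression with the sum replaced by an integral over the interval $I_x=[a,b]\cap[x-2L_1h,x+2L_1h]$. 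On $I_x$ the function $\psi(u)=((u-x)/h)^j$ (or $|\cdot|^j$) is bounded by $(2L_1)^j$. Its derivative is bounded by $j(2L_1)^{j-1}/h$; for the absolute-value version this holds almost everywhere, and the function is Lipschitz there.

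I then partition $I_x$ into cells $[(i-1)/T,i/T]$. On each full cell the sum-versus-integral error is at most $T^{-1}\cdot T^{-1}\,j(2L_1)^{j-1}/h$. There are at most $4L_1Th+2$ cells (cf. Lemma \ref{l2.1}), so the total is $O(T^{-1})$, since the factor $h$ from counting cancels the $1/h$. The at most two boundary cells and the endpoint mismatch contribute at most $2(2L_1)^j/T$. Both bounds are uniform in $x$, $a$ and $b$, which yields the $C/T$ bound.

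The main obstacle is this last counting step. One must check that the number of cells times the per-cell error stays $O(1/T)$ rather than $O(1/(Th))$. This rests on the product structure (cell width $1/T$) $\times$ (Lipschitz constant $\propto 1/h$) $\times$ (number of cells $\propto Th$). I would verify it carefully, and the indicator discontinuity only costs $O(1/T)$ at the two edges.
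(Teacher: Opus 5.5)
Your proof is correct. The first part coincides with the paper's argument: you use the same $K^*(x)=\Lambda_1 I(|x|\le 2L_1)$ and the same case split, where $|x_1|>2L_1$ together with $\delta\le L_1$ forces both points outside the support.

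For \eqref{eqaprox}--\eqref{eqaprox2} you take a different route, but your stated reason for avoiding Lemma \ref{l2} is a misreading. A.2' requires the Lipschitz bound only on the support interval $[-L_1,L_1]$, not on all of $\R$; that is exactly how it weakens A.2. Take $2L_1$ as the support constant. Then $K^*$ vanishes outside $[-2L_1,2L_1]$ and is constant on it, so it does satisfy A.2'. The paper's proof consists of making this observation and then invoking Lemma \ref{l2} for $f^*=g\circ K^*$.

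Your route writes $f^*=g(\Lambda_1)I(|u|\le 2L_1)$ and absorbs the indicator into the closed interval $I_x=[a,b]\cap[x-2L_1h,x+2L_1h]$. You then carry out the Riemann-sum estimate for the Lipschitz factor $((u-x)/h)^j$, or its absolute value, on $I_x$. The bookkeeping is sound and uniform in $x$, $a$ and $b$:
\begin{itemize}
\item there are $O(Th)$ full cells, each with error $O(1/(T^2h))$;
\item there are $O(1)$ edge cells, each costing $O(1/T)$;
\item $1/(T^2h)=o(1/T)$ because $Th\to\infty$.
\end{itemize}

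The paper's reduction is shorter and reuses the general lemma. Yours is self-contained and handles the jumps of $K^*$ explicitly, by allowing a partial cell at both ends of the support. This is slightly more careful than the supplementary proof of Lemma \ref{l2}. There, when the integral is restricted to the cells whose right endpoint lies in the support, only the left boundary cell is counted. The cell straddling the right end of the support is tacitly dropped; this is harmless, since it contributes at most $C/T$.
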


\begin{coroll}\label{coro2}
Suppose that the conditions of Lemmas~\ref{l2} and~\ref{l3} hold. Let $K^*$ be given by \eqref{eqker1}, and define $f_{L_T}(u)\coloneqq f(u)I(|u|>L_T)$ and  $f_{L_T}^*(u)\coloneqq f^*(u)I(|u|>L_T)$. Then, for any fixed $j\in\N$, it holds
uniformly in $x\in[0,1]$ for all sufficiently large $T$ that
\begin{align}\label{eqcorol1}
\bigg|
\frac{1}{T}\sum_{i=1}^T
f_{L_T}\bigg(\frac{i/T-x}{h}\bigg)
\bigg(\frac{i/T-x}{h}\bigg)^j-\int_0^1
f_{L_T}\bigg(\frac{u-x}{h}\bigg)
\bigg(\frac{u-x}{h}\bigg)^j
du
\bigg|
&\leq \frac{C}{T}, 
\\\label{eqcorol2}
\bigg|
\frac{1}{T}\sum_{i=1}^T
f_{L_T}^*\bigg(\frac{i/T-x}{h}\bigg)
\bigg(\frac{i/T-x}{h}\bigg)^j-\int_{0}^1
f_{L_T}^*\bigg(\frac{u-x}{h}\bigg)
\bigg(\frac{u-x}{h}\bigg)^j
du
\bigg|
&\leq \frac{C}{T}.
\end{align}
\end{coroll}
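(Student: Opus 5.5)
The plan is to write each truncated sum as a full-range sum minus a central part, and then apply Lemma \ref{l2} and Lemma \ref{l3} to each piece. For \eqref{eqcorol1} I use the decomposition
\begin{equation*}
f_{L_T}(u)=f(u)-f(u)I(|u|\leq L_T).
\end{equation*}
The full-range sum $T^{-1}\sum_{i=1}^T f((i/T-x)/h)((i/T-x)/h)^j$ is covered directly by Lemma \ref{l2} with $[a,b]=[0,1]$. It is approximated by $\int_0^1 f((u-x)/h)((u-x)/h)^j\,du$ within $C/T$, uniformly in $x$. This uses that $i/T\in[0,1]$ for all $i\in[T]$; the point $u=0$ carries no design point, but that causes no difficulty because Lemma \ref{l2} is stated with sums over $i/T\in[a,b]$.

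For the central part, note that $|(i/T-x)/h|\leq L_T$ holds if and only if $i/T\in[x-L_Th,\,x+L_Th]$. Intersecting with $[0,1]$ gives an interval $[a_x,b_x]\subseteq[0,1]$ with $a_x=\max\{0,x-L_Th\}$ and $b_x=\min\{1,x+L_Th\}$; this is exactly $C_x(L_T)$ from \eqref{eqb11}. I then apply Lemma \ref{l2} with $[a,b]=[a_x,b_x]$ to this truncated sum and its integral. The crucial point is that Lemma \ref{l2} is stated for \emph{any} $0\leq a\leq b\leq1$ with a constant $C$ independent of $a,b$. One therefore has to check in its proof that $C$ does not depend on the interval. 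The argument there is a Riemann-sum error: there are $O(Th)$ nonzero cells of width $1/T$, and on each the integrand $u\mapsto f((u-x)/h)((u-x)/h)^j$ varies by at most $C/(Th)$, by the Lipschitz property of $g\circ K$ on the compact support and boundedness of $|w|^j$ there. On top of this come two endpoint cells, each contributing $O(1/T)$ because the integrand is bounded by $C_j$ (Lemma \ref{lext}). Both contributions are uniform in $a,b$, so taking $a=a_x$, $b=b_x$, which depend on $x$, preserves uniformity in $x$. Subtracting the two approximations gives \eqref{eqcorol1} with constant $2C$.

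For \eqref{eqcorol2} the argument is identical, with $f^*=g\circ K^*$ and the approximation \eqref{eqaprox} from Lemma \ref{l3} in place of Lemma \ref{l2}. The only new feature is that $K^*=\Lambda_1I(|x|\leq 2L_1)$ is a step function rather than Lipschitz. Lemma \ref{l3} already handles this for arbitrary $[a,b]$ (the jump points of $K^*$ add only $O(1/T)$ boundary cells), so the same subtraction applies.

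The main obstacle is verifying that the constants in Lemmas \ref{l2}--\ref{l3} are genuinely uniform over the interval endpoints $a,b$. If they are not, I would instead re-run the Riemann-sum bound directly on the function $u\mapsto f_{L_T}(u)u^j$. This function is piecewise Lipschitz with at most two additional jumps, at $u=\pm L_T$, and each jump costs at most $C_j/T$ since $|f|\leq \mathrm{Lip}(g)\bar K$. This yields the $C/T$ bound regardless of $L_T$.
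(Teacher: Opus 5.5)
Your argument is correct. It differs from the paper's only in how you split the truncated index set. The paper writes $[0,1]\setminus C_x(L_T)$ as the union of the two half-open pieces $I_{1,x,T}=[0,\max\{0,x-L_Th\})$ and $I_{2,x,T}=(\min\{1,x+L_Th\},1]$, either of which may be empty. It then replaces each nonempty piece by its closure, at the cost of at most one extra design point, i.e.\ $O(1/T)$, and applies Lemma~\ref{l2} to each closed piece. You instead use $I(|u|>L_T)=1-I(|u|\leq L_T)$ and apply Lemma~\ref{l2} once on $[0,1]$ and once on $C_x(L_T)=[\max\{0,x-L_Th\},\min\{1,x+L_Th\}]$. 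Since that interval is already closed and always contains $x$, your inclusion--exclusion route needs no closure step and no case distinction for empty pieces near the boundary of $[0,1]$. The paper's partition, by contrast, works directly with the sum over the complement.

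Both routes rest on the point you single out: the constant in Lemma~\ref{l2} (and in \eqref{eqaprox}) must not depend on the endpoints $a,b$. The paper uses this tacitly when it applies the lemma to the $x$-dependent intervals $\bar I_{k,x,T}$. Its proof of Lemma~\ref{l2} does deliver it, because the endpoint terms are bounded by $C_j/T$ and the interior cells are counted through $\sup_x\#J_x(L_1)=O(Th)$, neither of which involves $a$ or $b$. Your fallback would also work and is self-contained: a direct Riemann-sum bound for $u\mapsto f_{L_T}(u)u^j$, where the two extra jumps at $\pm L_T$ each cost $O(1/T)$.
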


Consider the quantities related to kernel smoothing estimation (e.g., Nadaraya-Watson and local linear estimators),
\begin{equation*}
      s_{T,j}(x)\coloneqq
  \frac{1}{Th}\sum_{t=1}^{T}
  \left(\frac{t/T-x}{h}\right)^{j}
  K\!\left(\frac{t/T-x}{h}\right),\qquad j\in\N.
\end{equation*}
The following lemma extends Proposition 1 in \cite{fernandez}.
\begin{lem}\label{l5}
	Under Assumption A.2 with $L_1=1$, for any fixed $j\in\N$, it holds that
	\begin{equation*}
		\sup_{x\in[0,1]}\big|s_{T,j}(x)-\mu_j(x)\big|=O\bigg(\frac{1}{Th}\bigg), 
	\end{equation*}
	 where $\mu_j(x)=\int_{G_x} u^jK(u)du$ with 
	\begin{equation}\label{eqsetg}
		G_x=\begin{cases}
	[-1,0],& \text{if } x= 1\\
	[-1,1],          & \text{if } x\in(0,1)\\
	[0,1], & \text{if  } x=0
\end{cases}.
\end{equation}
\end{lem}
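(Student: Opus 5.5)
The plan is to reduce $s_{T,j}(x)$ to a deterministic integral using the sum-to-integral approximation of Lemma \ref{l2}, and then compare that integral with $\mu_j(x)$ by a change of variables. The comparison splits into cases according to where $x$ lies relative to the boundary layers $[0,h)$ and $(1-h,1]$. One point must be flagged at the outset: in the layers $x\in(0,h)\cup(1-h,1)$ the comparison does not give $O(1/(Th))$, so the argument below proves the stated bound only away from those layers.

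\emph{Step 1 (sum to integral).} Take $f=K$, so $g(u)=u$, which is Lipschitz with $g(0)=0$. Take $a=0$ and $b=1$ in \eqref{versao1}. Multiplying by $1/h$ gives, uniformly in $x\in[0,1]$,
\begin{equation*}
s_{T,j}(x)=\frac1h\int_0^1 K\Big(\frac{u-x}{h}\Big)\Big(\frac{u-x}{h}\Big)^j du+O\Big(\frac{1}{Th}\Big)=\int_{-x/h}^{(1-x)/h} w^jK(w)\,dw+O\Big(\frac{1}{Th}\Big).
\end{equation*}
The second equality is the substitution $w=(u-x)/h$. The only care needed is that the constant $C$ in Lemma \ref{l2} does not depend on $x$; this holds because $K$ is supported on $[-1,1]$ and is Lipschitz there, so the number of grid cells meeting the support is $O(Th)$ (Lemma \ref{l2.1}).

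\emph{Step 2 (matching $G_x$).} Since $\operatorname{supp}K\subseteq[-1,1]$, the integral in Step 1 equals $\int_{[-x/h,(1-x)/h]\cap[-1,1]}w^jK(w)\,dw$. The cases are:
\begin{itemize}
\item If $x\in[h,1-h]$, then $[-x/h,(1-x)/h]\supseteq[-1,1]$, so the integral equals $\mu_j(x)$ with $G_x=[-1,1]$ exactly.
\item If $x=0$, the interval is $[0,1/h]\cap[-1,1]=[0,1]=G_0$.
\item If $x=1$, the interval is $[-1,0]=G_1$.
\end{itemize}
On the set $\{0,1\}\cup[h,1-h]$ the integral therefore coincides with $\mu_j(x)$, and Step 1 gives the $O(1/(Th))$ bound uniformly there.

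\emph{Step 3 (boundary layer, the main obstacle).} For $x\in(0,h)$ the difference is
\begin{equation*}
\mu_j(x)-\int_{-x/h}^{1}w^jK(w)\,dw=\int_{-1}^{-x/h}w^jK(w)\,dw,
\end{equation*}
and the case $x\in(1-h,1)$ is symmetric. This difference is a fixed function of $x/h$ and is generally nonzero. For example, at $x=h/2$ it is of order one whenever $K>0$ near $-1/2$, which Assumption A.4 allows. So the literal uniform statement cannot be obtained from this route in the layer.

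My first attempt would be to look for a reading under which the claim survives: either the paper's convention that "$x\in(0,1)$" in \eqref{eqsetg} is meant asymptotically for fixed $x$, or restricting the supremum to $\I_T$ with $b_T\geq h$, which is how the lemma is actually used for $\hat\phi$ in Theorem \ref{teo4}. On either reading, Steps 1 and 2 give the stated rate. If neither reading is intended, the bound that remains valid over all of $[0,1]$ is the one with the $h$-dependent effective moment $\int_{-x/h}^{(1-x)/h}w^jK(w)\,dw$ from Step 1.
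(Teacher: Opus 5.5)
Your Steps 1--2 follow the paper's proof exactly: Lemma~\ref{l2} with $f=K$ on $[0,1]$, division by $h$, the substitution $w=(u-x)/h$, and $\supp K\subseteq[-1,1]$.

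Your Step 3 objection is also correct. It points to a gap in the paper's argument, not in yours. The paper ends with ``the last integral equals $\mu_j(x)$ for all sufficiently large $T$''. That step needs $h\le\min\{x,1-x\}$, a threshold that depends on $x$. It therefore gives the limit for each fixed $x$ but does not control the supremum. On $(0,h)$ the discrepancy $\int_{-1}^{-x/h}w^jK(w)\,dw$ is a nonvanishing function of $x/h$. For the Epanechnikov kernel with $j=0$ it equals $5/32$ at $x=h/2$. So the uniform $O(1/(Th))$ claim is false as stated. What does hold uniformly on $[0,1]$ is your Step 1 bound with $\int_{-x/h}^{(1-x)/h}w^jK(w)\,dw$ in place of $\mu_j(x)$; the stated bound holds on $\{0,1\}\cup[h,1-h]$.

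One correction to your fallback: restricting to $\I_T$ is not how this lemma is used. Its only role is in Lemma~\ref{l6}, the uniform lower bound on $\lambda_{\min}(S_{T,x})$. That bound is needed over all of $[0,1]$, boundary layers included, for $\sup_{x\in[0,1]}|\hat g(x)-g(x)|$ in Theorem~\ref{teo4}. The $\hat\phi$ part uses Lemma~\ref{l2} directly.

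The repair is your $h$-dependent version.
\begin{itemize}
\item Set $\tilde S_{x,h}\coloneqq\int_{-x/h}^{(1-x)/h}\begin{pmatrix}1&w\\ w&w^2\end{pmatrix}K(w)\,dw$. Step 1 gives $\sup_x\|S_{T,x}-\tilde S_{x,h}\|=O(1/(Th))$.
\item Since $K\ge0$, the integrand is positive semidefinite. For $h\le1/2$, the interval $[-x/h,(1-x)/h]$ contains $[0,1]$ when $x\le1/2$ and $[-1,0]$ when $x\ge1/2$. Hence $\tilde S_{x,h}\succeq S_0$ or $\tilde S_{x,h}\succeq S_1$, and both $S_0$ and $S_1$ are positive definite under A.4.
\item Weyl's inequality then gives $\inf_{x\in[0,1]}\lambda_{\min}(S_{T,x})\ge\min\{\lambda_{\min}(S_0),\lambda_{\min}(S_1)\}-O(1/(Th))$.
\end{itemize}
This is all that Lemma~\ref{l6} and the subsequent arguments need.
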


 Lemma \ref{l5} implies that the matrix  $S_{T,x}$ defined in \eqref{eq13_} converges to $S_x$ as $T\to\infty$, where 
\begin{equation}\label{eqb13}
	S_x=\int_{G_x} \left[\begin{array}{cc} 1& u\\
		u&u^2
	\end{array}\right] K(u)du.
\end{equation} 

\begin{lem}\label{l6}
Let $K\ge0$ satisfy Assumption A.2 with $L=1$, and define $S_x$ as in \eqref{eqb13}.
If $\mu(\{u\in G_x:K(u)>0\})>0$, then $S_x$ is positive definite. Moreover, there exist
$\lambda_0>0$ and $T_0<\infty$ such that
\begin{equation}\label{eqlmin}
\min_{x\in[0,1]}\{\lambda_{\min}(S_{T,x}) \}\geq\lambda_0,\qquad \forall\,T\ge T_0,
\end{equation}
where $\lambda_{\min}(S_{T,x})$ denotes the smallest eigenvalue of $S_{T,x}$.
\end{lem}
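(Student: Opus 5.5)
The statement to prove is Lemma \ref{l6}. The plan has two parts: show positive definiteness of the limit matrix $S_x$ for each fixed $x$, then transfer a uniform eigenvalue bound to $S_{T,x}$ using Lemma \ref{l5}. The difficulty is that $x\mapsto S_x$ takes only three distinct values, according to the form of $G_x$ in \eqref{eqsetg}. So "uniform in $x$" reduces to a minimum over three matrices, and the real work is controlling the approximation error uniformly.

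\emph{Positive definiteness.} Fix $x$ and a vector $v=(v_1,v_2)^\intercal\neq 0$. Then
\begin{equation*}
v^\intercal S_x v=\int_{G_x}(v_1+v_2u)^2K(u)\,du\geq 0,
\end{equation*}
because $K\geq 0$. Suppose equality held. Then $(v_1+v_2u)^2K(u)=0$ for almost every $u\in G_x$. The linear polynomial $v_1+v_2u$ is nonzero except at at most one point. Hence $K=0$ almost everywhere on $G_x$, which contradicts $\mu(\{u\in G_x:K(u)>0\})>0$. Therefore $S_x$ is positive definite. Since $G_x$ takes only the three values $[0,1]$, $[-1,1]$ and $[-1,0]$, we can set
\begin{equation*}
2\lambda_0:=\min_{x\in[0,1]}\lambda_{\min}(S_x)>0,
\end{equation*}
a minimum of three positive numbers.

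\emph{Transfer to $S_{T,x}$.} By Lemma \ref{l5} with $j\in\{0,1,2\}$, every entry of $S_{T,x}-S_x$ is $O(1/(Th))$ uniformly in $x$. Because $Th\to\infty$, this gives $\sup_x\|S_{T,x}-S_x\|_F\to 0$. Weyl's inequality, $|\lambda_{\min}(A)-\lambda_{\min}(B)|\leq\|A-B\|_2\leq\|A-B\|_F$ for symmetric $A,B$, then yields $\lambda_{\min}(S_{T,x})\geq 2\lambda_0-o(1)\geq\lambda_0$ uniformly in $x$ for all $T\geq T_0$.

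\emph{Expected obstacle.} The delicate point is the case of $x$ near, but not equal to, the boundary. Here $G_x=[-1,1]$, while the kernel effectively sees only part of its support. Consequently the sums $s_{T,j}(x)$ cannot converge uniformly to the piecewise constant $\mu_j(x)$. For example, at $x=h/2$ the sum $s_{T,0}(x)$ is close to $\int_{-1/2}^{1}K$, not to $\int_{-1}^{1}K$.

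If Lemma \ref{l5} is read literally, the argument above goes through. If it turns out to be too strong as stated, I would instead handle the problem directly. The approximation by sums (Lemma \ref{l2}) gives
\begin{equation*}
S_{T,x}=S^{(c)}_{x}+O\!\left(\tfrac{1}{Th}\right),\qquad S^{(c)}_{x}=\int_{-x/h}^{(1-x)/h}X(u)X(u)^\intercal K(u)\,du,
\end{equation*}
uniformly in $x$, where $X(u)=(1,u)^\intercal$. The integration range $[-x/h,(1-x)/h]\cap[-1,1]$ always contains $[0,1]$ or $[-1,0]$, since $h<1/2$. By monotonicity of the integrand $(v_1+v_2u)^2K(u)\geq 0$ in the domain, we get
\begin{equation*}
v^\intercal S^{(c)}_x v\geq\min\Big\{\int_{0}^{1}(v_1+v_2u)^2K(u)\,du,\ \int_{-1}^{0}(v_1+v_2u)^2K(u)\,du\Big\}.
\end{equation*}
Each of these is bounded below by a positive multiple of $\|v\|^2$, by the argument in the first part applied at $x=0$ and $x=1$. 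This gives a uniform $2\lambda_0$ for $S^{(c)}_x$, and the Weyl step above completes the proof.
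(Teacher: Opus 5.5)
Your first route is the paper's own proof. The paper proves positive definiteness via $\int_{G_x}(v_1+v_2u)^2K(u)\,du$ exactly as you do. It then asserts ``by Lemma~\ref{l5}, $S_{T,x}\to S_x$ uniformly'' and concludes with $\lambda_{\min}(S_{T,x})\geq \det(S_{T,x})/\mathrm{tr}(S_{T,x})$; you use Weyl's inequality instead, which makes no difference.

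Your suspicion about this route is correct. The uniform claim in Lemma~\ref{l5} is false: its proof only shows $\int_{-x/h}^{(1-x)/h}K(w)w^j\,dw=\mu_j(x)$ for $T$ large depending on $x$. At $x=h/2$ one has $|s_{T,0}(x)-\mu_0(x)|\to\int_{-1}^{-1/2}K(w)\,dw$, which is positive for the Epanechnikov kernel, for example. The paper also claims that $\det(S_x)$ and $\mathrm{tr}(S_x)$ are continuous in $x$. That is false, because $x\mapsto S_x$ jumps at $0$ and $1$, though it is harmless for taking extrema over three values. The uniform-convergence step, however, is a genuine gap in the paper's argument. So drop the ``read literally'' branch and present your fallback as the proof.

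The fallback is correct and takes a genuinely different route. You compare $S_{T,x}$ not with the piecewise-constant $S_x$ but with $S^{(c)}_x=\int_{-x/h}^{(1-x)/h}X(u)X(u)^\intercal K(u)\,du$. For this matrix, Lemma~\ref{l2} really does give an $O(1/(Th))$ error uniformly in $x$: apply it with $g(u)=u$, $[a,b]=[0,1]$ and $j\in\{0,1,2\}$, then divide by $h$. The extra $i=0$ term in that lemma's sum is also $O(1/(Th))$.

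Since $K\geq 0$ and $h<1/2$, either $S^{(c)}_x\succeq S_0$ or $S^{(c)}_x\succeq S_1$, where $S_0=\int_0^1XX^\intercal K$ and $S_1=\int_{-1}^0XX^\intercal K$. Hence $\lambda_{\min}(S^{(c)}_x)\geq\min\{\lambda_{\min}(S_0),\lambda_{\min}(S_1)\}$. This equals your $2\lambda_0$ from the first part, because $S_x\succeq S_0$ and $S_x\succeq S_1$ for interior $x$. Weyl's inequality then finishes the proof.

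Beyond handling points within $O(h)$ of the endpoints, your argument shows exactly which hypothesis is used: $K>0$ on sets of positive measure in both $[0,1]$ and $[-1,0]$, i.e. the condition at $x=0$ and $x=1$. This condition is also necessary, since if $K=0$ a.e. on $[0,1]$ then every entry of $S_{T,0}$ is $O(1/(Th))$.
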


Throughout the next lemma, $\|\cdot\|$ stands for the Euclidean ($L_2$) norm in $\mathbb{R}^2$.

\begin{lem}\label{l7}
Assume the conditions of Lemma \ref{l6} hold. Then, for all $v\in\R^2$ and all $T$ sufficiently large,
\begin{equation*}
    \sup_{x\in[0,1]}\|S_{T,x}^{-1}v\|\leq\frac{1}{\lambda_0}\|v\|.
\end{equation*}
\end{lem}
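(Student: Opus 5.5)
The strategy is to reduce the operator-norm bound to the uniform eigenvalue bound of Lemma \ref{l6}. Fix $T\geq T_0$ and $x\in[0,1]$.

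By \eqref{eq13_}, $S_{T,x}$ is a real symmetric $2\times 2$ matrix. Lemma \ref{l6} gives $\lambda_{\min}(S_{T,x})\geq\lambda_0>0$ for all $x\in[0,1]$ and all $T\geq T_0$. Hence $S_{T,x}$ is positive definite, in particular invertible, and the spectral theorem provides an orthonormal eigenbasis $u_1,u_2$ with eigenvalues $\lambda_1,\lambda_2\geq\lambda_0$.

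Now take an arbitrary $v\in\R^2$ and expand it as $v=c_1u_1+c_2u_2$. Then $S_{T,x}^{-1}v=c_1\lambda_1^{-1}u_1+c_2\lambda_2^{-1}u_2$. By orthonormality,
\begin{equation*}
\|S_{T,x}^{-1}v\|^2=\sum_{k=1}^2 c_k^2\lambda_k^{-2}\leq \lambda_0^{-2}\sum_{k=1}^2 c_k^2=\lambda_0^{-2}\|v\|^2 .
\end{equation*}
Equivalently, $\|S_{T,x}^{-1}\|_{2}=1/\lambda_{\min}(S_{T,x})\leq 1/\lambda_0$.

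Since $\lambda_0$ and $T_0$ from Lemma \ref{l6} do not depend on $x$, taking the supremum over $x\in[0,1]$ preserves the bound for every $T\geq T_0$. This gives the claim.

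There is no real obstacle here; all the substance lies in Lemma \ref{l6}. That lemma combines the uniform convergence $S_{T,x}\to S_x$ from Lemma \ref{l5} with positive definiteness of $S_x$. The only point to check is that its constants are uniform in $x$, and this is exactly what the $\min_x$ in \eqref{eqlmin} states. If the strict inequality used in the proof of Theorem \ref{teo4} is wanted, one can either replace $\lambda_0$ by a slightly smaller constant or note that the inequality is strict for $v\neq 0$ whenever $\lambda_{\min}>\lambda_0$. The non-strict version is the one stated here.
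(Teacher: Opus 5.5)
Your proof is correct and matches the paper's: both orthogonally diagonalize the symmetric matrix $S_{T,x}$, write $\|S_{T,x}^{-1}v\|^2$ as a sum of squared eigen-coordinates divided by squared eigenvalues, and bound it by $\lambda_0^{-2}\|v\|^2$ using the uniform eigenvalue bound of Lemma \ref{l6}. Your side remark on the strict inequality used in the proof of Theorem \ref{teo4} is also accurate: the lemma as stated only gives the non-strict bound.
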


    To prove the uniform convergence rates in Theorems \ref{teo2} and \ref{teo3}, two intermediate results are used.  The first provides a  tail probability bound for partial sums under $\alpha$-mixing dependence, while the second establishes a variance bound for blocks of kernel averages.

    We first state an exponential-type inequality for triangular arrays, based on Theorem 2.1 of \cite{liebscher}, which in turn follows from Theorem 5 in \cite{rio}.
	
	\begin{lem}[Liebscher-Rio]\label{l1}
		Let $\{Z_{i,T}\}$ be a zero-mean triangular array such that $\abs{Z_{i,T}}\leq b_T$, and let $\alpha_T$ denote its strong mixing coefficients. Then, for any $\epsilon>0$ and $1\leq m_T\leq T$ satisfying $4b_Tm_T<\epsilon$,
        \begin{equation*}
            P\parent[\Bigg]{\abs[\Bigg]{\sum_{i=1}^T Z_{i,T}}>\epsilon}\leq 4\exp\parent[\bigg]{-\frac{\epsilon^2}{64\sigma^2_{T, m_T}T/{ m_T}+\tfrac{8}{3}\epsilon b_T  m_T}}+4\alpha_T( m_T)\frac{T}{ m_T},
        \end{equation*}
		where $\sigma^2_{T, m_T}=\sup_{0\leq j \leq T-1}E[(\sum_{i=j+1}^{\min\{j+m_T,T\}} Z_{i,T})^2]$.
	\end{lem}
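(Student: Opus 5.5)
The Liebscher--Rio lemma (Lemma \ref{l1}) is a restatement of Theorem 2.1 of \cite{liebscher}, which itself rests on Theorem 5 of \cite{rio}. I would therefore prove it by reduction to Rio's Bernstein-type inequality for bounded strongly mixing sequences, using a blocking argument. I would not redo the coupling construction from scratch.

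\textbf{Blocking.} Fix $T$ and $m_T$, and set $k=\lceil T/m_T\rceil$. Partition $\{1,\dots,T\}$ into consecutive blocks $B_1,\dots,B_k$ of length $m_T$; the last block may be shorter. Define block sums $U_r=\sum_{i\in B_r}Z_{i,T}$ and split $\sum_i Z_{i,T}=S_{\mathrm{odd}}+S_{\mathrm{even}}$ according to the parity of $r$. Then
\begin{equation*}
P\Big(\Big|\sum_i Z_{i,T}\Big|>\epsilon\Big)\leq P(|S_{\mathrm{odd}}|>\epsilon/2)+P(|S_{\mathrm{even}}|>\epsilon/2).
\end{equation*}
Within each parity class the blocks are separated by gaps of at least $m_T$ indices. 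Each $U_r$ satisfies $|U_r|\leq m_Tb_T$ and $\E U_r^2\leq\sigma^2_{T,m_T}$, since $\sigma^2_{T,m_T}$ is exactly the supremum over starting points of the second moment of a window of length $m_T$.

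\textbf{Coupling.} By Berbee/Bradley-type coupling for $\alpha$-mixing (Bradley's lemma, as used by Rio), I would replace the $U_r$ of one parity class by independent copies $U_r^*$ with the same marginals. The replacement satisfies $P(|U_r-U_r^*|>\eta)$ controlled by $\alpha_T(m_T)$. Summing the coupling errors over the at most $T/m_T$ blocks in each parity class gives the remainder term $4\alpha_T(m_T)T/m_T$. The constant $4$ absorbs both parities and the factor arising in Bradley's lemma once $\eta$ is chosen proportional to $\epsilon$. The hypothesis $4b_Tm_T<\epsilon$ is what makes the shift by $\eta$ and the last incomplete block negligible relative to $\epsilon/2$.

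\textbf{Bernstein step.} For the independent bounded centred variables $U_r^*$, with at most $T/m_T$ terms, variance at most $\sigma^2_{T,m_T}$ each and bound $m_Tb_T$, the classical Bernstein inequality gives
\begin{equation*}
P\Big(\Big|\sum_r U_r^*\Big|>\epsilon/4\Big)\leq 2\exp\Big(-\frac{(\epsilon/4)^2}{2\,(T/m_T)\sigma^2_{T,m_T}+\tfrac{2}{3}(\epsilon/4)m_Tb_T}\Big).
\end{equation*}
Multiplying the denominator out yields the stated constants $64$ and $8/3$. Adding the two parities produces the factor $4$ in front of the exponential.

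\textbf{Main obstacle.} The hardest step is the coupling for a triangular array with nonstationary blocks. Here I would rely on the fact that Rio's and Liebscher's argument uses only the block-wise mixing coefficient $\alpha_T(m_T)$ as defined in Section \ref{sec2}, with the supremum over the starting index. That coefficient covers every gap between blocks uniformly, so stationarity is never needed.
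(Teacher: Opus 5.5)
\textbf{Gap in the coupling step.} The paper does not prove this lemma; it only cites Liebscher's Theorem 2.1, which follows from Rio's Theorem 5. Your outline is the mechanism behind that result: alternating blocks of length $m_T$, coupling of one parity class, and Bernstein for the coupled blocks. The blocking and Bernstein parts are fine. Your treatment of the short last block through the truncated windows in $\sigma^2_{T,m_T}$ is correct. Your Bernstein display actually yields $32(T/m_T)\sigma^2_{T,m_T}$, not $64$, which is harmless because it is stronger.

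The coupling step, however, would not produce the remainder $4\alpha_T(m_T)T/m_T$.
\begin{itemize}
\item Berbee's lemma is a $\beta$-mixing tool and is unavailable under $\alpha$-mixing.
\item Bradley's lemma gives per-block tail bounds $P(|U_r-U_r^*|>\eta)\le 18(\|U_r\|_\infty/\eta)^{1/2}\alpha_T(m_T)$. A union bound over the $N\asymp T/m_T$ blocks of one parity with $\eta\propto\epsilon$ only gives $|\sum_r(U_r-U_r^*)|\le N\eta$ on the good event. That is not $\le\epsilon/4$, since $N\to\infty$.
\item Forcing $\eta=\epsilon/(4N)$ turns the factor into $(4Nm_Tb_T/\epsilon)^{1/2}$, which your hypothesis only bounds by $N^{1/2}$. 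The guaranteed remainder is then of order $18N^{3/2}\alpha_T(m_T)$, not $4\alpha_T(m_T)T/m_T$.
\end{itemize}

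\textbf{What is missing.} You need an $L^1$ coupling (Rio's coupling lemma) together with a single Markov inequality on the aggregated error. Using auxiliary independent uniforms, construct successively $U_r^*$ with the law of $U_r$, independent of all earlier same-parity blocks and their copies. Because same-parity blocks are separated by at least $m_T$ indices, this gives
\[
\E|U_r-U_r^*|\le c\,\alpha_T(m_T)\,\|U_r\|_\infty\le c\,\alpha_T(m_T)\,m_Tb_T,
\]
where $c$ is a small absolute constant depending on how $\alpha$ is normalized. Then
\begin{equation*}
P\Big(\Big|\sum_r (U_r-U_r^*)\Big|>\frac{\epsilon}{4}\Big)\le \frac{4}{\epsilon}\sum_r \E|U_r-U_r^*|\le 4c\,N\,\alpha_T(m_T)\,\frac{m_Tb_T}{\epsilon}<c\,N\,\alpha_T(m_T).
\end{equation*}
This is where $4b_Tm_T<\epsilon$ is really used; it is Rio's condition $\lambda\ge qM$ with $\lambda=\epsilon/4$. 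It cancels the factor $m_Tb_T/\epsilon$ and leaves a remainder proportional to the number of blocks, $T/m_T$. It is not about making a shift $\eta$ or the incomplete last block negligible.

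If you only intend to cite Liebscher's Theorem 2.1, that is exactly what the paper does and it suffices. The remainder-term mechanism you sketch, however, does not establish the inequality.
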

    
	To bound the variance term $\sigma^2_{T, m_T}$ appearing in Lemma \ref{l1}, one must account for the possibly nonzero covariances of $\{Z_{t,T}\}$. The approach of \cite{hansen} and \cite{kristensen} bounds such covariances separately over short, medium, and long lags. In our fixed-design setting, however, such a partition is unnecessary and can be avoided altogether by exploiting the uniform integral approximation established in Lemma \ref{l2}.  We emphasize that throughout the proofs of
Theorems \ref{teo2}-\ref{teo3}, the variance term $\sigma^2_{T,m_T}$ appearing
in Lemma \ref{l1} will be understood as the block variance of the
array $\{Z_{t,T}\}$ restricted to an index set induced by the kernel. To be precise, for any  positive increasing sequence $L_T$, define for each $x\in[0,1]$ the index set
\begin{equation}
   J_{l}(L_T)\coloneqq\Big\{i\in\{1,\dots,T\}:\Big|\frac{i/T-x}{h}\Big|\le L_T\Big\},
\qquad n_T(x)\coloneqq \#J_{x}(L_T).
\end{equation}
Let $\{i_r(x)\}_{r=1}^{n_T(x)}$ be the increasing enumeration of $J_{x}(L_T)$. Given a triangular array $\{Y_{i,T}(\gamma):T\geq1, 1\leq i\leq T\}$, set
\begin{equation*}
    \widetilde Y_{r,T}(x,\gamma)\coloneqq Y_{i_r(x),T}(\gamma),\qquad r\in\{1,\dots,n_T(x)\},
    \end{equation*}
    and
    \begin{equation*}
    \sigma^2_{n_T(x), m_T}=\sup_{0\leq \ell\leq n_T(x)-1}\var\bigg(\sum_{r=\ell+1}^{\min\{\ell+m_T,n_T(x)\}}\widetilde Y_{r,T}(\gamma)K\bigg(\frac{i_r(x)/T-x}{h}\bigg) \bigg(\frac{i_r(x)/T-x}{h}\bigg)^j\bigg).
\end{equation*}
The quantity $\sigma^2_{n_T(x),m_T}$ defined above corresponds to the variance
term that enters the application of Lemma~\ref{l1} in the proofs of
Theorems \ref{teo2} and \ref{teo3}.

	
	\begin{lem}\label{teo1}
		Let $\{Y_{i,T}(\gamma):T\geq1, 1\leq i\leq T\}$ be an $\alpha$-mixing triangular array satisfying A.1 and the moment condition in A.3\eqref{eqda1}.  Assume $\beta>s/(s-2)$ and let $m_T$ be a positive sequence such that $m_T\leq n_T(x)$ for all $x\in[0,1]$.  Under A.2, for all sufficiently large $T$, we have 
        \begin{equation*}			
        \sup_{x\in[0,1]}\sigma^2_{n_T(x), m_T}\leq C(1+\|\gamma\|^\lambda)^{2/s}m_T.
		\end{equation*}
	\end{lem}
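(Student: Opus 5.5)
We expand the block variance as a double sum of covariances and bound it using the moment condition A.3\eqref{eqda1}, the boundedness of the kernel weights, and Davydov's covariance inequality under A.1. Fix $x\in[0,1]$, $\gamma$, and a block start $\ell$. Write $w_r(x)\coloneqq K\big((i_r(x)/T-x)/h\big)\big((i_r(x)/T-x)/h\big)^j$. Since $i_r(x)\in J_x(L_T)$ and $K$ vanishes outside $[-L_1,L_1]$, Lemma \ref{lext} gives $\sup_{r,x}|w_r(x)|\le C_j$, with $C_j$ independent of $x$ and $T$. Then
\begin{equation*}
\var\bigg(\sum_{r=\ell+1}^{\ell+m_T}\widetilde Y_{r,T}w_r(x)\bigg)\le C_j^2\sum_{r=\ell+1}^{\ell+m_T}\sum_{r'=\ell+1}^{\ell+m_T}\big|\mathrm{Cov}\big(Y_{i_r,T}(\gamma),Y_{i_{r'},T}(\gamma)\big)\big|.
\end{equation*}

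For the diagonal terms, Lyapunov's inequality and \eqref{eqda1} give $\var(Y_{i,T}(\gamma))\le (\E|Y_{i,T}(\gamma)|^s)^{2/s}\le C(1+\|\gamma\|^\lambda)^{2/s}$. Summed over the block, the diagonal therefore contributes at most $C(1+\|\gamma\|^\lambda)^{2/s}m_T$.

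For the off-diagonal terms, note that the enumeration $i_r(x)$ consists of consecutive integers, because $J_x(L_T)$ is an interval intersected with $[T]$. Hence $|i_r-i_{r'}|=|r-r'|\ge1$. Davydov's inequality for strongly mixing arrays with exponents $p=q=s$ gives
\begin{equation*}
|\mathrm{Cov}(Y_{i,T},Y_{i',T})|\le 8\,\alpha_{\gamma,T}(|i-i'|)^{1-2/s}\,\|Y_{i,T}\|_s\|Y_{i',T}\|_s\le C(1+\|\gamma\|^\lambda)^{2/s}A^{1-2/s}|i-i'|^{-\beta(1-2/s)}.
\end{equation*}
Summing over pairs inside a block of length $m_T$ yields at most $2m_T\sum_{k\ge1}k^{-\beta(s-2)/s}$. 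This series converges exactly when $\beta(s-2)/s>1$, that is, $\beta>s/(s-2)$, which is the hypothesis of the lemma. Every constant involved is independent of $x$, $\ell$ and $T$, so taking suprema over $\ell$ and then over $x$ gives the stated bound.

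The main point to verify is that the weights are bounded uniformly, so that no factor of $h$ is lost or needed. This is exactly why the bound comes out as $O(m_T)$ rather than the random-design order $O(m_Th)$, as Technical Remark (ii) explains. The condition $m_T\le n_T(x)$ guarantees that blocks are well defined. Truncated blocks at the end of $J_x(L_T)$ have length at most $m_T$, so the same argument covers them. If the $Y$'s were replaced by truncated or centred versions, as in the application, the same bound would hold, since truncation does not increase $s$-th moments and does not affect the mixing coefficients.
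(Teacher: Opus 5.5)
Your proposal is correct and follows essentially the same route as the paper's proof. Both arguments bound the kernel weights uniformly by $C_j$ via Lemma \ref{lext}, control the diagonal terms through the $s$-th moment bound, and control the off-diagonal terms by Davydov's inequality with exponent $(s-2)/s$, using $\beta(s-2)/s>1$ for summability. The only cosmetic difference is that you obtain the lag structure from the consecutiveness of $J_x(L_T)$, whereas the paper simply notes that the subarray inherits the mixing bound $\tilde\alpha_{\gamma,T}(k)\le\alpha_{\gamma,T}(k)$.
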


    \end{document}


\pagestyle{myheadings}
\markboth{}{Matsuoka and Torrent}

\thispagestyle{empty}
{\centering
\Large{\bf Supplementary material for ``Uniform convergence of kernel averages under fixed design with heterogeneous dependent data".} \vspace{.5cm}\\
\normalsize{ {\bf 
Danilo H. Matsuoka${}^{\mathrm{a}}$,\let\thefootnote\relax\footnote{\hskip-.3cm$\phantom{s}^\mathrm{a}$Research Group of Applied Microeconomics - Department of Economics, Federal University of Rio Grande.} Hudson da Silva Torrent${}^\mathrm{b}$\let\thefootnote\relax\footnote{\hskip-.3cm$\phantom{s}^\mathrm{b}$Mathematics and Statistics Institute - Universidade Federal do Rio Grande do Sul.
}
 \\
\let\thefootnote\relax\footnote{E-mails: danilomatsuoka@gmail.com (Matsuoka); hudsontorrent@gmail.com (Torrent)}
\vskip.3
cm}}
}
\section*{Introduction}

This supplementary material contains the proofs of auxiliary lemmas and technical results omitted from the main text for brevity. All references in square brackets (e.g., [Section 2], [Appendix B]) refer  to the corresponding parts of the main paper.

In what follows, we use the following notations: $[k] := \{1,2,\dots,k\}$ for any $k\in\N$, $\sup_x\coloneqq \sup_{x\in [0,1]}$, $\sup_\gamma\coloneqq \sup_{\gamma\in\Theta_T}$, $\sup_i\coloneqq\sup_{1\leq i\leq T}$, $\sup_T\coloneqq \sup_{T\geq 1}$, $\sup_{T,i}\coloneqq \sup_T\sup_i$  and $\sup_{x,\gamma}\coloneqq \sup_x\sup_\gamma$.
Throughout, $C>0$ denotes a generic constant that can take different values at different occurrences and is independent of $T, x$, and $\gamma$. The notation ``$\overset{a}{\approx}$'' denotes asymptotic equivalence.

\section*{Proofs of auxiliary results}

Throughout this document, references in square brackets (e.g., [Section 1], [Appendix B]) refer to the corresponding parts of the main paper.

\noindent\textbf{Proof of Lemma 1}.
	Define $h_{1,x}=\max\{0,x-L_Th\}$ and $h_{2,x}=\min\{1,x+L_Th\}$. Then $$J_x(L_T)=\{i\in[T]: i/T\in[x-L_Th,x+L_Th]\}=\{i\in[T]: i/T\in[h_{1,x},h_{2,x}]\},$$ and $k_{x,T}(L_T)=\# J_x(L_T)$ is the number of consecutive integers $i$ satisfying $h_{1,x}\leq i/T \leq h_{2,x}$. On one hand, $\ceil{Th_{1,x}}$ is the smallest integer $i$ such that $i\geq Th_{1,x}$. On the other, $\floor{Th_{2,x}}$ is the largest integer $i$ such that $i\leq Th_{2,x}$. Hence, $k_{x,T}(L_T)=\floor{Th_{2,x}}-\ceil{Th_{1,x}}+1$. Then
    \begin{align}\label{eqas1}
k_{x,T}(L_T)
&\leq (Th_{2,x}) - (Th_{1,x}) + 1
= T(h_{2,x}-h_{1,x}) + 1, \\\label{eqas2}
k_{x,T}(L_T)
&\geq (Th_{2,x} - 1) - (Th_{1,x} + 1) + 1
= T(h_{2,x}-h_{1,x}) - 1.
\end{align}
	Fix $x\in[0,1]$. If $x\in\{0,1\}$, then $h_{2,x}-h_{1,x}=L_T h$ for all $T$. If $x\in(0,1)$, then  $L_Th<\min\{x,1-x\}$ for all $T$ sufficiently large, which implies $h_{2,x}-h_{1,x}=(x+L_Th)-(x-L_Th)=2L_Th$.
    Dividing \eqref{eqas1}-\eqref{eqas2} by $TL_Th$ yields
    \begin{equation}\label{eqas3}
        \frac{h_{2,x}-h_{1,x}}{L_Th}-\frac{1}{TL_Th}\leq \frac{k_{x,T}(L_T)}{TL_Th}\leq \frac{h_{2,x}-h_{1,x}}{L_Th}+\frac{1}{TL_Th},
    \end{equation}
    for all $T$ large enough. Taking limits in \eqref{eqas3} gives $k_{x,T}(L_T)/(TL_Th)\to c_x$, $T\to\infty$, where $c_x=1+I(x\in(0,1))$, since $1/(TL_Th)=o(1)$. Finally, since $\sup_x\{h_{2,x}-h_{1,x}\}\leq 2L_Th$, inequality \eqref{eqas1} implies $\sup_x k_{x,T}(L_T)\leq 2T L_Th+1$.
    \qed

 \noindent\textbf{Proof of Lemma 3}. The proof of inequality (76) [Appendix B] follows from the same arguments as those for (75) [Appendix B], so we only prove the latter. Fix $0\leq a\leq b\leq 1$ and $x\in[0,1]$. Define $\zeta(u)\coloneqq f(u)u^{j}$ and $\ell_x(u)\coloneqq \zeta (\frac{u-x}{h})$, for all $u\in[0,1]$. With this notation, 
    \begin{equation*}
        \ell_x(i/T)=
f\bigg(\frac{i/T-x}{h}\bigg)
\bigg(\frac{i/T-x}{h}\bigg)^j.
    \end{equation*}
    Let $i_a\coloneqq \lceil aT\rceil$ and $i_b\coloneqq \lfloor bT\rfloor$. Then
\begin{equation*}
    \frac{1}{T}\sum_{i: i/T\in[a,b]}\ell_x(i/T)
= \frac{1}{T}\sum_{i=i_a}^{i_b}\ell_x(i/T)=\frac{1}{T}\ell_x(i_a/T)+\frac{1}{T}\sum_{i=i_a+1}^{i_b}\ell_x(i/T),
\end{equation*}
    and
    \begin{equation*}
\int_a^b \ell_x(u)du=
\int_{i_a/T}^{i_b/T}\ell_x(u)du
+\int_a^{i_a/T}\ell_x(u)du
+\int_{i_b/T}^{b}\ell_x(u)\,du.
    \end{equation*}
    Hence, by the triangle inequality,
    \begin{align*}
        \bigg|
\frac{1}{T}\sum_{i=i_a}^{i_b}\ell_x(i/T)-\int_a^b \ell_x(u)du
\bigg|&\leq \bigg|
\frac{1}{T}\sum_{i=i_a+1}^{i_b}\ell_x(i/T)-\int_{i_a/T}^{i_b/T}\ell_x(u)du\bigg|+\frac{1}{T}|\ell_x(i_a/T)|\\
&\qquad +\bigg|
\int_a^{i_a/T}\ell_x(u)du
\bigg|+\bigg|
\int_{i_b/T}^{b}\ell_x(u)\,du
\bigg|\\
&\coloneqq A_{x,T}+ B_{x,T}+ C_{x,T}+D_{x,T}.
    \end{align*}
In addition, from the definitions of $i_a$ and $i_b$, we must have
\begin{equation}\label{eql2}
    0 \leq \frac{i_a}{T} - a \leq \frac{1}{T}, 
\quad \text{ and } \quad
0 \leq b- \frac{i_b}{T} \leq \frac{1}{T}.
\end{equation}
    Since $K$ is Lipschitz on $[-L_1,L_1]$ and $g$ is Lipschitz on the range of $K$, the composition $f=g\circ K$ is Lipschitz on $[-L_1,L_1]$. Moreover,  because $K(u)=0$ for $|u|>L_1$ and $g(0)=0$, we have $f(u)=0$ for $|u|>L_1$. Hence,  $f$ has compact support, and thus is bounded and integrable on $\R$.  Therefore,  $f$ satisfies the conditions of Lemma 2. Applying this lemma yields $\sup_{u\in\R}|u|^j|f(u)|\leq C_j$, and so $|\ell_x(u)|\leq\sup_{u\in\R}|u|^j|f(u)|\leq C_j$. Therefore, using \eqref{eql2},
    \begin{align}\label{eqB}
    \sup_{x\in[0,1]} B_{x,T}&\leq \sup_{u\in\R} |\ell_x(u)|\frac{1}{T}\leq \frac{C_j}{T},\\\label{eqC}
        \sup_{x\in[0,1]} C_{x,T}&\leq \sup_{u\in\R} |\ell_x(u)|\int_{a}^{i_a/T}du\leq C_j\bigg(\frac{i_a}{T}-a\bigg)\leq \frac{C_j}{T},\\\label{eqD}
        \sup_{x\in[0,1]} D_{x,T}&\leq \sup_{u\in\R} |\ell_x(u)|\int_{i_b/T}^b du\leq C_j\bigg(b-\frac{i_b}{T}\bigg)\leq \frac{C_j}{T}.
    \end{align}
    Next, we focus on $A_{x,T}$. Since $u\mapsto u^{j}$ is continuously differentiable on $[-L_1,L_1]$, it is Lipschitz on this interval. Moreover, $f$ is bounded and Lipschitz on $[-L_1,L_1]$. Therefore, the product $\zeta(u)= f(u)u^{j}$, is Lipschitz on $[-L_1,L_1]$. Indeed,
    \begin{equation}\label{eqlips}
    |\zeta(u)-\zeta(v)|\leq |f(u)-f(v)||u|^j+|f(v)||u^j-v^j|\leq C|u-v|,\qquad u,v\in[-L_1,L_1]. 
     \end{equation}
     Thus, for all $u,v\in\R$ such that $|(u-x)/h|\leq L_1$ and $|(v-x)/h|\leq L_1$,
      \begin{equation} \label{eqlipsoi2}
    |\ell_x(u)-\ell_x(v)|=\bigg|\zeta\bigg(\frac{u-x}{h}\bigg)-\zeta\bigg(\frac{v-x}{h}\bigg)\bigg|\leq \frac{C}{h}|u-v|.
     \end{equation} 
     Because $f(u)=0$ for $|u|>L_1$, we also have $\zeta(u)=f(u)u^{j}=0$ for  $|u|>L_1$, so 
     \begin{equation}\label{eqzero}
         \ell_x(u)=\zeta((u-x)/h)=0,\ \text{ if }|(u-x)/h|>L_1.
     \end{equation}   
 Let $\underline J_{x,T}=\{i\in\{i_a+1,\dotsc,i_b\} :i/T \in C_x(L_1)\}$ with $C_x(L_1)$ as in equation (71) [Appendix B].  The sum appearing in $A_{x,T}$ over $\{i_a+1,\dots,i_b\}$ is first restricted to
$\underline J_{x,T}$, since $\ell_x(i/T)=0$ otherwise, according to \eqref{eqzero}. We partition $\underline J_{x,T}=J_{x,T}^*\cup \partial J_{x,T}^*$ where $J_{x,T}^*\coloneqq \{i\in\{i_a+1,\dotsc,i_b\} :[(i-1)/T,i/T] \subseteq C_x(L_1)\}$ and $\partial J_{x,T}^*\coloneqq \underline J_{x,T}\setminus J_{x,T}^*$. Since $J_{x,T}^*\subseteq J_x(L_1)=\{i\in[T:i/T\in C_x(L_1)]\}$, Lemma 1 implies that $\sup_{x\in[0,1]}\# J_{x,T}^*=O(Th)$. Note that $\partial J_{x,T}^*$ is empty if $\underline J_{x,T}=\Emptyset$. If  $\underline J_{x,T}\neq \Emptyset$ we have that  $\partial J_{x,T}^*$ is empty if and only if $\min \underline J_{x,T}/T=\min C_x(L_1)$. Otherwise, $\partial J_{x,T}^*=\{i_0\}$ and hence $\#\partial J_{x,T}^*=1$. Thus, $\sup_{x\in[0,1]}\#\partial J_{x,T}^*=1$.

By \eqref{eqlipsoi2}, $\ell_x(u)$ is Lipschitz on $u\in[(i-1)/T,i/T]$ if $i\in J_{x,T}^*$.  Thus, by the Mean Value Theorem for integrals, for each  $i\in J_{x,T}^*$, there exists $\xi_i \in ((i-1)/T,i/T)$ such that
\begin{equation*}
    \int_{(i-1)/T}^{i/T} \ell_x(u)\,du = \frac{1}{T} \ell_x(\xi_i).
\end{equation*}
Therefore, from the fact that $|i/T - \xi_i| \leq 1/T$,  we have for all $i\in J_{x,T}^*$
\begin{align*}
    \bigg|\frac{1}{T} \ell_x(i/T) - \int_{(i-1)/T}^{i/T} \ell_x(u) \,du \bigg|&= \frac{1}{T}\big| \ell_x(i/T) - \ell_x(\xi_i) \big|\leq  \frac{C}{Th} \big|\tfrac{i}{T}-\xi_i\big|\leq \frac{C}{T^2h}.
\end{align*}
In addition, since $\sup_{u\in\R}|\ell_x(u)|\leq C_j\leq C$, for all $i\in \partial J_{x,T}^*$
\begin{equation*}
    \bigg|\frac{1}{T} \ell_x(i/T) - \int_{(i-1)/T}^{i/T} \ell_x(u) du\bigg|\leq \bigg|\frac{1}{T} \ell_x(i/T)\bigg| + \bigg|\int_{(i-1)/T}^{i/T} \ell_x(u) du\bigg|\leq \frac{C}{T}.
\end{equation*}
 Thus, using the above results, it follows that for all sufficiently large $T$ 
    \begin{align}\notag
         A_{x,T}&\leq\sum_{i\in J_{x,T}^*}\bigg|\frac{1}{T} \ell_x(i/T) - \int_{(i-1)/T}^{i/T} \ell_x(u) du\bigg|+\sum_{i\in \partial J_{x,T}^*}\bigg|\frac{1}{T} \ell_x(i/T) - \int_{(i-1)/T}^{i/T} \ell_x(u) du\bigg|\\\label{eqA}
         &\leq \sup_{x\in[0,1]}\#J_{x,T}^*\frac{C}{T^2h}+\sup_{x\in[0,1]}\#\partial J_{x,T}^*\frac{C}{T} \leq \frac{C}{T}.
    \end{align}
    From the bounds \eqref{eqB}-\eqref{eqD} and \eqref{eqA}, we have   $\sup_{x\in[0,1]}\big|\tfrac{1}{T}\sum_{i:i/T\in[a,b]} \ell_x(i/T)-\int_a^b\ell_x(u)du\big|=O(1/T)$ as claimed.
\qed

\noindent\textbf{Proof of Lemma 4}. We begin showing that the function defined in (85) [Appendix B] satisfies the conditions on $K^*$. Define $K^*(x)=\Lambda_1 I(\abs{x}\leq 2L_1)$. 
Fix $\delta>0$ and let $x_1,x_2$ satisfy $\abs{x_1-x_2}\leq\delta\leq L_1$. 
Since $K$ is Lipschitz continuous, we have
\begin{equation*}
    \abs{K(x_1)-K(x_2)}\leq \Lambda_1\abs{x_1-x_2}
=\Lambda_1\abs{x_1-x_2}\{I(\abs{x_1}\leq 2L_1)+I(\abs{x_1}>2L_1)\}.
\end{equation*}
If $|x_1|>2L_1$ and $|x_1-x_2|\leq L_1$, then $|x_2|\geq |x_1|-|x_1-x_2|>2L_1-L_1=L_1$.
Since $K$ has compact support, $K(x_1)-K(x_2)=0$, so the term $I(\abs{x_1}>2L_1)$ contributes nothing. 
Hence,  it holds that
$\abs{K(x_1)-K(x_2)}\leq \delta K^*(x_1)$. Moreover,  
$\abs{K^*}\leq \Lambda_1$, and 
\(\int_{\R} \abs{K^*(u)}du\leq \Lambda_1(4L_1)\).

	We now turn to the integral approximation in (86) [Appendix B]. Let $ K^*(x)=\Lambda_1 I(\abs{x}\leq 2L_1)$. Then $K^*$ is bounded, integrable and Lipschitz on $[-2L_1,2L_1]$. Hence, the application of Lemma 3 gives the result. 
\qed

\noindent\textbf{Proof of Corollary 1}. Define, for each $x\in[0,1]$, $C_x^c(L_T)\coloneqq [0,1]\setminus C_x(L_T)$, where $C_x(L_T)$ is given in (71) [Appendix B]. Then
\begin{align*}
    u\in C_x(L_T)
\iff \Big|\frac{u-x}{h}\Big|\leq L_T,
\end{align*}
and therefore
\begin{equation}\label{eqdocoro2}
    I \Big(\Big|\frac{u-x}{h}\Big|>L_T\Big)=I \big(u\in C_x^c(L_T)\big), \qquad u\in[0,1].
\end{equation}
Let $\ell_x(u)\coloneqq f((u-x)/h)((u-x)/h)^j$.
Using \eqref{eqdocoro2}, we rewrite
\begin{align*}
\frac{1}{T}\sum_{i=1}^T f_{L_T}\bigg(\frac{i/T-x}{h}\bigg)\bigg(\frac{i/T-x}{h}\bigg)^j&= \frac{1}{T}\sum_{i=1}^T \ell_x(i/T) I\Big(\Big|\frac{i/T-x}{h}\Big|>L_T\Big)\\
&= \frac{1}{T}\sum_{i: i/T\in C_x^c(L_T)}\ell_x(i/T), \\[2mm]
\int_0^1 f_{L_T}\bigg(\frac{u-x}{h}\bigg)\bigg(\frac{u-x}{h}\bigg)^j du &=
\int_0^1 \ell_x(u) I\Big(\Big|\frac{u-x}{h}\Big|>L_T\Big) du\\
&= \int_{C_x^c(L_T)} \ell_x(u)du.
\end{align*}
Hence it suffices to bound $\sup_{x\in[0,1]}|\frac{1}{T}\sum_{i:\, i/T\in C_x^c(L_T)}\ell_x(i/T)
-\int_{C_x^c(L_T)} \ell_x(u)du|$.

Next, we characterize $C_x^c(L_T)$. From (71) [Appendix B], the complement $C_x^c(L_T)$
is the union of at most two subintervals of $[0,1]$. More precisely,
\begin{equation*}
C_x^c(L_T)=
\begin{cases}
(x+L_T h,1], & x\in[0,L_T h],\\
[0,x-L_T h) \cup (x+L_T h,\,1], & x\in(L_T h,1-L_T h),\\
[0,x-L_T h), & x\in[1-L_T h,1].
\end{cases}
\end{equation*}
Define  $I_{1,x,T}\coloneqq [0,\max\{0,x-L_T h\})$ and $I_{2,x,T}\coloneqq (\min\{1,x+L_T h\},1]$. Note that $[0,0)=\Emptyset$ and $(1,1]=\Emptyset$, so $I_{1,x,T}$ or $I_{2,x,T}$ may be empty when $x\in[0,L_T h]\cup[1-L_T h,1]$. Then $C_x^c(L_T)= I_{1,x,T}\cup I_{2,x,T}$. Hence, by the triangle inequality, 
\begin{align*}
\bigg|\frac{1}{T}\sum_{i: i/T\in C_x^c(L_T)}\ell_x(i/T)
-\int_{C_x^c(L_T)} \ell_x(u) du\bigg|
&\leq
\sum_{k=1}^2
\bigg|
\frac{1}{T}\sum_{i: i/T\in I_{k,x,T}}\ell_x(i/T)
-\int_{I_{k,x,T}} \ell_x(u) du
\bigg|.
\end{align*}
For each $k\in\{1,2\}$, if $I_{k,x,T}=\emptyset$ then the corresponding term is zero.
Otherwise, $I_{k,x,T}$ is a half-open interval contained in $[0,1]$. Replacing $I_{k,x,T}$ by its closure $\bar I_{k,x,T}$ adds at most one term to the finite sum, corresponding to a (possible) boundary point  $i/T\in\{x\pm L_Th\}$, and does not affect the integral. Therefore, since $\sup_{u\in\R}|\ell_x(u)|\leq C$, this replacement can generate only an error of order $O(1/T)$ uniformly in $x\in[0,1]$:
\begin{equation*}
   \bigg| \frac{1}{T}\sum_{i: i/T\in I_{k,x,T}}\ell_x(i/T)
-\frac{1}{T}\sum_{i: i/T\in \bar I_{k,x,T}}\ell_x(i/T)
\bigg|\leq\frac{1}{T}\sup_{u\in\R}|\ell_x(u)|\leq \frac{C}{T}.
\end{equation*}
Moreover, each $\bar I_{k,x,T}$ is of the form $[a,b]$ whenever $I_{k,x,T}\neq\emptyset$. It follows from Lemma 3 that
\begin{equation*}
   \sup_{x\in[0,1]} \bigg|
\frac{1}{T}\sum_{i: i/T\in I_{k,x,T}}\ell_x(i/T)
-\int_{I_{k,x,T}} \ell_x(u) du
\bigg|\leq \frac{C}{T}, \qquad k\in\{1,2\},
\end{equation*}
for all sufficiently large $T$. Hence
\begin{equation*}
    \sup_{x\in[0,1]}\bigg|\frac{1}{T}\sum_{i: i/T\in C_x^c(L_T)}\ell_x(i/T)
-\int_{C_x^c(L_T)} \ell_x(u) du\bigg|\leq \frac{C}{T},
\end{equation*}
which proves (88) [Appendix B]. The second inequality follows by the same argument, replacing $f$ with $f^*$ and
using Lemma 4, so is omitted.
\qed

\noindent\textbf{Proof of Lemma 5}. Apply Lemma 3 with $f(u)=K(u)u^j$ and $[a,b]=[0,1]$, together with the change of variables $w=(u-x)/h$ to obtain uniformly in $x\in[0,1]$
\begin{align*}
    s_{T,j}(x)&=	\int_0^1 K\bigg(\frac{u-x}{h}\bigg)\bigg(\frac{u-x}{h}\bigg )^jdu+O\bigg(\frac{1}{Th}\bigg)\\
    &=\int_{-x/h}^{(1-x)/h}K(w)w^jdw +O\bigg(\frac{1}{Th}\bigg).
\end{align*}
Since $K(w)=0$ for $|w|>1$, the last integral equals $\mu_j(x)$ for all sufficiently large $T$.
\qed

\noindent\textbf{Proof of Lemma 6}. For all nonzero vector $y=(y_1,y_2)'\in\mathbb{R}^2$, it holds that
\begin{equation}\label{eqpro}
    y^\intercal S_xy=\int_{G_x} y^\intercal\left[\begin{array}{cc} 1& u\\
		u&u^2
	\end{array}\right]y K(u)\,d\mu(u)=\int_{G_x} (y_1+y_2 u)^2 K(u)\,d\mu(u)\geq 0.
\end{equation}
Since $\mu(\{K>0\}\cap G_x)>0$ and $(y_1+y_2 u)^2=0$ at most at one point of $\{K>0\}\cap G_x$, there exists a subset of $G_x$ with positive measure on which $K(u)>0$ and $(y_1+y_2 u)^2>0$. Therefore, the integral in \eqref{eqpro} must be strictly positive, and $S_x$ is positive definite.

Next, we provide a uniform lower bound for $\lambda_{\min} (S_{T,x})$. Since $S_x$ is positive definite for each $x\in[0,1]$, both $\det(S_x)$ and $\tr(S_x)$ are strictly positive and continuous functions of $x$. As $[0,1]$ is compact, there exist 
\begin{equation*}
    \ubar d\coloneqq\min_{x\in[0,1]} \det(S_x)>0, 
\quad 
\bar t\coloneqq\max_{x\in[0,1]} \tr(S_x)<\infty \quad\text{and}\quad \ubar t\coloneqq\min_{x\in[0,1]} \tr(S_x).
\end{equation*}
By Lemma 5, $S_{T,x}\to S_x$ uniformly in $x\in[0,1]$. From the continuity of $\det(\cdot)$ and $\tr(\cdot)$, for any $\epsilon\in(0,1)$ 
there exists $T_0<\infty$ such that, for all $T\ge T_0$ and all $x\in[0,1]$,
\begin{equation}\label{eqdets}
    \det(S_{T,x})\geq (1-\epsilon)\ubar d,
\quad 
\tr(S_{T,x}) \leq (1+\epsilon)\bar t \quad\text{and}\quad \tr(S_{T,x}) \geq (1-\epsilon)\ubar t
\end{equation}
Since $S_{T,x}$ is a symmetric, its eigenvalues satisfy $\lambda_{\min}(S_{T,x})\lambda_{\max}(S_{T,x})=\det(S_{T,x})$ and $\lambda_{\min}(S_{T,x})+\lambda_{\max}(S_{T,x})=\tr(S_{T,x})$, where $\lambda_{\max}(S_{T,x})$ denotes the largest eigenvalue of $S_{T,x}$. From \eqref{eqdets}, $\det(S_{T,x})>0$ and $\tr(S_{T,x})>0$, and for all $T\ge T_0$ and all $x\in[0,1]$,
\begin{equation*}
    \lambda_{\min}(S_{T,x}) \geq \frac{\det(S_{T,x})}{\tr(S_{T,x})}\geq \frac{1-\epsilon}{1+\epsilon}\frac{\ubar d}{\bar t}\coloneqq\lambda_0>0.
\end{equation*}
Since $\lambda_0$ does not depend on $x$, the bound holds uniformly over $[0,1]$.
\qed

\noindent\textbf{Proof of Lemma 7}. By Lemma 6, there exists $T_0<\infty$ and $\lambda_0>0$ such that 
$\lambda_{\min}(S_{T,x})\geq\lambda_0$ for all $x\in[0,1]$ and all $T\geq T_0$. 
Since each $S_{T,x}$ is a real symmetric positive definite matrix, it admits an orthogonal diagonalization
$S_{T,x}=PDP^\intercal$, where $P=(p_1,p_2)$ is an orthogonal matrix whose columns are eigenvectors of $S_{T,x}$ and $D=\diag(\lambda_1,\lambda_2)$ contains its eigenvalues. 
Then, for any $v\in\mathbb{R}^2$,
\begin{equation*}
 	\|S_{T,x}^{-1}v\|^2
= v^\intercal(S_{T,x}^{-1})^2v
= (P^\intercal v)^\intercal(D^{-1})^2(P^\intercal v)
= \sum_{i=1}^2 \frac{(p_i^\intercal v)^2}{\lambda_i^2}
\leq \frac{1}{\lambda_0^2}\sum_{i=1}^2 (p_i^\intercal v)^2
= \frac{1}{\lambda_0^2}\|v\|^2,
\end{equation*}
where the inequality follows from $\lambda_i\ge\lambda_0$ for all $i$. 
Taking the supremum over $x\in[0,1]$ yields the desired result. \qed

 \textbf{Proof of Lemma 9.} Fix $x\in[0,1]$, $\gamma\in\Theta$ and $0\leq \ell\leq T-1$. Define $k_{r,T}(x)\coloneqq K((i_r(x)/T-x)/h)$ and $\pi_{r,j,T}(x)\coloneqq((i_r(x)/T-x)/h)^j$, and let
    \begin{equation*}
    S_{\ell,m_T}(x,\gamma)\coloneqq    \var\bigg(\sum_{r=\ell+1}^{\min\{\ell+m_T,n_T(x)\}}\widetilde Y_{r,T}(\gamma)\pi_{r,j,T}(x)k_{r,T}(x)\bigg).
    \end{equation*}
    If the original array  $\{Y_{i,T}(\gamma):T\geq1, 1\leq i\leq T\}$ satisfies Assumption A.1, then the subarray $\{\widetilde Y_{r,T}(\gamma):T\geq 1, 1\leq r\leq n_T(x)\}$ is also $\alpha$-mixing with mixing coefficients satisfying the same bounds, $\tilde \alpha_{\gamma,T}(k)\leq  \alpha_{\gamma,T}(k)\leq A k^{-\beta}$. 
    Moreover, under Assumption A.3(4), the uniform moment bound is inherited
    \begin{equation*}
        \sup_{T\geq 1}\,\sup_{1\leq r\leq n_T(x)}\E(| \widetilde Y_{r,T}(\gamma)|^s)\leq  \sup_{T\geq 1}\,\sup_{1\leq i\leq T}\E(| Y_{i,T}(\gamma)|^s)\leq \bar C_1(1+\|\gamma\|^\lambda).
    \end{equation*}
    Hence, Assumptions A.1 and A.3(4) continue to hold for the subarray $\{\widetilde Y_{r,T}(\gamma)\}$.  Therefore, using Davydov's \citep[Theorem 1.1 in][]{bosq} and H\"older's inequalities, and Lemma 2, we obtain that, for some constants $\beta,s>2$, 
	\begin{align}\notag
		 S_{\ell,m_T}&(x,\gamma)\leq \sum_{r,t=\ell+1}^{\min\{\ell+m_T,n_T(x)\}}\abs[\big]{ k_{r,T}(x)k_{t,T}(x)\pi_{r,j,T}(x)\pi_{t,j,T}(x)\cov(\widetilde Y_{r,T},\widetilde Y_{t,T})}\\\notag
         &\leq \Big(\sup_{u\in\R}|K(u)| |u|^j\Big)^2\sum_{r,t=\ell+1}^{\ell+m_T}\abs[\big]{\cov(\widetilde Y_{r,T},\widetilde Y_{t,T})}\\\notag
		&\leq C_j^2\bigg\{\sum_{r=\ell+1}^{\ell+m_T} \E(|\widetilde Y_{r,T}|^2) +\sum_{\substack{r,t=\ell+1\\ r\neq t}}^{\ell+m_T}\abs[\big]{\cov(\widetilde Y_{r,T},\widetilde Y_{t,T})}\bigg\}\\\notag
       & \leq C_j^2 \bigg\{\sum_{r=\ell+1}^{\ell+m_T} (\E|\widetilde Y_{r,T}|^s)^{2/s}+ \sum_{\substack{r,t=\ell+1\\ r\neq t}}^{\ell+m_T}C\alpha_T(\abs{r-t})^{(s-2)/s}(\E|\widetilde Y_{r,T}|^s)^{1/s}(\E|\widetilde Y_{t,T}|^s)^{1/s}\bigg\}\\\label{eqparci}
       &\leq C(1+\|\gamma\|^\lambda)^{2/s}\bigg\{m_T+ \sum_{r=\ell+1}^{\ell+m_T} \bigg(\sum_{\substack{t=\ell+1\\ t\neq r}}^{\ell+m_T} \abs{r-t}^{-\beta(s-2)/s}\bigg)\bigg\}.
	\end{align}
    Note that, for each $r\in\{\ell+1,\dotsc,\ell+m_T\}$, there exists $C_s>0$ depending only on $s$ such that
    \begin{align*}
        \sum_{\substack{t=\ell+1\\ t\neq r}}^{\ell+m_T} \abs{r-t}^{-\beta(s-2)/s}\leq 2\sum_{i=1}^{m_T}i^{-\beta(s-2)/s}\leq 2\sum_{i=1}^{\infty}i^{-\beta(s-2)/s}\leq  2C_s,
    \end{align*}
since $\beta(s-2)/s>1$.
 Thus, from \eqref{eqparci}, we conclude that $$\sup_{x}\sigma^2_{n_T(x),m_T}=\sup_x\sup_{0\leq \ell\leq n_T(x)-1} S_{\ell,m_T}(x,\gamma)\leq C(1+\|\gamma\|^\lambda)^{2/s}m_T,$$ as claimed.
	\qed

\FloatBarrier
\bibliographystyle{apalike}
\bibliography{bib}